\documentclass[pdflatex]{amsart}
\usepackage{graphicx} 
\usepackage{color}
\usepackage{amsmath,amssymb,amsthm,mathrsfs}
\usepackage{tikz}
\usepackage{tikz-cd}
\usepackage{url}
\usepackage{hyperref}

\newcommand{\Z}{\mathbb Z}
\newcommand{\Q}{\mathbb Q}
\def\P{\mathbb P}
\newcommand{\C}{\mathbb C}
\newcommand{\F}{\mathbb F}
\newcommand{\A}{\mathbb A}
\newcommand{\G}{\mathbb G}
\newcommand{\p}{\mathfrak p}
\def\q{\mathfrak q}
\newcommand{\lam}{\lambda}
\newcommand{\HK}{\mathrm{K3}^{[2]}}
\newcommand{\HKn}{\mathrm{K3}^{[n]}}
\newcommand{\Hdg}{\mathrm{Hdg}}
\newcommand{\crys}{\mathrm{crys}}
\newcommand{\be}{\beta}
\newcommand{\al}{\alpha}
\newcommand{\et}{\mathrm{\acute et}}

\newcommand{\inj}{\hookrightarrow}
\newcommand{\ab}{\mathrm{ab}}
\newcommand{\pr}{\mathrm{pr}}
\newcommand{\id}{\mathrm{id}}
\newcommand{\cyc}{\mathrm{cyc}}
\newcommand{\ur}{\mathrm{ur}}
\newcommand{\sep}{\mathrm{sep}}
\newcommand{\adele}{ad{\grave{e}}le}
\newcommand{\idele}{id{\grave{e}}le}
\newcommand{\etale}{{\acute{e}}tale}

\DeclareMathOperator{\Spec}{Spec}
\DeclareMathOperator{\disc}{disc}
\DeclareMathOperator{\Pic}{Pic}

\DeclareMathOperator{\cl}{cl}
\DeclareMathOperator{\CH}{CH}
\DeclareMathOperator{\MT}{MT}
\DeclareMathOperator{\Art}{Art}
\DeclareMathOperator{\Aut}{Aut}
\DeclareMathOperator{\Nm}{Nm}
\DeclareMathOperator{\GL}{GL}
\DeclareMathOperator{\Res}{Res}
\DeclareMathOperator{\End}{End}
\DeclareMathOperator{\rank}{rank}
\DeclareMathOperator{\ch}{ch}
\DeclareMathOperator{\Gal}{Gal}

\DeclareMathOperator{\Frac}{Frac}
\DeclareMathOperator{\Emb}{Emb}
\DeclareMathOperator{\Tr}{Tr}
\DeclareMathOperator{\Hom}{Hom}

\theoremstyle{plain}
 \newtheorem{thm}{Theorem}[section]
 \newtheorem{lem}[thm]{Lemma}
 \newtheorem{prop}[thm]{Proposition}
 \newtheorem{cor}[thm]{Corollary}
 
 \theoremstyle{definition}
 \newtheorem{rmk}[thm]{Remark}
 \newtheorem{dfn}[thm]{Definition}
 \newtheorem{exa}[thm]{Example}

\newcommand{\gmail}{\url{yamagishi.temma.m@gmail.com}}
\newcommand{\tusnummail}{\url{6125536@ed.tus.ac.jp}}
\newcommand{\tusaddressfull}{Department of Mathematics, Faculty of Science and Technology, Tokyo University of Science, 2641 Yamazaki, Noda, Chiba, 278-8510, Japan}

\title[On the Supersingular reduction of Cubic fourfolds with CM]{On the Supersingular reduction of Cubic fourfolds with Complex Multiplication}
\author{Temma Yamagishi}
\date{October 28,2026}
\address{\tusaddressfull}
\email{\gmail}
\email{\tusnummail}

\begin{document}
\begin{abstract}
 If $X$ is a K3 surface with complex multiplication by a CM field $E$, Kazuhiro Ito proved that the height of the reduction of $X$ in characteristic $p>0$, and its Artin invariant if it is supersingular, can be computed from the decomposition of $p$ in $E$ \cite[Theorem 1.1,1.2]{ItoK}. We present analogous results for cubic fourfolds by using the main theorem of complex multiplication for cubic fourfold, proved by Rikuto Ito \cite[Theorem 5.1]{ItoR}.
\end{abstract}

\maketitle

\tableofcontents

\section{Introduction}
\label{sec1}
\subsection{In the case of K3 surfaces} 
\label{sec1.1}
It is well known that the property ``complex multiplication'' (CM) is often useful when studying K3 surfaces. Let $S_{\C}$ be a smooth K3 surface over $\C$ which has a CM by $E$, i.e., $S_{\C}$ is a K3 surface whose endomorphisms preserve its $\Q$-Hodge structure as $1$-dim $E$-vector space. It is known that K3 surface over $\C$ has a model $S_K$ which is defined over number field $K \subset \C$. Fix a finite place $v$ of $K$ whose residue characteristic is $p>0$. If $S_{K}$ has a potential good reduction at $v$ and denote it by $\mathscr{S}_{\overline{v}}$ (this is K3 surface in positive characteristic), Kazuhiro Ito proved following two theorems \cite[Theorem 1.1, 1.2]{ItoK} using this property;

 Let $\q$ be a finite place of totally real number field $F \subset E$ below $v$.  
\begin{thm}\cite[Theorem 1.1]{ItoK} \label{1.1 }
\begin{enumerate}
\item[(1)] If $\q$ splits in $E$, the Picard number of $\mathscr{S}_{\overline{v}}$ is equal to $22-[E:\Q]$, and the height of the formal Brauer group of $\mathscr{S}_{\overline{v}}$ is $[E_{\p}:\Q_p]$.
\item[(2)] If $\q$ does not split in $E$
(i.e.\ $\q$ is either inert or ramified in $E$),
the Picard number of $\mathscr{S}_{\overline{v}}$ is $22$, and the height of the formal Brauer group of $\mathscr{S}_{\overline{v}}$ is $\infty$. In this case, \ $\mathscr{S}_{\overline{v}}$ is a supersingular $K3$ surface over $\overline{k(v)}$. 
\end{enumerate}
\end{thm}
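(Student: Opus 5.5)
The plan is to compute the crystalline Frobenius on $H^2_{\crys}(\mathscr{S}_{\overline{v}})$ through its $E$-linear structure, using the main theorem of complex multiplication for K3 surfaces (Rizov's version of Shimura--Taniyama). Let $T \subset H^2(S_{\C},\Q)$ be the transcendental lattice; by the CM assumption $T$ is a $1$-dimensional $E$-vector space, $E$ is a CM field with maximal totally real subfield $F$, and $\dim_{\Q} T = [E:\Q]$. After a finite extension of $K$ we may assume that $S_K$ has good reduction at $v$, that all of $\mathrm{NS}(S_{\overline{K}})$ is defined over $K$, and that $E \subset K$. The main theorem of CM then says that the Galois action on $T\otimes\Q_\ell$ factors through $E^\times\backslash\A_{E,f}^\times$. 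More precisely, for a uniformizer $\pi_v$ at $v$, the geometric Frobenius at $v$ acts on $T_\ell$ as multiplication by an element $\alpha = \Nm_{K/E}$-type reflex-norm image of $\pi_v$. By compatibility of $\ell$-adic and crystalline realizations (Katz--Messing, or an appropriate $p$-adic comparison), this $\alpha \in E$ is a Frobenius eigenvalue on the $[E:\Q]$-dimensional piece of $H^2_{\crys}$ corresponding to $T$. Since the reflex norm is attached to the CM type given by the Hodge decomposition, which has a single embedding $\sigma$ with $H^{2,0}$ in its eigenspace, $\alpha$ is, up to a root of unity, $\alpha = \Nm_{K_v/E}(\pi_v)\,\overline{\Nm_{K_v/E}(\pi_v)}^{-1}\cdot q$, with $\alpha\overline{\alpha}=q^2$.

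The next step is to read off Newton slopes from $p$-adic valuations of $\alpha$ at the primes of $E$ above $p$. The Frobenius on $T\otimes_{\Q}\Q_p \cong \prod_{\mathfrak P\mid p} E_{\mathfrak P}$ acts on the factor $E_{\mathfrak P}$ with slope $v_{\mathfrak P}(\alpha)/v_{\mathfrak P}(q)$, with multiplicity $[E_{\mathfrak P}:\Q_p]$. If $\q$ does not split in $E$, the unique prime $\mathfrak P$ over $\q$ is stable under complex conjugation, so $v_{\mathfrak P}(\alpha) = v_{\mathfrak P}(\overline\alpha)$, and $\alpha\overline\alpha = q^2$ forces slope $1$. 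If this holds for all $\q$ (or, more carefully, for the relevant $\q$ below $v$ through the fixed embedding), every slope on $T$ is $1$. Together with the algebraic part $\mathrm{NS}$, which is pure of slope $1$, all of $H^2$ then has slope $1$, so the surface is supersingular, of height $\infty$, and by the Tate conjecture for K3 surfaces it has Picard number $22$. In the split case $\q=\mathfrak P\overline{\mathfrak P}$, the CM type singles out $\mathfrak P$, namely the prime $\p$ determined by $\sigma$ and $v$. The Frobenius then has $v_{\mathfrak P}(\alpha)=0$ and $v_{\overline{\mathfrak P}}(\alpha)=2v(q)$ on those two factors, giving slopes $0$ and $2$ with multiplicity $[E_{\p}:\Q_p]$, and slope $1$ on the factors over the other primes of $F$. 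The slope-$<1$ part of $H^2$ then has dimension $[E_\p:\Q_p]$, so the formal Brauer group has height $[E_\p:\Q_p]$. For the Picard number, the Tate conjecture gives $\rho$ as the multiplicity of eigenvalues that are $q$ times roots of unity. I then have to check that $\alpha$ is not such a multiple, which is true because of the slope $0$ part, and that no conjugate of $\alpha$ is, so that $T$ contributes nothing and $\rho = 22-[E:\Q]$.

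The main obstacle is exactly this last point together with the slope bookkeeping in the split case. Note that (1) as stated only refers to $\q$ below $v$, yet it claims that all of $T$ remains transcendental, while primes $\q'\neq\q$ of $F$ over $p$ contribute slope $1$. The argument therefore has to go through $\Q$-irreducibility. The characteristic polynomial of Frobenius on $T$ is a power of the minimal polynomial of $\alpha$ over $\Q$, since $E=\Q(\alpha)$ up to replacing $\alpha$ by a power, which I would verify using that $\alpha$ generates $E$ because its Galois conjugates have distinct valuation patterns. Consequently, if one eigenvalue is not $q$ times a root of unity, none is, and so $T\otimes\Q_\ell$ contains no Tate classes. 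A second concern is justifying the crystalline comparison and the precise reflex norm formula; I would cite Rizov's CM theorem and Ito's consequence rather than reprove them.
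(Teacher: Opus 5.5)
Your strategy is the same as the paper's (Section~\ref{sec3}, following Ito). The main theorem of CM gives a global $\eta=\alpha/q\in E$ with $\eta c(\eta)=1$ through which Frobenius acts on $T$. Katz--Messing and the Tate conjecture then turn the $p$-adic valuations of its conjugates into slopes and the Picard number. Grouping the embeddings $E\to\overline{\Q}_p$ by primes of $E$, instead of going through $\Q(\eta)$ and Lemma~\ref{3.1}, is a harmless and slightly cleaner variant.

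However, the decisive split-case computation is wrong. You assert $v_{\mathfrak P}(\alpha)=0$ and $v_{\overline{\mathfrak P}}(\alpha)=2v(q)$, i.e.\ slopes $0$ and $2$ with multiplicity $[E_\p:\Q_p]$. For $[E_\p:\Q_p]>1$ this is impossible. The Newton polygon of $H^2_{\crys}$ lies above the Hodge polygon and $h^{0,2}=1$, so slope $0$ occurs at most once. Equivalently, $\widehat{\mathrm{Br}}$ is one-dimensional, so its slope is $1-1/h$. The claim also contradicts your own reflex-norm formula; it is the ordinary abelian-variety picture transplanted to a setting where the reflex norm is $y\mapsto c(y)y^{-1}$.

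The correct bookkeeping goes as follows.
\begin{itemize}
\item One has $\rho(\Art_K(x))=z\eta$, with $z$ supported at $\p,\overline{\p}$, $z_\p=\Nm_{K_v/E_\p}(\pi_v)^{-1}$ and $z_{\overline{\p}}=c(\Nm_{K_v/E_\p}(\pi_v))$.
\item The image of $\Gal(\overline{K_v}/K_v)$ is compact, so $z_{\p'}\eta$ is a unit at every $\p'\mid p$.
\item Hence $v_\p(\eta)=-v_{\overline{\p}}(\eta)=[k(v):k(\p)]$, while $v_\p(q)=[E_\p:\Q_p]\,[k(v):k(\p)]$.
\end{itemize}
So the slopes on the $\p$- and $\overline{\p}$-factors are $1\pm 1/[E_\p:\Q_p]$, not $2$ and $0$. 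Your conclusions survive this correction. For the height you only use that the slope-$<1$ part has dimension $[E_\p:\Q_p]$. For the Picard number you only need that $\alpha/q$ is not a root of unity because $v(\alpha)\neq v(q)$, not ``because of the slope $0$ part''.

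Two smaller points.

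\emph{Non-split case.} Drop the hedge ``if this holds for all $\q$''. The same compactness argument shows $\eta$ is a unit at every finite place: $z$ is trivial away from $\p$, and $z_\p=c(y)y^{-1}$ is a unit when $\p=\overline{\p}$. Also $|\tau(\eta)|=1$ for every complex embedding $\tau$, since $\eta c(\eta)=1$ and $E$ is CM. So $\eta$ is a root of unity by Kronecker (Lemma~\ref{3.1}(2)), every eigenvalue on $T$ is $q$ times a root of unity, and the Tate conjecture gives $\rho=22$ directly.

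\emph{The claim about $\Q(\alpha)$.} Your justification ``$E=\Q(\alpha)$ after replacing $\alpha$ by a power, since conjugates have distinct valuation patterns'' is unjustified and false in general. For $E/\Q$ Galois, any automorphism fixing $\p$ also fixes $\overline{\p}$ and hence the ideal of $\eta$, so it changes $\eta$ only by a root of unity. Raising to powers can only shrink $\Q(\eta)$. The claim is also unnecessary: multiplication by $\alpha$ on a one-dimensional $E$-vector space has characteristic polynomial $f(t)^{[E:\Q(\alpha)]}$, with $f$ the minimal polynomial of $\alpha$ over $\Q$. So all eigenvalues on $T$ are conjugates of $\alpha$, which is exactly how the paper argues.
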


\begin{thm}\cite[Theorem 1.2]{ItoK} \label{1.2}
Assume that $\q$ does not split in $E$, i.e.\ the $K3$ surface $\mathscr{S}_{\overline{v}}$ is supersingular. Moreover, we assume that both of the following conditions hold:
\begin{itemize}
\item The discriminant $\disc \Pic (S_{\C})$ of the lattice $\Pic (S_{\C})$ is not divisible by $p$.
\item The isomorphism $E(S_{\C}) \simeq E$ induces an isomorphism
\[
\mathrm{End}_{\Hdg}(T(S_{\C}))\otimes_{\Z}{\Z_{(p)}}\simeq \mathscr{O}_{E}\otimes_{\Z}{\Z_{(p)}},
\]
where $\mathscr{O}_{E}$ is the ring of integers of $E$.
\end{itemize}
Then the Artin invariant of $\mathscr{S}_{\overline{v}}$ is equal to $[k(\q):\F_{p}]$.
\end{thm}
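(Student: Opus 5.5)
The plan is to compute the Artin invariant through the Tate module of the supersingular crystal. Recall that for a supersingular K3 surface $\mathscr{S}_{\overline{v}}$ over $\overline{k(v)}$ the Néron–Severi lattice $N=\mathrm{NS}(\mathscr{S}_{\overline{v}})$ has discriminant $-p^{2\sigma_0}$, where $\sigma_0$ is the Artin invariant. Equivalently, $\sigma_0$ can be read off from the $F$-crystal: the Tate module $T=H^2_{\crys}(\mathscr{S}_{\overline{v}}/W)^{F=p}$ equals $N\otimes\Z_p$. Moreover, $2\sigma_0=\dim_{\F_p}$ of the discriminant group of $N\otimes\Z_p$, which is also $\dim$ of $(N\otimes \Z_p)^\vee/(N\otimes\Z_p)$. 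So I will compute the $p$-part of $\disc N$.

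First, using the specialization map $\Pic(S_{\C})\hookrightarrow N$ (after passing to a finite extension so that all of $\Pic$ is defined over $K$ with good reduction), decompose $N\otimes\Z_{(p)}$. Since $p\nmid\disc\Pic(S_{\C})$, the image of $\Pic(S_{\C})\otimes\Z_p$ is a unimodular orthogonal summand of $N\otimes\Z_p$. Hence the whole $p$-part of $\disc N$ comes from its orthogonal complement $M$. This $M$ has rank $[E:\Q]$ and sits inside the crystalline realization of the transcendental lattice.

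Next, I use CM to identify $M\otimes\Z_p$. Via the comparison isomorphisms (Berthelot–Ogus, or Kisin-style integral comparison, together with the Kuga–Satake or main CM theorem), $\mathscr{O}_E\otimes\Z_p=\End(T(S_\C))\otimes\Z_p$ acts on $T(S_\C)\otimes\Z_p$ and on the crystalline $H^2$, compatibly with Frobenius. Thus $M\otimes\Z_p$ is an $\mathscr{O}_E\otimes\Z_p$-lattice in a rank one $E\otimes\Q_p$-module. Its hermitian form is of the form $\Tr_{E/\Q}(\lambda x\bar y)$. Decompose over primes $\q\mid p$ of $F$: $\mathscr{O}_E\otimes\Z_p=\prod_{\q}\mathscr{O}_{E_{\q}}$.

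The key local step is as follows. The Frobenius acts on the $\q$-component $D_\q=\mathscr{O}_{E_\q}\otimes W$ through an element of $E_\q$. The slope condition (all slopes $1$) forces this element to have valuation one at each place. For the component above our $\q$ (non-split), the $F=p$ invariants give a lattice $\Lambda_\q\subset E_\q$ differing from the "naive" lattice by a factor. Concretely, the Hodge filtration sits in exactly one embedding. So $F/p$ on the $\q$-part equals $\pi\otimes\sigma$ twisted by the Hodge type, and the invariant lattice is $\mathscr{O}_{E_\q}$ scaled in such a way that the form $\Tr(\lambda x\bar y)$ restricted to it has discriminant group $\mathscr{O}_{E_\q}/\mathfrak{P}\cdots$ of $\F_p$-dimension $2f(\q)$, where $f(\q)=[k(\q):\F_p]$. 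Other primes $\q'\mid p$ contribute unimodular pieces, because there the crystal has no Hodge jump. So $2\sigma_0=2[k(\q):\F_p]$.

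I expect this local computation to be the main obstacle: pinning down exactly how Frobenius acts on the $\q$-component via the main theorem of CM (the reflex norm / Shimura–Taniyama formula), and checking that the resulting $\Z_p$-lattice has discriminant of length exactly $2f(\q)$ in both the inert and the ramified case. I would first treat the inert case explicitly, with $E_\q/F_\q$ unramified of degree 2 and Frobenius of the form $p\cdot u$ twisted. I would then redo the ramified case with a uniformizer $\varpi$, $\varpi\bar\varpi=p$ up to a unit.
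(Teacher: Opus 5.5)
Your frame matches Ito's, which the paper reproduces for cubic fourfolds in Sections~4--5. There $\sigma_0$ is read off from the discriminant of $H^2_{\crys}(\mathscr{S}_{\overline{v}}/W)^{\varphi=p}$, the $\Pic(S_{\C})\otimes\Z_p$ part splits off unimodularly, and the components at $\p'\neq\p$ contribute nothing. But you stop at the step that carries all the content, and your sketch of it would give the wrong answer.

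\textbf{The shape of Frobenius on the $\p$-part.} On $\mathscr{O}_{E_{\p}}\otimes_{\Z_p}W\simeq\prod_{i=0}^{d-1}W_i$, with $d=[E_{\p,0}:\Q_p]$, Frobenius is not given by an element of $E_{\q}$. It is determined by $\beta=\varphi(1)\in\prod_i W_i$, and the slope condition only controls the product of the $\beta_i/p$ around the cycle. If $\beta/p$ were a unit, as your ``valuation one'' suggests, this piece would be unit-root after twisting. Its $\varphi=p$ invariants would then span it over $W$, and it would contribute $0$.

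What you need is the precise integral shape: $\beta_1=p\pi_{\p}$, $\beta_{d/2+1}=p\pi_{\p}^{-1}$ (index mod $d$), and $\beta_i=p$ otherwise. This is the K3 version of Theorem~\ref{4.12}. It comes from two inputs:
\begin{enumerate}
\item[(i)] The main theorem of CM plus supersingularity of the reduction. These force the global unit $\eta$ to be $1$ after a finite extension. So $\Gal(\overline{K_v}/K_v)$ acts trivially on $\mathscr{O}_{E_{\p'}}$ for $\p'\neq\p$, and by $\chi_{\iota}^{-1}\chi_{\iota\circ c}$ on $\mathscr{O}_{E_{\p}}$ (cf.\ Proposition~\ref{4.7}).
\item[(ii)] Breuil--Kisin modules of Lubin--Tate characters (Proposition~\ref{4.10}) together with the integral comparison of Bhatt--Morrow--Scholze. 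This route, not Berthelot--Ogus, is also what transports the $\mathscr{O}_E\otimes\Z_p$-action to the crystal integrally.
\end{enumerate}
The $\varphi=p$ lattice is then carried onto $\mathscr{O}_{E_{\p}}\otimes W$ by multiplication by $\pi$, where $\pi_i=\pi_{\p}$ exactly for $1\le i\le d/2$ and $\pi_i=1$ otherwise. This gives length $d/2$. So it is not ``the Hodge filtration in one embedding'' that matters. Two embeddings, $\iota$ and $\iota\circ c$, carry the non-unit factors, and the answer is their distance around the cycle.

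\textbf{Unramifiedness.} That distance equals $d/2$ only because $c$ acts on $\Emb(E_{\p,0})$ as $\varphi^{d/2}$, i.e.\ because $E_{\p}/F_{\q}$ is unramified. This is a consequence of the hypotheses that your plan never extracts. By the K3 version of Lemma~\ref{4.2}, $T(S_{\C})\otimes\Z_{(p)}\simeq\mathscr{O}_E\otimes\Z_{(p)}$ with form $\Tr_{E/\Q}(\lambda x\,c(y))$ for some $\lambda\in F^\times$. The condition $p\nmid\disc\Pic(S_{\C})$ makes this form perfect on $\mathscr{O}_{E_{\p}}$. Hence $\mathcal{D}_{E_{\p}/\Q_p}$ is generated by $\lambda^{-1}\in F_{\q}$. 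For odd $p$, a parity count of $\nu_{\p}$ shows this is incompatible with $E_{\p}/F_{\q}$ being ramified (as in Proposition~\ref{4.3}).

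So your plan to redo a ``ramified case'' and obtain length $2f(\q)$ there is misdirected, because that case cannot occur under the hypotheses. Moreover, if $c$ were trivial on $E_{\p,0}$, the factors $\pi_{\p}$ and $\pi_{\p}^{-1}$ would fall into the same component and cancel, so there would be no contribution at all.
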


This means that, we can calculate the important invariants for K3 surface which can be obtained from mod $p$ reduction of K3 surface over $\C$ with CM.

On the other hand, cubic fourfold, which is defined by homogeneous polynomial of degree 3 in $\P^5$, shares some similarities (e.g., Hodge structures, complex multiplication, lattice theory, etc.) with K3 surface. Therefore, we present analogous results for cubic fourfolds.

\subsection{Main results}
\label{sec1.2}
Our main results are analogues of Kazuhiro Ito's theorems for cubic fourfold with CM. Let $X_{\C}$ be a smooth cubic fourfold over $\C$ with a CM field $E$, i.e., $X_{\C}$ is a cubic fourfold whose endomorphisms preserve its $\Q$-Hodge structure is 1-dim $E$-vector space. It is known that cubic fourfold defined over $\C$ with CM has a model $X_K$ defined over number field $K \subset \C$. Fix a finite place $v$ of $K$ whose residue characteristic is $p>0$. If $X_{K}$ has a potential good reduction at $v$ and we denote it by $\mathscr{X}_{\overline{v}}$ (note that we need to assume $\mathscr{X}_{\overline{v}}$ is cubic fourfold in positive characteristic, see Remark \ref{1.5}). we proved following two theorems like the case of K3 surface.

\begin{thm}\label{1.3}
We assume that $p \geq 3$. 
\begin{enumerate}
\item[(1)] If $\q$ splits in $E$, the rank of  $\mathscr{X}_{\overline{v}}$ is equal to $23-[E:\Q]$, and the height of $\mathscr{X}_{\overline{v}}$ is $[E_{\p}:\Q_p]$. Here, the ``rank'' means the algebraic part of $H^4_{\crys} (\mathscr{X}_{\overline{v}}/W)$, and ``height'' means the slope of the $F$-crystal, see section \ref{sec2.2}.
\item[(2)] If $\q$ does not split in $E$
(i.e.\ $\q$ is either inert or ramified in $E$),
the rank of $\mathscr{X}_{\overline{v}}$ is $23$, and the height of $\mathscr{X}_{\overline{v}}$ is $\infty$. In this case, \ $\mathscr{X}_{\overline{v}}$ is a supersingular cubic fourfold over $\overline{k(v)}$. 
\end{enumerate}
\end{thm}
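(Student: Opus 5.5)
The plan is to follow Kazuhiro Ito's strategy for K3 surfaces. The main theorem of complex multiplication for cubic fourfolds \cite[Theorem 5.1]{ItoR} replaces the K3 version. It will identify the geometric Frobenius at $v$, acting on the primitive transcendental cohomology, with an explicit element of $E^\times$ coming from the reflex norm. Let $T=T(X_{\C})_{\Q}$ be the transcendental part of $H^4_{\mathrm{prim}}(X_{\C},\Q)$. It is a $1$-dimensional $E$-vector space, and the algebraic part of $H^4(X_{\C},\Q)$ has rank $23-[E:\Q]$. After a finite extension of $K$ we may assume good reduction at $v$. We also assume that $\Gal(\overline K/K)$ acts trivially on the algebraic classes and acts on $T\otimes\Q_\ell$ through $E^\times$ via the CM character $\chi:\A_K^\times\to E^\times$. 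Since $\mathscr{X}_{\overline v}$ is a cubic fourfold, smooth and proper base change plus crystalline comparison let us compare $H^4_{\et}(\mathscr{X}_{\overline v},\Q_\ell)$ and $H^4_{\crys}(\mathscr{X}_{\overline v}/W)[1/p]$ with the generic fibre. Our working hypothesis is $p\ge3$, to ensure the integral comparison and the Katz--Messing/Tate-type input needed to compute the rank.

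Step 1. Write $\pi_v\in E^\times$ for the image under $\chi$ of a uniformizer at $v$. Then $\pi_v$ acts as the linearized Frobenius $F^{a}$, with $a=[k(v):\F_p]$, on $T\otimes_{\Q}K_0$, where $K_0=\Frac W(k(v))$. Because the Hodge structure on $T$ has type $h^{3,1}=h^{1,3}=1$ (after Tate twist, weight $2$ with $h^{2,0}=1$), the reflex-norm formula gives $\pi_v\bar\pi_v=q^{2}$, with $q=|k(v)|$ and the twist appropriate to $H^4$, and $\pi_v$ is a Weil number. Its $p$-adic valuations at the places of $E$ above $p$ are determined by the CM type. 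Concretely, one distinguished embedding $\sigma$ gives the $(3,1)$-part and $\bar\sigma$ gives the $(1,3)$-part. Hence $\pi_v$ has valuation $1$ resp.\ $3$ (normalized by $a$) at the place $\p$ resp.\ $\bar\p$ corresponding to $\sigma$ resp.\ $\bar\sigma$, and valuation $2$ at all other places.

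Step 2 is the slope computation. Decompose $T\otimes\Q_p=\bigoplus_{\mathfrak P\mid p}E_{\mathfrak P}$. If $\q$ splits in $E$ as $\p\bar\p$ with $\p\ne\bar\p$, the slopes $1$ and $3$ each occur with multiplicity $[E_\p:\Q_p]$, and all remaining slopes equal $2$. So the $F$-crystal $H^4$ has height $[E_\p:\Q_p]$ in the sense of \S\ref{sec2.2}. If $\q$ is inert or ramified, then $\p=\bar\p$ and the valuation of $\pi_v$ at $\p$ averages the contributions $1$ and $3$ to $2$. Hence everything is slope $2$ and the height is $\infty$.

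Step 3 is the rank. The algebraic part contributes $23-[E:\Q]$ slope-$2$ Tate classes, which are algebraic because they are specializations of algebraic classes. In the split case we must show that no further classes become algebraic. Every Tate class is fixed by $\pi_v/q^2$, and we claim that $\pi_v/q^2$ is not a root of unity in any factor $E_{\mathfrak P}$ for $\mathfrak P$ over $\q$. Since $E$ is a field and $T$ is $1$-dimensional over $E$, any Frobenius-invariant vector would force $\pi_v^n=q^{2n}$ in $E$, which contradicts the valuation $1\neq2$ at $\p$. So the rank is exactly $23-[E:\Q]$; here we use that the crystalline rank is bounded by the Tate classes. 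In the nonsplit case, all of $T$ has slope $2$. After a further finite extension, $\pi_v/q^2\in E$ has absolute value $1$ at all archimedean places and is a unit at all finite places, because it has valuation $2-2=0$ above $p$ and is a unit away from $p$. Therefore $\pi_v/q^2$ is a root of unity, and replacing $K$ by an extension makes Frobenius act as $q^2$ on all of $H^4$. To obtain rank $23$ we then need these Tate classes to be algebraic in the crystalline sense of \S\ref{sec2.2}, i.e.\ supersingularity of the $F$-crystal. If \S\ref{sec2.2} defines rank via Tate classes, this is immediate; otherwise it requires the Tate conjecture for cubic fourfolds in characteristic $p\ge3$. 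That is known via the Kuga--Satake construction or the Fano variety of lines, whose Tate conjecture follows from K3-type results, and this is where $p\ge3$ enters.

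The main obstacle is Step 1: extracting from \cite[Theorem 5.1]{ItoR} the precise $p$-adic valuations of $\pi_v$ at the primes above $p$ in terms of the CM type. This needs the compatibility of the reflex norm with the Hodge filtration on $H^4_{\crys}\otimes K$ via Fontaine's comparison, i.e.\ a Shimura--Taniyama formula. I would first try to transport Kazuhiro Ito's argument through the Kuga--Satake abelian variety attached to the weight-$2$ twist of $T$.
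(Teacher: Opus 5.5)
Your overall route matches the paper's. The CM reciprocity law makes the Frobenius at $v$ an element of $E$, the $p$-adic valuations of its conjugates give the Newton polygon, and the Tate conjecture (Proposition \ref{2.16}) turns root-of-unity eigenvalues into rank. However, Step 1 is wrong, and with it the split case of Step 2. Fix $\overline{\Q}\hookrightarrow\overline{\Q}_p$. All $h:=[E_\p:\Q_p]$ embeddings $\tau$ of $E$ inducing $\p$ give conjugates $\tau(\pi_v)$ of the \emph{same} valuation. That common valuation is the average of the Hodge contributions over those embeddings, which is what a Shimura--Taniyama formula says. So when $\q$ splits, the normalized valuation is $2-1/h$ at $\p$ and $2+1/h$ at $\bar{\p}$ (up to your sign convention), not $1$ and $3$. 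The polygon you obtain, with slopes $1$ and $3$ each of multiplicity $h$, is not the height-$h$ polygon of Definition \ref{2.14} (slopes $2\pm 1/h$). For $h>1$ it even lies below the Hodge polygon, since $h^{3,1}=h^{1,3}=1$, contradicting Mazur's inequality. So ``height $=[E_\p:\Q_p]$'' does not follow from your computation. Note that in the non-split case you do average $1$ and $3$ to $2$, so the two cases are treated inconsistently.

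The valuations you single out as the main obstacle need no Fontaine comparison or Kuga--Satake detour; the paper reads them off Theorem \ref{2.9}. Let $x$ be the id\`ele equal to a uniformizer $\pi_v$ at $v$ and $1$ elsewhere, and put $N=\Nm_{K_v/E_\p}(\pi_v)$. Then $\rho(\Art_K(x))=z\eta$ with $\eta\in G(\Q)$ and $z$ supported at $\p,\bar{\p}$. If $\q$ splits, $z_\p=N^{-1}$ and $z_{\bar{\p}}=c(N)$; if not, $z_\p=c(N)N^{-1}$, which is a unit. Since $\rho(\Art_K(x))$ preserves the lattice $T(X_\C)\otimes\widehat{\Z}$, all its local components are units. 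Hence in the split case $\mathrm{ord}_\p(\eta)=f(v/\p)=-\mathrm{ord}_{\bar{\p}}(\eta)$, where $f(v/\p)$ is the residue degree of $K_v/E_\p$. In the non-split case $\eta$ is a unit at every finite place with $\eta c(\eta)=1$, hence a root of unity. Since $\eta$ is the Frobenius on $T(X_\C)\otimes\Q_\ell(2)$ for $\ell\neq p$, we get $\nu(\tau(\eta))/a=\pm 1/[E_\p:\Q_p]$ for $\tau$ inducing $\p$ resp.\ $\bar{\p}$, and $0$ otherwise, with $a=[k(v):\F_p]$. This is exactly the height-$[E_\p:\Q_p]$ polygon; it is the content of Lemma \ref{3.1} (Ito's Lemma 4.1) combined with Proposition \ref{2.16}. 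With this correction your Step 3 goes through unchanged, since $2-1/h\neq 2$ still rules out extra Tate classes.
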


\begin{thm}\label{1.4}
Assume that $p \geq 7$ and $\q$ does not split in $E$, i.e.\ the cubic fourfold $\mathscr{X}_{\overline{v}}$ is supersingular. Moreover, we assume that both of the following conditions hold:
\begin{itemize}
\item The discriminant $\disc \CH^2 (X_{\C})$ of the lattice $\CH^2 (X_{\C})$ is not divisible by $p$.
\item The isomorphism $E(X_{\C}) \simeq E$ induces an isomorphism
\[
\mathrm{End}_{\mathrm{Hdg}}(T(X_{\C}))\otimes_{\Z}{\Z_{(p)}}\simeq \mathscr{O}_{E}\otimes_{\Z}{\Z_{(p)}},
\]
where $\mathscr{O}_{E}$ is the ring of integers of $E$.
\end{itemize}
Then the Artin invariant of $\mathscr{X}_{\overline{v}}$ is equal to $[k(\q):\F_{p}]$.
\end{thm}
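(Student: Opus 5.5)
We follow Kazuhiro Ito's strategy for Theorem \ref{1.2}. The idea is to compute the Artin invariant of $\mathscr{X}_{\overline{v}}$ through the discriminant of the lattice of algebraic classes in $H^4_{\crys}(\mathscr{X}_{\overline{v}}/W)$. By Theorem \ref{1.3}(2) this lattice has rank $23$. For a supersingular cubic fourfold the algebraic lattice $N$ (suitably normalized, for instance the primitive part or the part orthogonal to $h^2$) has discriminant group $N^\vee/N$ which is $p$-elementary of length $2\sigma$ up to prime-to-$p$ factors. Here $\sigma$ is the Artin invariant, characterized crystalline-ly by $\dim_k (N\otimes k \cap \ker) $, equivalently by $\mathrm{length}_W(H^4_{\crys}/N\otimes W)$. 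So it suffices to show that the $p$-part of $N^\vee/N$ has length $2[k(\q):\F_p]$. The restriction $p\ge 7$ is used at this step: it guarantees the Tate conjecture and the crystalline description for supersingular cubic fourfolds in the needed generality, for example via the Kuga--Satake or Fano variety of lines construction.

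The first step is to decompose the specialization. Specialization gives an injection $\CH^2(X_{\C}) \hookrightarrow N$. The first assumption, that $p\nmid \disc\CH^2(X_{\C})$, implies that this sublattice splits off $p$-adically: $N\otimes\Z_p \simeq (\CH^2(X_\C)\otimes \Z_p)\oplus M$. Here $M$ is the orthogonal complement, a lattice of rank $[E:\Q]$ whose rational span is the reduction of the transcendental part $T(X_{\C})_{\Q}$. So the $p$-part of the discriminant comes entirely from $M\otimes\Z_p$.

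The second step identifies $M$. By the main theorem of CM for cubic fourfolds (Rikuto Ito, \cite[Theorem 5.1]{ItoR}), a suitable power of Frobenius acts on $T(X_\C)\otimes\Q_\ell$ through an element $\pi\in E^\times$, with $\pi\bar\pi = p^{r}$. The second assumption lets us regard $T_p:=T(X_{\C})\otimes\Z_p$ as a free module of rank one over $\mathscr{O}_E\otimes\Z_p=\prod_{\p}\mathscr{O}_{E_\p}$. Comparison (Berthelot--Ogus, or integral $p$-adic Hodge theory, with $p\ge 7$ to stay in the Fontaine--Laffaille range) identifies $T_p\otimes W$ with the transcendental crystal $T_{\crys}$, with Frobenius given by $\pi$ up to twist. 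The algebraic classes $M\otimes\Z_p$ are then the Frobenius-invariants after twisting; these are the Tate classes. Since $\q$ is nonsplit, $E_\p/F_\q$ is a quadratic field or ramified extension, and $\pi/p^{r/2}$ is a unit of norm one to $F_\q$.

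The third step is a local computation, and it is the main obstacle. We must compute the index of $M\otimes\Z_p$ inside its dual, that is, compare the lattice of Frobenius-fixed vectors with $T_{\crys}$ using the trace form $\mathrm{Tr}_{E/\Q}(x\bar y/\delta)$. We would reproduce Ito's explicit computation. The crystal $\mathscr{O}_{E_\p}\otimes_{\Z_p}W$ splits over the embeddings of $F_\q$, and there are $f=[k(\q):\F_p]$ residue embeddings. The Hodge filtration, a single class of type $(3,1)$ corresponding to one CM type embedding, forces the Frobenius-stable lattice to differ from $T_{\crys}$ by exactly one step of length in each of $2f$ graded pieces. This gives $\mathrm{length}\,(M^\vee/M)\otimes\Z_p = 2f$, hence $\sigma=f$. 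The delicate points are handling the ramified case separately, with a uniformizer of $E_\p$ in place of $p$, and checking that the Tate twist for weight $4$ does not shift the count. The latter is where one uses that the Hodge numbers $(1,20,1)$ of $T$ mirror the K3 case.
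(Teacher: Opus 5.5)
Your outer framework matches the paper: the Artin invariant as a colength or discriminant, splitting off $\CH^2(X_{\C})\otimes\Z_p$ using $p\nmid\disc$, and $T(X_{\C})\otimes\Z_p$ free of rank one over $\mathscr{O}_E\otimes\Z_p$. There is, however, a genuine gap in Step 2, and Step 3 inherits it. Knowing that a power of Frobenius acts on $T$ through some $\pi\in E^\times$ determines only the isocrystal. The Artin invariant measures the position of an integral lattice, which this data cannot see. In fact the data points the wrong way. In the supersingular case the twisted eigenvalue is a root of unity (Lemma \ref{3.1}(2)), and it becomes trivial after a finite extension of $K$ (the step $\eta=1$ in Proposition \ref{4.7}). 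So if you literally give $T(X_{\C})\otimes_{\Z}W$ the Frobenius $p^2\cdot(\text{unit})\otimes\sigma$, the twisted crystal is unit-root. It is then spanned over $W$ by its $\varphi=p^2$-invariants, and you get Artin invariant $0$. Concretely, in the notation of Theorem \ref{4.12}, take $\beta=(p^2,\dots,p^2)$ and the true $\beta$ (with $p^2\pi_{\p}$ at $i=1$ and $p^2\pi_{\p}^{-1}$ at $i=d'$). Both satisfy $\prod_{j=0}^{d-1}\varphi^j(\beta)=p^{2d}$. Yet the cokernels of $(\,\cdot\,)^{\varphi=p^2}\otimes_{\Z_p}W\to(\,\cdot\,)$ have lengths $0$ and $d/2$ respectively. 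Your Step 3 asserts that the Hodge filtration ``forces'' the right shift. That is the whole content of the theorem, and it does not follow from what you set up.

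The missing input is the local Galois module at $p$, together with an integral comparison valid over a ramified base. The paper restricts the reciprocity law of Theorem \ref{2.9} to $K_v^\times$. This shows that $\Gal(\overline{K_v}/K_v)$ acts on $T(X_{\C})\otimes\Z_p\simeq\prod_{\p'\mid p}\mathscr{O}_{E_{\p'}}$ trivially on the factors $\p'\neq\p$, and through $\chi_{\iota}^{-1}\chi_{\iota\circ c}$ on the $\p$-factor (Proposition \ref{4.7}). This crystalline character, with nonzero Hodge--Tate weights at $\iota$ and $\iota\circ c$, is the precise form of your ``single $(3,1)$ class''. Its Breuil--Kisin module is computed from the Lubin--Tate case (Propositions \ref{4.10} and \ref{4.11}), and the Bhatt--Morrow--Scholze comparison (Theorem \ref{4.9}) then gives Theorem \ref{4.12}. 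Fontaine--Laffaille theory cannot replace this step. It needs an absolutely unramified base, whereas $K_v\supseteq E_{\p}$, further enlarged to acquire good reduction, is in general ramified over $\Q_p$.

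You also miss Proposition \ref{4.3}. Since $H^4$ is unimodular and $p\nmid\disc\CH^2(X_{\C})$, the form $\Tr_{E_{\p}/\Q_p}(d\,x\,c(y))$ is perfect on $\mathscr{O}_{E_{\p}}$. A parity argument on the different (using $p>2$) then shows that $E_{\p}/F_{\q}$ is unramified. So there is no ramified case to treat separately. Moreover, unramifiedness is exactly what makes $c$ nontrivial on $E_{\p,0}$. As a result, $\iota$ and $\iota\circ c$ put $\pi_{\p}$ and $\pi_{\p}^{-1}$ into different components among the $d=2[k(\q):\F_p]$ embeddings of $E_{\p,0}$ (not of $F_{\q}$). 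That is where the count $d/2=[k(\q):\F_p]$ comes from.
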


The differences from \cite{ItoK} are the condition on the \emph{rank}, which corresponds to the Picard number of K3 surface, and the characteristic of $\overline{k(v)}$ satisfying $\ch \overline{k(v)}  = p \geq 3$ (resp. $p \geq 7$). Except for this differences, these can be proved using a similar approach to \cite{ItoK}. 

\begin{rmk}\label{1.5}
For a projective K3 surface $S_{K}$ over a number field $K \subset \C$ such that $S_{\C} := S_K \otimes_K \C$ has CM, it is conjectured that $S_{K_v}$ has potential good reduction at every finite place $v$ of $K$. Matsumoto proved that $S_{K_v}$ has potential good reduction when $S_{K_v}$ satisfies some conditions \cite{Matsumoto}.

However, for a cubic fourfold $X_{K}$, it is not clear what the necessary conditions for this are. Therefore, we assume that $X_{K_v}$ has potential good reduction and $\mathscr{X}_{\overline{v}}$ becomes cubic fourfold over $\overline{k(v)}$. 
\end{rmk}

\begin{rmk}\label{1.6}
Yang also stated in \cite[Remark (5.2.9)]{Yang} that, using explicit formulas of cubic fourfolds and their transcendental lattices by  \cite[Theorem 1.8]{LazaZheng}, we can compute the range of primes for cubic fourfolds which have supersingular reduction (with $\sigma_0 = 1$). Therefore, if the cubic fourfold with complex multiplication exists, this conjecture can be proven (without needing to assume that $\sigma_0 = 1$).
\end{rmk}

\subsection*{Notations}
\label{sec1.3}
In this paper, the term ``cubic fourfold'' (resp. ``K3 surface'') means only smooth one (resp. smooth projective) . Let $K$ be a field and we denote cubic fourfold (resp. K3 surface) over $K$ by $X_K$ (resp. $S_{K}$).  Let $F(X_K)$ be a Fano variety of lines which is $\HK$-type variety. Here $\HKn$-type variety means in the sense of \cite{Yang}. We define $\CH^i (X_K)$ to be the (numerical) Chow group of codimension $i$ cycles on $X_K$. 

If $K$ is a field , then $\overline{K}$ is algebraic closure of $K$. We denote the ring of Witt vectors of $K$ by $W:=W(K)$.  Let $p$ be a prime number and $\ch K = p$ , and we also assume $p \geq 7$ if cubic fourfold $X_{K}$ is defined over $K$ to make $H^i_{\crys} (F(X_K)/W)$ torsion free.

Let $F$ be a totally real number field, and $E$ be a totally imaginary quadratic extension of $F$ (i.e., $[E:F] = 2$). Fix a prime number $p$ and finite place $v$ of number field $K$ whose residue characteristic is $p$. Let
\begin{itemize}
\item $K_v$ be the completion of $K$ at $v$. 
\item $\mathscr{O}_{K_v}$ be the valuation ring of $K_v$. 
\item $k(v)$ be the residue field of $K_v$.
\end{itemize}
Moreover, consider the following extension
\[
\Q \subset F \subset E \subset K \subset \C.
\]
And let $\p$ be the finite place of $E$ and $\q$ be the finite place of $F$ below $v$.
We denote $E_{\p},\mathscr{O}_{E_{\p}},k(\p)$ and $F_{\q},\mathscr{O}_{E_{\q}},k(\q)$ by above meaning.

The term ``$X_{K}$ (resp. $S_{K}$) has potential good reduction at $v$'' means, after replacing $K_v$ by
a finite extension of it, there exists a proper smooth algebraic space $\mathscr{X}$ (resp. $\mathscr{S}$) over $\Spec \mathscr{O}_{K_v}$ whose generic fiber $\mathscr{X} \otimes_{\mathscr{O}_{K_v}} K_v$ (resp. $\mathscr{S} \otimes_{\mathscr{O}_{K_v}} K_v$) is isomorphic to $X_{K_v} := X_K \otimes_K K_v $ (resp. $S_{K_v} := S_K \otimes_K K_v $). In this paper, we assume that there exists such an algebraic space $\mathscr{X}$ (resp. $\mathscr{S}$) over $\Spec_{\mathscr{O}_{K_v}}$, and study the geometric special fiber of $\mathscr{X}$ (resp. $\mathscr{S}$), which is denoted by
\[
\mathscr{X}_{\overline{v}} := \mathscr{X} \otimes_{\mathscr{O}_{K_v}} \overline{k(v)}, \ \ \ \ (\mathrm{resp}.\ \mathscr{S}_{\overline{v}} := \mathscr{S} \otimes_{\mathscr{O}_{K_v}} \overline{k(v)}).
\]

Note that $\mathscr{X} \to \Spec \mathscr{O}_{K_v}$ automatically becomes projective because there exists an ample line bundle $\omega^{-1} \simeq \mathcal{O}(3)$ on  $\mathscr{X}_{\overline{v}}$. Therefore, we may assume that $\mathscr{X}$ is the scheme.

\subsection*{Acknowledgments}
\label{sec1.4}
The author would like to express his sincere gratitude to his advisor, Yuya Matsumoto, for his guidance and support throughout this work. The author is grateful to Rikuto Ito for the discussions on cubic fourfold and related topics. The author also thanks Kazuhiro Ito, who wrote the paper \cite{ItoK} that served as the inspiration for this work. In addition, his comments and suggestions are very useful for the author. 

\section{Preliminaries 1}

\label{sec2}
\subsection{CM for K3 surfaces and cubic fourfolds}
\label{sec2.1}
 We first study the complex multiplication in the case of K3 surface $S_{\C}$. Let $\Pic (S_{\C})$ be a Picard group of K3 surface $S_{\C}$ over $\C$. Consider the cycle map 
\[
\cl:  \Pic (S_{\C}) \to H^2 (S_{\C},\Z (1)) .
\]  

This map is injective, and the image of $\cl \otimes_{\Z} \Z (-1)$ coincides with $H^{1,1} \cap  H^2 (S_{\C},\Z)$. And we can define \emph{transcendental part} of $S_{\C}$ as below: 

\begin{dfn}
\label{2.1}
Let $S_{\C}$ be a K3 surface over $\C$. We denote \emph{transcendental part} of $S_{\C}$ as
\[
T(S_{\C}) := \Pic (S_{\C})^{\perp} \subset H^2 (S_{\C},\Z(1)),
\] 
which has the $\Z$-Hodge structure ($\Z$-HS) coming from $H^2 (S_{\C},\Z (1))$. 
\end{dfn}

Next, we will define the complex multiplication of K3 surfaces. 

\begin{dfn}
\label{2.2}
The K3 surface $S_{\C}$ is said to have \emph{complex multiplication} (CM) by $E$, if the $\Q$-algebra
\[
E(S_{\C}) := \End_{\Hdg} (T(S_{\C})) \otimes_{\Z} \Q
\]
is isomorphic to $E$, and $T(S_{\C}) \otimes_{\Z} \Q$ is $1$-dim $E(S_{\C})$-vector space. Moreover, we refer to such an $E$ as CM field (of K3 surface $S_{\C}$).
\end{dfn}

We fix an isomorphism $E\simeq E(S_{\C})$. Taking the canonical morphism $E \inj \End_{\C}(H^{2,0}(S_{\C})) \simeq \C$, we consider $E$ a subfield of $\C$.

Complex multiplication can be rephrased using the \emph{Mumford-Tate group}. Let $G:= \MT(T(S_{\C}) \otimes_{\Z} \Q)$ be the \emph{Mumford-Tate group} of $T(S_{\C}) \otimes_{\Z} \Q$. Since the $\Q$-Hodge structure on $T(S_{\C}) \otimes_{\Z} \Q$ is weight 0, $G$ is isomorphic to the Hodge group of $T(S_{\C}) \otimes_{\Z} \Q(-1)$. Zarhin showed that $G$ is isomorphic to the following algebraic group over $\Q$ \cite[Theorem 2.3.1]{Zarhin}
\[
\ker (\Nm : \Res_{E/\Q}\G_m \to \Res_{F/\Q} \G_m). 
\]

We fix an embedding $\overline{K} \inj \C$. We have an action of $\Gal(\overline{K}/K)$ on the $\mathrm{\etale}$ cohomology.
\[
H^2_{\et} (S_{\overline{K}},\A_{f}(1)) := \left(\prod_{p} H^2_{\et} (S_{\overline{K}},\Z_{p}(1)) \right) \otimes_{\Z} \Q
\]
where $\A_f :=  (\prod_{p} \Z_{p}) \otimes_{\Z} \Q$ is the ring of finite $\mathrm{\adele}$s of $\Q$. The embedding $\overline{K} \inj \C$ gives an isomorphism
\[
H^2_{\et} (S_{\overline{K}},\A_{f}(1)) \simeq H^2 (S_{\C},\Z (1)) \otimes_{\Z} \A_{f}.
\]
It induces an action of $\Gal(\overline{K}/K)$ on $T(S_{\C}) \otimes_{\Z} \A_{f}$.

Rizov showed \emph{the main theorem of complex multiplication for K3 surfaces} (cf. Theorem \ref{2.3}). We denote the involution on the ring $\A_{E,f}$ of finite $\mathrm{\adele}$s of $E$ induced by the complex conjugation $c: E \to E$ by the same symbol $c$. Let $\Art_K : \A_{K}^{\times} \to \Gal (K^{\ab}/K)$ be  the Artin reciprocity map provided by global class field theory with Deligne's normalization, i.e. which sends uniformizers to lifts of geometric Frobenius elements. Let $\Nm_{\A_K/\A_E} : \A_{K}^{\times} \to \A_{E}^{\times}$ be the norm map induced by the embedding $E \inj K$. 
\begin{thm}\cite{Rizov}
\label{2.3}
The action of $\Gal(\overline{K}/K)$ on $T(S_{\C}) \otimes_{\Z} \A_f$ factors through a continuous homomorphism
\[
\rho: \Gal(K^{\ab}/K) \to G(\A_f) = \{x \in \A^{\times}_{E,f}\ |\ xc(x) = 1\}.
\]
Moreover , the following diagram commutes: 
\[\begin{tikzcd}
	{\A^{\times}_{K}} && {\A^{\times}_{E}} \\
	{\Gal(K^{\ab}/K)} && {\A^{\times}_{E,f}} \\
	{G(\A_f)} && {G(\A_f)/G(\Q)}
	\arrow["{\Nm_{\A_K/\A_{E}}}", from=1-1, to=1-3]
	\arrow["{\Art_{K}}"', from=1-1, to=2-1]
	\arrow["\mathrm{pr}", from=1-3, to=2-3]
	\arrow["\rho"', from=2-1, to=3-1]
	\arrow["{y \mapsto c(y)y^{-1}}", from=2-3, to=3-3]
	\arrow[from=3-1, to=3-3]
\end{tikzcd}\]
where $G(\Q):=  \{x \in E^{\times} \ |\ xc(x) = 1\}$.
\end{thm}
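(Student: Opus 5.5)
This is Rizov's main theorem of complex multiplication for K3 surfaces. The paper cites it from \cite{Rizov} and does not reprove it; what follows is the proof route I would take.

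The plan is to reduce the statement to the main theorem of complex multiplication for abelian varieties (Shimura--Taniyama, in Deligne's formulation) via the Kuga--Satake construction. The proof splits into a qualitative half (the action on $T(S_{\C})\otimes\A_f$ factors through an abelian quotient landing in $G(\A_f)$) and a quantitative half (the reciprocity law in the diagram).

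\textbf{Step 1: reduction to the Mumford--Tate torus.} After a finite extension of $K$, the Galois action on $H^2_{\et}(S_{\overline K},\A_f(1))$ preserves $\cl(\Pic(S_{\C}))\otimes\A_f$, acting trivially on it. It therefore preserves its orthogonal complement $T(S_{\C})\otimes\A_f$. By Deligne's theorem that Hodge cycles on K3 surfaces are absolutely Hodge (proved through Kuga--Satake), the image of Galois lies in $G(\A_f)$.
\begin{itemize}
\item Here $G=\MT$ is the torus $\ker(\Nm:\Res_{E/\Q}\G_m\to\Res_{F/\Q}\G_m)$ by Zarhin's result recalled above.
\item Since $G$ is commutative, the representation factors through $\Gal(K^{\ab}/K)$.
\item Continuity is automatic.
\end{itemize}
To work over $K$ itself rather than an extension, one uses that the CM action of $E$ commutes with Galois. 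This holds because the endomorphisms in $E$ are given by absolutely Hodge (algebraic, by Mukai/Buskin in the CM case, or simply absolutely Hodge) correspondences, after enlarging $K$ so that they are defined over it.

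\textbf{Step 2: Kuga--Satake and abelian CM.} Let $A$ be the Kuga--Satake abelian variety attached to $T(S_{\C})\otimes\Q$. It is of CM type, and there is a Galois-equivariant embedding
\[
T(S_{\C})\otimes\A_f\hookrightarrow \End(H^1(A,\A_f))
\]
coming from the Clifford algebra. The Mumford--Tate group of $A$ is a torus $T$ inside $\Res_{E'/\Q}\G_m$ for a suitable CM algebra. It maps to $G$ through the spin-type morphism $\mathrm{GSpin}\to\mathrm{SO}$, which on tori is $y\mapsto c(y)y^{-1}$ after identifying the reflex data.

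Apply Shimura--Taniyama to $A$ (defined over $K$ after extension). Its Galois representation is given by $\Art_K(s)\mapsto r(s)$, where $r$ is built from the reflex norm $\Nm_{\A_K/\A_{E^*}}$ followed by the reflex type norm. The required reciprocity law is then obtained by pushing this forward along $T\to G$.

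\textbf{Step 3: the main obstacle, the reflex computation.} One must check that the composite of the reflex norm for the Kuga--Satake CM type with $T\to G$ equals the map $\A_K^\times\to\A_E^\times\to G(\A_f)/G(\Q)$, $y\mapsto c(y)y^{-1}$. Concretely, the Hodge cocharacter $\mu$ of $T(S_{\C})$ has one eigenline $H^{-1,1}$ on which $E$ acts through the fixed embedding $E\subset\C$. Its reflex field is therefore $E$, and the reflex norm is $\Nm_{\A_K/\A_E}$ composed with $\mu$, which gives $y\mapsto c(y)/y$ in $G$ up to sign conventions.

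I would verify this directly from the definition of $\mu$. I would then track Deligne's normalization of $\Art_K$ (uniformizers go to geometric Frobenius) to fix the sign, $c(y)y^{-1}$ rather than its inverse. The result holds only modulo $G(\Q)$ because Shimura--Taniyama determines the representation only up to the global ambiguity coming from the choice of isomorphisms. Finally, I would descend from the finite extension back to $K$ using the compatibility of norms in towers.
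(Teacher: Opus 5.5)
Your Kuga--Satake plus Shimura--Taniyama route is the right circle of ideas. Rizov's theorem, which the paper does not prove but simply cites (Rizov's Corollary~3.9.2 and Taelman's Theorem~12), also rests on Kuga--Satake varieties. However, your last step, the descent from a finite extension back to $K$, fails as stated.

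Shimura--Taniyama in its Galois form can only be applied over a finite extension $K'\supseteq K$ over which the Kuga--Satake variety $A$, its CM, and the Clifford embedding $T(S_{\C})\otimes\A_f\hookrightarrow\End(H^1(A,\A_f))$ are all defined and Galois-equivariant. Deligne's absoluteness of the construction produces such a $K'$, but gives you no control over it. So Steps 2--3 prove the diagram for $K'$ only.

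Compatibility of norms in towers transports this to $\Art_K(s)$ for $s\in K^\times\Nm_{K'/K}(\A_{K'}^\times)$, but not beyond. On all of $\A_K^\times$, consider the discrepancy $s\mapsto\rho(\Art_K(s))\,(c(y)y^{-1})^{-1}$, with $y=\mathrm{pr}\,\Nm_{\A_K/\A_E}(s)$. It factors through the finite group $\Gal((K'\cap K^{\ab})/K)$ into $G(\A_f)/G(\Q)$, and that target has nontrivial torsion. For example, the class of the finite id\`ele equal to $-1$ at the places of $E$ over one place of $F$ and $1$ elsewhere has order $2$. Nothing in your argument forces this character to vanish. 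Concretely, at a place $v$ of $K$ whose Frobenius is nontrivial on $K'\cap K^{\ab}$, you only determine a power of $\rho(\mathrm{Frob}_v)$ modulo $G(\Q)$.

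The missing input is compatibility of the Kuga--Satake construction with \emph{every} $\sigma\in\Aut(\C/E)$, not only those fixing $K'$. You need $A^{\sigma}$, with the conjugated Clifford embedding, to be a Kuga--Satake variety of $S^{\sigma}$. Then Shimura's main theorem in its $\Aut(\C/E^*)$-form (an isogeny $A\to A^{\sigma}$ carrying $r(s)$ to $\sigma$ on the $\A_f$-cohomology) induces a Hodge isometry $\eta\colon T(S)_{\Q}\to T(S^{\sigma})_{\Q}$ with $\sigma=\eta\circ r(s)$ on $T(S_{\C})\otimes\A_f$. This is the K3 analogue of Theorem~\ref{2.7}.

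Rizov obtains this by showing that the Kuga--Satake (period) morphism from the moduli of polarized K3 surfaces with spin level structure to the $\mathrm{GSpin}$ Shimura variety is compatible with canonical models, and then using the reciprocity law at special points. The diagram over $K$ is then read off by restricting to $\sigma\in\Aut(\C/K)$, where $S^{\sigma}=S$ forces $\eta\in G(\Q)$, so no descent is ever needed. If you only want the diagram after some finite extension of $K$, your outline is adequate; that suffices for reduction applications, where Frobenius may be replaced by a power.

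Relatedly, in Step~1 the reason $\Gal(\overline{K}/K)$ itself commutes with $E$ is not that the endomorphisms become defined after enlarging $K$. It is the standing hypothesis $E\subset K$. A Galois conjugate $\sigma(e)$ of an absolutely Hodge $e\in E$ again lies in $E$, and it acts on $H^0(S,\Omega^2)$, which is defined over $K$, by the scalar $\sigma(e)$. So $\sigma|_E=\id$ gives $\sigma(e)=e$.
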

\begin{proof}
See \cite[Corollary 3.9.2]{Rizov} and \cite[Theorem 12]{Taelman}. 
\end{proof}

On the other hand, we can also define same properties and theorem for cubic fourfold due to Rikuto Ito. To see that, we first define it as following :
\begin{dfn}
\label{2.4}
A \emph{cubic fourfold} over $k$ is smooth projective hypersurface of degree $3$ in $\P^5_k$.
\end{dfn}

 Let $\CH^2 (X_{\C})$ be the 2nd Chow group of cubic fourfold $X_{\C}$ over $\C$. Consider the cycle map 
\[
\cl^2 : \CH^2 (X_{\C}) \to H^4 (X_{\C},\Z (2)) .
\]  

This map is injective, and $A(X_{\C})$, which is the image of $\cl^2 \otimes_{\Z} \Z (-2)$, coincides with $H^{2,2} \cap H^4 (X_{\C},\Z)$ by Hodge conjecture for cubic fourfold \cite{Zucker}. And we can define \emph{transcendental part} and \emph{complex multiplication} of $X_{\C}$ as follows: 

\begin{dfn}
\label{2.5}
Let $X_{\C}$ be a cubic fourfold over $\C$. We denote \emph{transcendental part} of $X_{\C}$ as
\[
T(X_{\C}) := A (X_{\C})^{\perp} \subset H^4 (X_{\C},\Z(2)),
\] 
which has the $\Z$-HS coming from $H^4 (X_{\C},\Z (2))$. 

Let  $F$ be a totally real number field, and $E$ be a totally imaginary quadratic extension of $F$. The cubic fourfold $X_{\C}$ is said to have \emph{complex multiplication} (CM) by $E$, if the $\Q$-algebra
\[
E(X_{\C}) := \End_{\Hdg} (T(X_{\C})) \otimes_{\Z} \Q
\]
is isomorphic to $E$, and $T(X_{\C}) \otimes_{\Z} \Q$ is $1$-dim $E(X_{\C})$-vector space. Moreover, we refer to such an $E$ as CM field (of cubic fourfold $X_{\C}$). 
\end{dfn}

\begin{exa}
\label{2.6}
The simplest example of cubic fourfold $X_k$ is the Fermat type defined by 
\[
X_{0}^3 + X_{1}^3 +X_{2}^3 +X_{3}^3 +X_{4}^3 +X_{5}^3 = 0. 
\]
If $k = \C$, it is known that $X_{\C}$ has a CM field by $\Q (\sqrt{-3})$. 
\end{exa}

We fix an isomorphism $E\simeq E(X_{\C})$. Taking the canonical morphism $E \inj \End_{\C}(H^{3,1}(X_{\C})) \simeq \C$, we consider $E$ a subfield of $\C$.

Let $G: = \MT (T(X_{\C}) \otimes_{\Z} \Q)$ be a \emph{Mumford-Tate group} of $T(X_{\C})$. Since $\Q$-HS on $T(X_{\C}) \otimes_{\Z} \Q$ is weight 0, $G$ is isomorphic to Hodge group of $T(X_{\C}) \otimes_{\Z} \Q (-2)$. We can show that $G$ is isomorphic to the following algebraic group over $\Q$ like the case of K3 surfaces \cite[Theorem 2.3.1, Remarks 2.6]{Zarhin}. 
\[
\ker (\Nm : \Res_{E/\Q} \G_m \to \Res_{F/\Q} \G_m)
\]

We fix an embedding $\overline{K} \inj \C$. We have an action of $\Gal(\overline{K}/K)$ on the $\mathrm{\etale}$ cohomology like the case of K3 surface.
\[
H^4_{\et} (X_{\overline{K}},\A_{f}(2)) := \left(\prod_{p} H^4_{\et} (X_{\overline{K}},\Z_{p}(2)) \right) \otimes_{\Z} \Q
\]
The embedding $\overline{K} \inj \C$ gives an isomorphism
\[
H^4_{\et} (X_{\overline{K}},\A_{f}(2)) \simeq H^4 (X_{\C},\Z (2)) \otimes_{\Z} \A_{f}
\]
It induces an action of $\Gal(\overline{K}/K)$ on $T(X_{\C}) \otimes_{\Z} \A_{f}$.

Rikuto Ito showed that \emph{the main theorem of complex multiplication for rank-21 cubic fourfold} like Rizov. Moreover, he generalize the theorem for cubic fourfolds of any rank.
\begin{thm}
\label{2.7}
Let $X_\C$ be a cubic fourfold over $\C$ with reflex field $E \subset \C$. Let $\sigma \in \Aut (\C/E)$ and $s \in \A^{\times}_{E}$ be an $\idele$ with $\Art_{E}(s) = \sigma|_{E^{\ab}}$. Then there exists a unique Hodge isometry $\eta : P^4 (X,\Q (2)) \to P^4 (X^{\sigma},\Q  (2))$ such that the following diagram 
\[\begin{tikzcd}
	{P^4(X,\Q(2))_{\A_f}} && {P^4(X^{\sigma},\Q(2))_{\A_f}} \\
	{P^4(X,\Q(2))_{\A_f}}
	\arrow["{\eta \otimes \A_f}", from=1-1, to=1-3]
	\arrow["{r_{X}(s)}", from=2-1, to=1-1]
	\arrow["\sigma"', from=2-1, to=1-3]
\end{tikzcd}\]
commutes.
\end{thm}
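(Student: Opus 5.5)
Theorem \ref{2.7} is the main theorem of complex multiplication for cubic fourfolds due to Rikuto Ito \cite[Theorem 5.1]{ItoR}. I would prove it by transporting Rizov's theorem (Theorem \ref{2.3}) for K3-type objects through the Fano variety of lines $F(X)$, rather than arguing directly on $X$. The route is as follows. First, the Beauville--Donagi Abel--Jacobi map $\alpha: H^4(X,\Z) \to H^2(F(X),\Z)$ restricts to a Hodge isometry (up to sign and Tate twist) from the primitive cohomology $P^4(X,\Z(2))$ onto the primitive part $P^2(F(X),\Z(1))$ with respect to the Plücker polarization $g$. Second, $\alpha$ is induced by the universal line correspondence, which is algebraic and defined over the field of definition of $X$. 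Hence $\alpha$ commutes with $\sigma \in \Aut(\C)$ on both Betti and \'etale sides, compatibly with the comparison isomorphisms. This reduces the statement to the analogous one for $P^2(F(X),\Q(1))$ with its Galois action.

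Next I would invoke the main theorem of CM for hyperkähler varieties of $\HK$-type. One option is the general Shimura-variety argument of Rizov/Madapusi Pera, which uses the Kuga--Satake construction and is available for $\HKn$-type varieties as in \cite{Yang}. Alternatively, one can mimic Rizov's argument directly. Let $\mathrm{KS}$ denote the Kuga--Satake abelian variety attached to $(P^2(F(X),\Q(1)), q_{BB})$. By Deligne's absolute Hodge results (André's for hyperkählers), the Kuga--Satake embedding $P^2 \hookrightarrow \End(H^1(\mathrm{KS}))$ is absolutely Hodge and hence $\sigma$-equivariant. The classical Shimura--Taniyama main theorem for the CM abelian variety $\mathrm{KS}$ then describes $\sigma$ on $H^1_{\et}(\mathrm{KS},\A_f)$ by an explicit element $r(s)$ of the Mumford--Tate torus, up to a Hodge isomorphism $H^1(\mathrm{KS}) \to H^1(\mathrm{KS}^\sigma)$. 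Restricting to $P^2$ via the spin representation $\mathrm{GSpin} \to \mathrm{SO}$ yields a Hodge isometry $\eta$ with $\sigma = (\eta\otimes\A_f)\circ r_X(s)$. Here $r_X$ is the reciprocity map $\A_E^\times \to G(\A_f)$, obtained as the composite of the norm to the reflex field with the cocharacter $\mu$, and it matches the formula $y\mapsto c(y)y^{-1}$ of Theorem \ref{2.3} on transcendental parts. On the algebraic part, $\sigma$ acts trivially after the twist because cycles are mapped to cycles. This is consistent, since $r_X(s)$ acts trivially there.

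Uniqueness follows because two such $\eta$ differ by an element of $G(\A_f)\cap \mathrm{Aut}_{\Hdg}$ acting trivially in the diagram, so their $\A_f$-extensions coincide and hence so do the $\eta$. Existence as a \emph{Hodge} isometry defined over $\Q$, rather than merely over $\A_f$, is the key point. It comes from the abelian variety side, where the Hodge isomorphism is induced by an actual isomorphism of abelian varieties up to isogeny.

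The main obstacle is the middle step. The Kuga--Satake construction lives over $\C$, and one must show that $\mathrm{KS}^\sigma$ is the Kuga--Satake variety of $F(X)^\sigma$ compatibly with the Beauville--Bogomolov form and the twist, and that the spin lift of the reciprocity element descends correctly. This includes controlling the sign ambiguity $\pm 1$ in $\mathrm{GSpin}\to \mathrm{SO}$, which is absorbed into $\eta$. The sign issue in $\alpha$ (the polarization becomes negative) also needs care so that $\eta$ is an isometry rather than an anti-isometry.
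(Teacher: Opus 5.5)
The paper gives no argument of its own for Theorem \ref{2.7}. It quotes \cite[Theorem 5.1]{ItoR} for rank $21$ and \cite[Corollary 5.8]{ItoR} for arbitrary rank, results it describes as obtained ``like Rizov''. Judging from the title of \cite{ItoR}, this presumably means constructing an arithmetic period map to an orthogonal Shimura variety, so that the statement becomes Shimura reciprocity at a CM point. Your route is genuinely different and pointwise:
\begin{itemize}
\item Beauville--Donagi moves $P^4(X,\Q(2))$ Galois-equivariantly to $P^2(F(X),\Q(1))$.
\item Andr\'e's theorem that the Kuga--Satake correspondence of a hyperk\"ahler variety is motivated makes $\mathrm{KS}^\sigma$ a Kuga--Satake variety for $F(X)^\sigma$.
\item Shimura--Taniyama for the CM abelian variety $\mathrm{KS}$ produces a rational Hodge isomorphism $\phi$, which you push down along $\mathrm{GSpin}\to\mathrm{SO}$.
\end{itemize}
This avoids building a period map over $\Q$ and handles all ranks at once, but it rests on Andr\'e's motivated-cycle theorem. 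The period-map approach gives a stronger global descent statement, from which the CM theorem follows via canonical models. Note that Theorem \ref{2.3} itself is about K3 surfaces, and $F(X)$ is in general not of the form $S^{[2]}$. So what you actually use is Rizov's Kuga--Satake argument redone for $F(X)$, not his theorem; your second paragraph does exactly this.

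Several steps should be made explicit.
\begin{itemize}
\item You need the reflex field of $\mathrm{KS}$ to lie in $E$; otherwise Shimura--Taniyama only applies to $\sigma$ fixing a larger field. This holds because, up to a central cocharacter, the lift of $\mu$ to $\mathrm{GSpin}$ is $z\mapsto z\,e_1e_2+e_2e_1$. Here $e_1\in V^{1,-1}$, $e_2\in V^{-1,1}$ are normalized by $e_1e_2+e_2e_1=1$, and both lines are $E$-rational.
\item The descent is then formal. We have $\mathrm{Ad}(\phi)\otimes\A_f=\mathrm{Ad}(\sigma)\circ\mathrm{Ad}(r_{\mathrm{KS}}(s))^{-1}$, and this carries $V_{\A_f}$ onto $V^\sigma_{\A_f}$. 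Hence the rational map $\mathrm{Ad}(\phi)$ carries $V_\Q$ into $V^\sigma_{\A_f}\cap\End(H^1(\mathrm{KS}^\sigma,\Q))=V^\sigma_\Q$. It is an isometry because it preserves squares in the Clifford algebra.
\item Since $r_{\mathrm{KS}}(s)$ maps to $r_X(s)$ in $\mathrm{SO}(V)(\A_f)$, the map $\eta:=\mathrm{Ad}(\phi)|_V$ gives the diagram.
\item $\mathrm{Ad}$ kills the central $\G_m$, so there is no $\pm1$ ambiguity to absorb. The sign of the Abel--Jacobi map cancels because you conjugate by it on both sides.
\item On algebraic classes, $\sigma$ does not ``act trivially'', since it lands in $X^\sigma$. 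It sends algebraic classes to algebraic classes, which is all that is needed.
\item Uniqueness is just injectivity of $P^4(X,\Q(2))\to P^4(X,\Q(2))_{\A_f}$.
\end{itemize}
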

\begin{proof}
See \cite[Theorem 5.1, Corollary 5.8]{ItoR}
\end{proof}

He also reformulated the statement to give it a more algebraic meaning.
\begin{thm}
\label{2.8}
Let $X_{\C}$ be a cubic fourfold with the refrex field $E \subset K$. Suppose that $X$ has a model $X_0$ over $\Q$. Let $G$ be the Mumford-Tate group of $T(X)$. Then:

(1)\ The restriction $\rho|_{\Gal (\overline{\Q}/K)}$ factors as
\[
\Gal(\overline{\Q}/K) \to \Gal (K^{\ab}/K) \overset{\rho_0}\to G(\A_f) \inj \GL(T(X)_{\A_f}) 
\]
for a continuous homomorphism $\rho_0$.

(2)\ The diagram 
\[\begin{tikzcd}
	{\A^{\times}_K} && {\Gal(K^{\ab}/K)} \\
	{G(\A_f)} && {G(\A_{f})/G(\Q)}
	\arrow["{\Art_{K}}", from=1-1, to=1-3]
	\arrow["{r_X}"', from=1-1, to=2-1]
	\arrow["{\rho_0}", from=1-3, to=2-3]
	\arrow[from=2-1, to=2-3]
\end{tikzcd}\]
is commutative. 
\end{thm}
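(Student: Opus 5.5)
We derive Theorem \ref{2.8} from Theorem \ref{2.7}, as in the K3 case (Rizov, Taelman). Here $\rho$ is the Galois representation on $T(X)_{\A_f}$ given by comparing étale and Betti cohomology via $\overline{\Q}\inj\C$, and $r_X$ is the reciprocity map of the statement, i.e.\ $r_X$ applied to $\Nm_{\A_K/\A_E}$ of the idèle.

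\textbf{Part (1).} Take $\sigma\in\Aut(\C/K)$. Since $X$ is defined over $\Q\subset K$, we have $X^\sigma=X$, and $\sigma$ acts on $H^4_{\et}(X_{\overline{\Q}},\A_f(2))$ exactly through its restriction to $\overline{\Q}$. Choose $s\in\A_K^\times$ with $\Art_K(s)=\sigma|_{K^{\ab}}$. Then $\Art_E(\Nm_{\A_K/\A_E}s)=\sigma|_{E^{\ab}}$, so Theorem \ref{2.7} applies to $\Nm(s)$. It gives a Hodge isometry $\eta$ of $P^4(X,\Q(2))$ with $\sigma=(\eta\otimes\A_f)\circ r_X(\Nm s)$.

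Because $\eta$ is a Hodge isometry, it preserves $A(X)_\Q$ and $T(X)_\Q$. On $T(X)_\Q$ it commutes with the $E$-action, so it lies in $\End_{\Hdg}(T(X)_\Q)=E$ and has norm $1$. Hence $\eta|_{T}\in G(\Q)$, using Zarhin's description of $G$. Since $r_X(\Nm s)\in G(\A_f)$, we get $\rho(\sigma)|_T\in G(\A_f)$. As $G$ is commutative, $\rho|_{\Gal(\overline{\Q}/K)}$ lands in an abelian group. So it factors through $\Gal(K^{\ab}/K)$, and we obtain the continuous homomorphism $\rho_0$.

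\textbf{Part (2).} The same identity reads $\rho_0(\Art_K(s))=\eta\cdot r_X(\Nm s)$ with $\eta\in G(\Q)$. Therefore $\rho_0(\Art_K(s))\equiv r_X(s)$ modulo $G(\Q)$, which is exactly commutativity of the square. One point needs checking: $\rho_0(\Art_K(s))$ must not depend on the choice of $s$ modulo $\ker\Art_K$, and the class of $\eta$ must be well defined. Uniqueness of $\eta$ in Theorem \ref{2.7} makes both automatic.

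\textbf{Main obstacle.} I expect the hardest step to be the claim $\eta|_T\in G(\Q)$. This needs $\End_{\Hdg}(T)\otimes\Q=E$ with $T_\Q$ one-dimensional over $E$, which holds in the CM situation the paper assumes. It also needs that an isometry commuting with $E$ is given by an element $x$ with $x\,c(x)=1$. That follows because the form on $T$ is Hermitian with respect to the complex conjugation $c$ on $E$.

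A secondary point is compatibility of conventions. Deligne's geometric-Frobenius normalization of $\Art$ and the definition of $r_X$ from \cite{ItoR} must match, so that no inverse or conjugation $c$ appears; I would verify this by comparing with \cite[Corollary 5.8]{ItoR}.
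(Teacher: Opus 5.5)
Your argument is correct and follows the paper's route. The paper only cites \cite[Theorem 6.1]{ItoR}, with Corollary 5.8 (Theorem \ref{2.7} here) replacing Theorem 5.1, and your deduction is that derivation written out: apply Theorem \ref{2.7} to $\Nm_{\A_K/\A_E}(s)$, note that the Hodge isometry $\eta$ restricts to an element of $G(\Q)$ on $T(X)_\Q$, and use commutativity of $G$ to factor through $\Gal(K^{\ab}/K)$; as you say, only the normalizations of $\Art$, $r_X$ and the $\sigma$-action still need to be matched against \cite{ItoR}.
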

\begin{proof}
See \cite[Theorem 6.1]{ItoR}. Note that although he proved only the rank-$21$ cases, we can obtain the statement for cubic fourfolds of arbitrary rank replacing Theorem 5.1 to Corollary 5.8.
\end{proof}

Therefore, we can obtain a commutative diagram which coincides with Rizov's one
\begin{thm}
\label{2.9}
The action of $\Gal(\overline{K}/K)$ on $T(X_{\C}) \otimes_{\Z} \A_f$ factors through a continuous homomorphism
\[
\rho: \Gal(K^{\ab}/K) \to G(\A_f) = \{x \in \A^{\times}_{E,f}\ |\ xc(x) = 1\}.
\]
Moreover, the following diagram commutes: 
\[\begin{tikzcd}
	{\A^{\times}_{K}} && {\A^{\times}_{E}} \\
	{\Gal(K^{\ab}/K)} && {\A^{\times}_{E,f}} \\
	{G(\A_f)} && {G(\A_f)/G(\Q)}
	\arrow["{\Nm_{\A_K/\A_{E}}}", from=1-1, to=1-3]
	\arrow["{\Art_{K}}"', from=1-1, to=2-1]
	\arrow["\mathrm{pr}", from=1-3, to=2-3]
	\arrow["\rho"', from=2-1, to=3-1]
	\arrow["{y \mapsto c(y)y^{-1}}", from=2-3, to=3-3]
	\arrow[from=3-1, to=3-3]
\end{tikzcd}\]
\end{thm}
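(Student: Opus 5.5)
The plan is to deduce Theorem \ref{2.9} from Theorem \ref{2.8}, and to identify the reciprocity map $r_X$ there with the composite $y\mapsto c(y)y^{-1}$ applied to $\Nm_{\A_K/\A_E}$, exactly as in Rizov's setting (Theorem \ref{2.3}).

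\textbf{Step 1: factoring the action.} By Theorem \ref{2.8}(1), the Galois action of $\Gal(\overline{K}/K)$ on $T(X_{\C})\otimes_{\Z}\A_f$ factors through $\Gal(K^{\ab}/K)$ via a continuous $\rho:=\rho_0$ with values in $G(\A_f)$. The hypothesis there that $X$ has a model over $\Q$ should be replaceable by the given model $X_K$: the action on $H^4_{\et}(X_{\overline K},\A_f(2))$ preserves the algebraic part $A(X_{\C})\otimes\A_f$, hence also its orthogonal complement $T(X_{\C})_{\A_f}$. Since $X_K$ is defined over $K\supset E$, the Galois action commutes with the endomorphisms coming from $E(X_{\C})$, after enlarging $K$ if necessary so that the relevant Hodge cycles on $X\times X$ are defined over $K$, which is harmless.

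\textbf{Step 2: identifying $G(\A_f)$.} Next identify $G(\A_f)$ with $\{x\in\A_{E,f}^\times : xc(x)=1\}$, using the description of $G$ as $\ker(\Nm:\Res_{E/\Q}\G_m\to\Res_{F/\Q}\G_m)$ recalled above, via \cite{Zarhin}. Here $E$ acts on the $1$-dimensional $E$-space $T(X_{\C})_{\Q}$, so $\GL_E(T_{\A_f})=\A_{E,f}^\times$.

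\textbf{Step 3: computing $r_X$.} This is the main obstacle. One must show that for $s\in\A_K^\times$ one has
\[
r_X(s)\equiv c(y)y^{-1}\pmod{G(\Q)}, \qquad y=\mathrm{pr}(\Nm_{\A_K/\A_E}(s)).
\]
I would unwind Ito's definition of $r_X$: it is the reciprocity map attached to the Hodge cocharacter $\mu$ of the CM Hodge structure $T(X_{\C})_{\Q}$, namely $r(E,\mu)\circ\Nm_{K/E}$. Since $T(X_{\C})_{\Q}(-2)$ has Hodge numbers $(1,0,1)$ concentrated in types $(1,-1),(0,0),(-1,1)$ in the $E$-eigen-decomposition, the cocharacter $\mu$ acts by $z$ on the line $H^{3,1}$ (the embedding $E\subset\C$), by $z^{-1}$ on its conjugate, and trivially on the rest. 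This is formally identical to the K3 case, where $H^{2,0}$ plays the role of $H^{3,1}$, so the computation in \cite[Theorem 12]{Taelman} and \cite{Rizov} transfers verbatim. The reflex norm therefore yields $y\mapsto y^{-1}c(y)$, up to the sign conventions that Deligne's normalization of $\Art_K$ is meant to fix; these conventions need careful checking against \cite{ItoR}.

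\textbf{Step 4: conclusion.} Combining Steps 1--3 with the commutative square of Theorem \ref{2.8}(2) gives the commutativity of the stated diagram.
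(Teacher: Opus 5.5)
Your plan is correct at the level of detail the paper itself gives, but it is organized differently.

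The paper factors the diagram through $\Gal(E^{\ab}/E)$. The two upper squares are just class field theory: compatibility of $\Art_K$, $\Art_E$, restriction and $\Nm_{\A_K/\A_E}$. The lower square says $\rho(\sigma)\equiv c(s)s^{-1} \bmod G(\Q)$ whenever $\Art_E(s)=\sigma|_{E^{\ab}}$. The paper obtains it by ``the same argument as Taelman'': apply the $E$-level main theorem (Theorem \ref{2.7}) to $\sigma\in\Aut(\C/K)$ and note that $X^\sigma=X$. The Hodge isometry $\eta$ then restricts to a Hodge isometry of $T(X_{\C})_{\Q}$, so it is a norm-one element of $E$, i.e.\ it lies in $G(\Q)$.

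You instead start from Ito's already-descended Theorem \ref{2.8}(2). There the norm compatibility is absorbed into $r_X$ on $\A_K^\times$, and your work goes into identifying $r_X=r(E,\mu)\circ\Nm_{K/E}$ with $y\mapsto c(y)y^{-1}$ via the reflex norm.

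What each route buys:
\begin{itemize}
\item Your route makes explicit the one input specific to cubic fourfolds, which the paper leaves buried in its citation of Taelman and Rizov.
\item The paper's lower square rests on Theorem \ref{2.7}, which is cited for every rank. Taelman's argument also yields the factorization of the action for the given $K$.
\item Consequently your aside about enlarging $K$ is unnecessary. It would in any case not be harmless for the statement as posed, since the theorem is about $\Gal(\overline{K}/K)$ for the given $K$.
\end{itemize}

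Two small corrections to your Step 3:
\begin{itemize}
\item $T(X_{\C})\subset H^4(X_{\C},\Z(2))$ is already of weight $0$, with types $(1,-1),(0,0),(-1,1)$. Also $h^{0,0}=[E:\Q]-2$, which vanishes only for imaginary quadratic $E$. This does not affect the computation, since $\mu$ is trivial on that part.
\item With the convention you wrote ($\mu(z)$ acting by $z$ on $H^{3,1}$), $\Nm_{E/\Q}\circ\mu$ gives $y\,c(y)^{-1}$, the inverse of what you need. To get $c(y)y^{-1}$ you must use Deligne's convention, where $\mu(z)$ acts on $V^{p,q}$ by $z^{-p}$. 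That is the convention paired with the geometric-Frobenius normalization of $\Art$ in Rizov and Taelman.
\end{itemize}
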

\begin{proof}
The former is already proven. For the latter, consider the following diagram: 
\[\begin{tikzcd}
	{\mathbb A^{\times}_{K}} & {\mathbb A^{\times}_{E}} & {\mathbb A^{\times}_{E,f}} \\
	{\Gal (K^{\ab}/K)} & {\Gal (E^{\ab}/E)} & {\mathbb A^{\times}_{E,f}/E^{\times}} \\
	{G(\mathbb A_{f})} && {G(\mathbb A_{f})/G(\mathbb Q)}
	\arrow["{\Nm_{\mathbb A_K /\mathbb A_{E}} }", from=1-1, to=1-2]
	\arrow["{\Art_K}"', from=1-1, to=2-1]
	\arrow["{\mathrm{pr}}", from=1-2, to=1-3]
	\arrow["{\Art_E}"', from=1-2, to=2-2]
	\arrow[from=1-3, to=2-3]
	\arrow[from=2-1, to=2-2]
	\arrow["\rho"', from=2-1, to=3-1]
	\arrow["CFT", from=2-2, to=2-3]
	\arrow[from=2-3, to=3-3]
	\arrow[from=3-1, to=3-3]
\end{tikzcd}\]
Since we can use the same argument as \cite{Taelman}, the lower square is commutative. The upper squares are commutative from their properties. So, we obtain that outside of this diagram is commutative.
\end{proof}

\subsection{Rank, height and supersingularity of cubic fourfold}
\label{sec2.2}
Let $X_{k}$ be a cubic fourfold over $k$. In this section, we define \emph{rank, height and supersingularity} of cubic fourfold. These are analogues of {Picard number, (slope) height, and supersingularity} of K3 surface.
\begin{dfn}
\label{2.10}
Consider the cycle map
\[
\cl^2 : \CH^2 (X_{\C}) \to H^4 (X_{\C},\Z (2)).
\]
If the rank of image $A(X_{\C})$ is $n$, we call $X_{\C}$ a \emph{rank-n} cubic fourfold. 
\end{dfn}

\begin{exa}
\label{2.11}
Fermat type in $\mathbb P^5_{\C}$: 
\[
X_{0}^{3} +X_{1}^{3}+X_{2}^{3}+X_{3}^{3}+X_{4}^{3} +X_{5}^{3} = 0 
\]
is rank-$21$ cubic fourfold over $\C$. 
\end{exa}

\begin{rmk}
\label{2.12}
Rank-$21$ cubic fourfold correspond to \emph{singular K3 surfaces}, which has Picard number $\rho = 20$. see also \cite{ItoR}
\end{rmk}

In positive characteristic, we define rank and Shioda supersingularity as follows. 

\begin{dfn}
\label{2.13}
Consider the cycle map
\[
\cl^2_{\crys} : \CH^2 (X_{k})\otimes W \to H^4_{\crys} (X_k /W).
\]
If the rank of this image $A(X)_k$ is $n$, we call $X_{k}$ a \emph{rank-n} cubic fourfold. Moreover, we call it (Shioda) supersingular if $n=23$.
\end{dfn}

We next define the \emph{height} of cubic fourfolds and Artin supersingularity. If we define the height based on the formal group like K3 surfaces, it does not become a 1-dimensional formal group, so that definition does not work. Therefore, we use \emph{slope} of the $F$-crystal given by the crystalline cohomology of cubic fourfold. Let $k$ be an algebraically closed field such that $\ch k =p \geq 7$ so that $H^n_{\crys} (F(X_k)/W)$ is torsion-free. We denote the $F : X_k \to X_k$ absolute Frobenius morphism. They induce $\varphi : H^4_{\crys} (X_{k}/W) \to H^4_{\crys} (X_{k}/W)$ which is a $\sigma$-linear map. The tuple 
\[
(H^4 (X_{k}/W), \varphi)
\]
defines the $F$-crystal structure. Note that this cannot be a K3 crystal which was defined by Ogus \cite[Definition 3.1]{Ogus} because its slope does not have suitable slopes. So we consider the twist
\[
(H^4_{\crys}(X_{k}/W (1)), \varphi)
\]
which is a K3 crystal that comes from $\HK$-type variety $H^2_{\crys} (F(X_k)/W)$. For details, see \cite[(5.2.4)]{Yang}. In particular, we can define the Artin invariant for cubic fourfold from this meaning. 

For the $F$-crystal coming from a smooth proper scheme over perfect field can define Newton polygon of the slope, See \cite[section 3]{Liedtke}.

\begin{dfn}
\label{2.14}

If $h \in \{1,\dots,10,11,\infty\}$ satisfies following conditions, it is called \emph{height} of cubic fourfold. 

(i)\  If the slopes of the Newton polygon of $H^4_{\crys} (X_k/W)$ are $2-\dfrac{1}{h},2,2+\dfrac{1}{h}$ , then the height is $h$. 

(ii)\ If the Newton polygon of $H^4_{\crys} (X_k /W)$ is a straight line (i.e., the slope is $2$), then $h = \infty$.
\end{dfn}

We denote the height of $X_k$ by $h(X_{k})$. 

Of the lines in the figure on the left below, the upper one corresponds to the supersingular case ($h=\infty$), while the lower one corresponds to ($h=1$). The figure on the right illustrates how the graph changes between these lines for each value of $h$.

\begin{figure}[htbp]
\begin{minipage}{0.48\columnwidth}
\begin{tikzpicture}
\label{picture}

\draw[-stealth](-1,-2)--(-1,3.5)node[left]{$y$};
\draw[-stealth](-2,-1.5)--(4.0,-1.5)node[right]{$x$};
\draw(-1,-1.5)node[below left]{O};

\draw [domain=-1:3] plot[smooth](\x,\x-0.5); 

\draw [domain=-1:0] plot[smooth](\x, 0.5*\x-1); 

\draw [domain=0:2] plot[smooth](\x, \x-1); 

\draw [domain=2:3] plot[smooth](\x, 1.5*\x-2); 

\draw[dashed](-1,-1)--(0,-1); 
\draw[dashed](0,-1)--(0,-1.5); 

\draw[dashed](-1,1)--(2,1); 
\draw[dashed](2,1)--(2,-1.5); 

\draw[dashed](-1,2.5)--(3,2.5); 
\draw[dashed](3,2.5)--(3,-1.5); 

\draw(0,-1.5)node[below]{$1$};
\draw(2,-1.5)node[below]{$22$};
\draw(3,-1.5)node[below]{$23$};

\draw(-1,-1)node[left]{$1$};
\draw(-1,1)node[left]{$43$};
\draw(-1,2.5)node[left]{$46$};
\end{tikzpicture}
\end{minipage}
\begin{minipage}{0.48\columnwidth}
\begin{tikzpicture}

\draw[-stealth](-1,-2)--(-1,3.5)node[left]{$y$};
\draw[-stealth](-2,-1.5)--(4.0,-1.5)node[right]{$x$};
\draw(-1,-1.5)node[below left]{O};

\draw [domain=-1:0.5] plot[smooth](\x, 0.75*\x-0.75); 

\draw [domain=0.5:1.5] plot[smooth](\x, \x-0.875); 

\draw [domain=1.5:3] plot[smooth](\x, 1.25*\x-1.25);

\draw[dashed](-1,-0.35)--(0.5,-0.35); 
\draw[dashed](0.5,-0.35)--(0.5,-1.5); 

\draw[dashed](-1,0.65)--(1.52,0.65); 
\draw[dashed](1.52,0.65)--(1.52,-1.5); 

\draw[dashed](-1,2.5)--(3,2.5); 
\draw[dashed](3,2.5)--(3,-1.5); 

\draw(3,-1.5)node[below]{$23$};
\draw(0.5,-1.5)node[below]{$h$};
\draw(1.5,-1.5)node[below]{$23-h$};

\draw(-1,2.5)node[left]{$46$};
\draw(-1,-0.35)node[left]{$2h-1$};
\draw(-1,0.65)node[left]{$45-2h$};

\end{tikzpicture}
\end{minipage}
\end{figure}

These two definitions of supersingularity are equivalent. Therefore, we will simply refer to them as supersingular cubic fourfold.

\begin{prop}
\label{2.15}
Let $\ch k= p  \geq 3$. The following are equivalent.

(i)\ $X_k$ is rank-$23$ cubic fourfold. (i.e. $X_k$ is Shioda supersingular)

(ii)\ $h (X_k) =\infty$. (i.e. $X_k$ is Artin supersingular)
\end{prop}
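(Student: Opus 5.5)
The plan is to pass through the Fano variety of lines $F(X_k)$, a $\HK$-type variety, and reduce to the known equivalence of Shioda and Artin supersingularity for K3 crystals and $\HK$-type varieties. The key input is the Abel--Jacobi (cylinder) correspondence given by the universal line $P \subset F(X_k)\times X_k$. It induces a map
\[
\alpha : H^4_{\crys}(X_k/W)(1) \to H^2_{\crys}(F(X_k)/W),
\]
which is compatible with Frobenius. It also sends algebraic cycles to algebraic cycles: $\CH^2(X_k)\to \Pic(F(X_k))$. After inverting $p$ (or integrally away from small $p$, as in \cite[(5.2.4)]{Yang}), $\alpha$ identifies the primitive part of $H^4_{\crys}(X_k/W)(1)$ with the primitive part of $H^2_{\crys}(F(X_k)/W)$ with respect to the Plücker polarization $g$, and it sends $h^2$ to a multiple of $g$ up to the appropriate correction. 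Since $\rank H^4=23=\rank H^2(F)$, I would take this isomorphism of $F$-isocrystals as the starting point, citing Yang.

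\emph{(i)$\Rightarrow$(ii).} Suppose $X_k$ has rank $23$. Then the image of $\CH^2(X_k)\otimes W$ spans $H^4_{\crys}(X_k/W)\otimes K$, where $K=\Frac W$. Crystalline cycle classes of codimension-2 cycles satisfy $\varphi(\cl(Z))=p^2\cl(Z)$, so $\varphi/p^2$ fixes a spanning set. Hence every slope equals $2$, the Newton polygon is a straight line, and $h(X_k)=\infty$. This direction is formal and needs no restriction on $p$.

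\emph{(ii)$\Rightarrow$(i).} Assume all slopes are $2$. Then the twist $H^4_{\crys}(X_k/W)(1)$ is a supersingular K3 crystal of rank $23$. Via $\alpha$, the isocrystal $H^2_{\crys}(F(X_k)/W)$ has all slopes $1$, so $F(X_k)$ is Artin supersingular. I would then invoke the Tate conjecture for $\HK$-type varieties in odd characteristic, i.e.\ the statement that Artin supersingular implies Shioda supersingular for $H^2$, as established by Yang and by Charles/Madapusi Pera via Kuga--Satake. This gives $\rho(F(X_k))=23$. To transport back to $X_k$, I would use the inverse correspondence. There is a class $\beta$ in $\CH^*(F\times X)$, built from $P$ and $g$ (Beauville--Donagi), with $\alpha$ invertible on primitive parts rationally, and its inverse also given by an algebraic correspondence, so line bundles on $F(X_k)$ produce codimension-2 cycles on $X_k$ with $\Q$-coefficients. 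Then $\rank A(X)_k=23$, which is (i).

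The main obstacle is the transport step in (ii)$\Rightarrow$(i): showing that the inverse of $\alpha$ is induced by an algebraic cycle in characteristic $p\ge 3$, and that Tate for $F(X_k)$ is available with exactly the hypothesis $p\geq 3$. For the first, I would use the relation $\alpha^{t}\circ\alpha = c\cdot\id$ on primitive cohomology, where $\alpha^t$ is the transpose correspondence $P_*$ composed with cup product with $g$. The constant $c$ is computed from intersection numbers ($c=\pm 6$ or similar), so it is nonzero in $\Q$ and inverting it rationally suffices. For the second, I would appeal to Yang's result that the supersingular Tate conjecture holds for cubic fourfolds, quoted through \cite{Yang}, possibly noting that only rational statements are needed, so torsion-freeness (the source of $p\ge7$) is not required here.
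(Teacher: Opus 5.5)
Your proposal is correct in outline, but it takes a different route from the paper. The paper simply invokes the Tate conjecture for cubic fourfolds \cite[Theorem 6.14]{Madapusi} and reads off both directions.

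You do two things differently:
\begin{itemize}
\item You prove (i)$\Rightarrow$(ii) by hand: cycle classes satisfy $\varphi=p^2$ and span, which works for every $p$.
\item You reduce (ii)$\Rightarrow$(i) to the supersingular Tate conjecture for the $\HK$-type variety $F(X_k)$. You then transport the result back along the Abel--Jacobi correspondence and an algebraic quasi-inverse.
\end{itemize}
This is essentially how Charles deduced the Tate conjecture for codimension-$2$ cycles on cubic fourfolds from his result on holomorphic symplectic varieties.

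What your route buys is a clean separation of geometric and arithmetic input. The map $\alpha$ is an isomorphism of isocrystals, and the inverse relation holds, for every $p$: $X_k$, $F(X_k)$ and the universal line lift to $W$, and Beauville--Donagi applies on the generic fibre. All characteristic restrictions therefore come from the theorem you quote for $F(X_k)$.

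The price is that you inherit exactly that theorem's hypothesis. Charles's statement, for instance, is for $p\geq 5$, so your argument as written does not reach $p=3$. Your closing suggestion of quoting ``the supersingular Tate conjecture for cubic fourfolds'' directly would make the Fano detour redundant, since that is precisely the paper's one-line proof.

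One correction is needed in the transport step. Write $\pi_F\colon P\to F(X_k)$ and $\pi_X\colon P\to X_k$. The transpose $\alpha^{t}=\pi_{X*}\pi_F^{*}$ maps $H^6$ of $F(X_k)$ to $H^4$ of $X_k$, so for degree reasons you must cup with $g^2$, not $g$. Set $\beta(\lambda)=\pi_{X*}\pi_F^{*}(\lambda\cdot g^2)$. The following three facts then give $\beta\circ\alpha=-6\cdot\id$ on primitive classes:
\begin{itemize}
\item the polarized Fujiki relation $\int_{F(X_k)} a\,b\,g^2=6\,q_F(a,b)$ for $a,b\perp g$, where $q_F$ is the Beauville--Bogomolov form and $q_F(g)=6$;
\item $q_F(\alpha x,\alpha y)=-\langle x,y\rangle$ for primitive $x,y$;
\item $\alpha(h^2)=g$, which also shows that $\beta(\alpha y)$ is again primitive.
\end{itemize}
These identities hold in characteristic $p$ by the same lifting argument. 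Finally, $\beta(c_1(L))$ is the class of the honest cycle $\pi_{X*}\bigl(\pi_F^{*}(c_1(L)\cdot g^2)\bigr)$. Hence $\rho(F(X_k))=23$ does force $\rank A(X)_k=23$.
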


\begin{proof}
It follows from Tate conjecture for cubic fourfold. cf. \cite[Theorem 6.14]{Madapusi}
\end{proof}

Let $q$ be a power of a prime number $p$ and $X_{\F_q}$ be a cubic fourfold over $\F_q$ and $l \not= p$ be a prime number, and consider the base change and denote it $X_{\overline{\F_q}}$. Then we have a Galois action on $H^4_{\et} (X_{\overline{\F_q}}, \Q_{l}(2))$ by the geometric Frobenius element $\sigma \in \Gal (\overline{\F_{q}}/\F_{q})$. Let $P(T)$ be the characteristic polynomial of this action $\sigma$. Then, the degree of $P(T)$ is $23$ and the coefficient of $P(T)$ is $\Q$ by Weil conjecture.

Consider the roots of $P(T)$ and denote by $\al_1, \dots, \al_{23} \in \overline{\Q}$. Fix the embedding $\overline{\Q} \to \overline{\Q_p}$. Let $\nu : \overline{\Q_p} \to \Q \cup \{\infty\}$  be a $p$-adic valuation normalized by $\nu(p)= 1$. 

\begin{prop}{cf. \cite[Proposition 3.3]{ItoK}}
\label{2.16}

(1)\ The number of $i$ such that $\al_i$ is a root of unity is equal to $\rank A(X_{\overline{\F_{q}}})$. 

(2)\ If $h(X_{\overline{\F_q}}) < \infty$, then the number of $i$ such that $\nu(\al_i)>0$ is equal to $h(X_{\overline{\F_q}})$.

(3)\ If $h(X_{\overline{\F_q}}) = \infty$, then every $\al_i$ is a root of unity. Hence it satisfies $\nu(\al_i) =0$.
\end{prop}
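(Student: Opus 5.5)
We would prove Proposition \ref{2.16} along the lines of \cite[Proposition 3.3]{ItoK}, comparing the $\ell$-adic Frobenius with the crystalline Frobenius. Replacing $q$ by a power only raises each $\al_i$ to that power. This does not change which $\al_i$ are roots of unity, nor the sign of $\nu(\al_i)$. It also leaves $\rank A(X_{\overline{\F_q}})$ unchanged, since the cycles are defined over a finite extension. So we may enlarge $\F_q$ freely.

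By Katz--Messing, $P(T)$ is also the characteristic polynomial of the linearized Frobenius $\Phi=\varphi^{a}$ (where $q=p^a$) on $H^4_{\crys}(X/W(\F_q))\otimes \Q$, twisted by $q^{-2}$ to match the Tate twist $\Q_\ell(2)$. Hence the multiset $\{\nu(\al_i)/a+2\}$ is exactly the set of slopes of the Newton polygon of $H^4_{\crys}(X_{\overline{\F_q}}/W)$, counted with multiplicity. Since the twist $H^4(2)$ is a K3-type crystal, the slopes are normalized around $0$.

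\textbf{Part (2).} If $h=h(X_{\overline{\F_q}})<\infty$, Definition \ref{2.14} gives slopes $2-\frac1h$, $2$, $2+\frac1h$ with multiplicities $h$, $23-2h$, $h$. After the twist these become $-\frac1h$, $0$, $\frac1h$. So exactly $h$ of the $\al_i$ satisfy $\nu(\al_i)>0$. Here one fixes the sign convention: the paper's figure, matched with geometric Frobenius, puts the positive valuations on the slope $2+\frac1h$ part. Otherwise one uses symmetry, since there are also exactly $h$ with negative valuation. This settles (2), up to checking that the normalization of $\nu$ and the use of geometric Frobenius are consistent.

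\textbf{Part (1).} We use the Tate conjecture for cubic fourfolds (\cite[Theorem 6.14]{Madapusi}, already used in Proposition \ref{2.15}). After enlarging $\F_q$, it says that the $\ell$-adic cycle map induces an isomorphism from $A(X_{\overline{\F_q}})\otimes\Q_\ell$ onto the generalized $1$-eigenspace of Frobenius. We also need the semisimplicity of Frobenius on $H^4$, which holds as for K3 type, via the Kuga--Satake or $\HK$ construction of \cite{Yang}. Since the $\al_i$ have absolute value $1$ under every complex embedding and are algebraic integers away from $p$, an $\al_i$ is a root of unity exactly when some power of it equals $1$. Thus after enlarging $q$, the number of roots of unity among the $\al_i$ equals the dimension of the $1$-eigenspace, which is $\rank A$. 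Comparing with the crystalline rank of Definition \ref{2.13} needs the compatibility of crystalline and $\ell$-adic cycle classes under numerical equivalence. We take that as standard.

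\textbf{Part (3).} If $h=\infty$, all slopes equal $2$, so after the twist $\nu(\al_i)=0$ for all $i$. Moreover $X$ is then rank $23$ by Proposition \ref{2.15}, so by (1) all $23$ roots are roots of unity. Alternatively, the $\al_i$ are algebraic integers of absolute value $1$ at all archimedean places and units at $p$ and at every $\ell\neq p$, so Kronecker's theorem applies.

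\textbf{Main obstacle.} The delicate step is (1): we need the Tate conjecture together with semisimplicity (or at least equality of the geometric and algebraic multiplicities of the eigenvalue $1$) for $H^4(2)$ of a cubic fourfold in characteristic $p\geq3$. We would deduce this from the corresponding statements for $H^2$ of $F(X)$ of $\HK$ type, using the Abel--Jacobi-type isomorphism $H^4(X)(1)\supset P^4\simeq P^2(F(X))$ compatibly on both the $\ell$-adic and crystalline sides.
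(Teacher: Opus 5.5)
Your proposal is correct and follows the same route as the paper: (1) comes from the Tate conjecture for cubic fourfolds \cite[Theorem 6.14]{Madapusi}, and (2)--(3) come from the Katz--Messing identification of $\ell$-adic Frobenius eigenvalues with crystalline slopes \cite[Theorem 1]{Kats}, \cite[Proposition 6.17]{Liedtke}. Your extra remarks spell out details the paper's one-line proof leaves implicit: semisimplicity of Frobenius on the generalized $1$-eigenspace (equivalently, homological equals numerical equivalence), the crystalline versus $\ell$-adic rank comparison, and the Kronecker argument for (3).
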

\begin{proof}
The claim of rank follows from the Tate conjecture for cubic fourfolds \cite[Theorem 6.14]{Madapusi}. For height, see \cite[Proposition 6.17]{Liedtke} and \cite[Theorem 1]{Kats}. 
\end{proof}

We can define \emph{Artin invariant} of supersingular cubic fourfold. We assume $p \geq 7$.
\begin{dfn}
\label{2.17}
Let $X_k$ be a supersingular cubic fourfold. Then, the map 
\[
\cl^2_{\crys}: \CH^2 (X_k) \otimes \Z_p \overset{\sim}\to H^4_{\crys}(X_k/W)^{F=p^2}
\]
is isomorphic \cite[Proposition (5.2.2)]{Yang}. Since $H^4_{\crys} (X_{k}/W (1))$ has the structure of a supersingular K3 crystal, we can consider the discriminant group of this $\Z_p$-lattice, which is isomorphic to $(\Z/p\Z)^{2\sigma_0}$ (cf. \cite[Proposition 3.13]{Ogus}). We call this integer $\sigma_0$ as Artin invariant of $X_k$. 
\end{dfn}

\begin{exa}
\label{2.18}
The Fermat type cubic fourfold 
\[
X_{0}^3+X_{1}^{3}+X_{2}^3+X_{3}^{3}+X_{4}^3+X_{5}^{3} = 0
\]
is the supersingular cubic fourfold with $\sigma_0 = 1$ if $p \equiv -1 \pmod {3}$ \cite[Theorem 2.10, Proposition 3.5, Remark 3.8]{Shiodakatsura} and \cite[Lemma (5.2.6), Lemma (5.2.8), Example]{Yang}.
\end{exa}

\section{Proof of Theorem \ref{1.3}}
\label{sec3}
Let $\pi_{v}$ be a uniformizer of $K_v$. Since Artin reciprocity map sends $x := (1,\dots,1,\pi_v, 1 ,\dots,1) \in \A^{\times}_K$ to the lifts of geometric Frobenius, we have 
\[
\Art_{K}: \A^{\times}_{K} \to \Gal (K^{\ab}/K) ; \ \ \ \ x \mapsto \sigma|_{K^{\ab}},
\]
where $\sigma \in \Gal (\overline{K_v}/K_v)$. And $x$ maps to $z \in G(\A_f)$ where
\[
z = 
\begin{cases}
 (1,\dots,1, \Nm_{K_v/E_{\p}}(\pi_{v}) ^{-1},c \Nm_{K_v/E_{\p}} (\pi_v), 1 ,\dots,1)  & (\q: \text{split}) \\
 (1,\dots,1,c \Nm_{K_v/E_{\p}} (\pi_v) \Nm_{K_v/E_{\p}}(\pi_{v}) ^{-1}, 1 ,\dots,1) & (\q: \text{not split} )
\end{cases}
\]
Using Rikuto Ito's theorem \ref{2.9}, there exists $\eta \in G(\Q) \subset E^{\times}$ such that 
\[
\rho (\sigma|_{K^{\ab}}) = z \eta \in G(\A_f).
\]
By Proposition \ref{3.1}, it suffices to obtain the information of the roots of $\sigma$ to know the height. However, conjugates of $\eta$ can be described by whether $\q$ splits. 

\begin{lem}{\cite[Lemma 4.1]{ItoK}}
\label{3.1}

Fix an embedding $\overline{\Q} \inj \overline{\Q}_p$. Let $\nu : \overline{\Q}_p \to \Q \cup \{\infty\}$ be the $p$-adic valuation normalized by $\nu(p) = 1$ and $\Q(\eta)$ be the subfield generated by $\eta$ and $\p_0$ be the finite place of $\Q (\eta)$ below $\p$.

(1)\ If $\q$ splits in $E$, then the element $\eta$ is not a root of unity and the number of conjugations $\xi \in \overline{\Q}$ of $\eta$ over $\Q$ with $\nu(\xi) >0$ is equal to $[\Q(\eta)_{\p_0} : \Q _p]$. 

(2)\ If $\q$ does not split in $E$, then the element $\eta$ is a root of unity. 
\end{lem}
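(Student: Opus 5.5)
The idea is to read off the $p$-adic and archimedean absolute values of $\eta$ from the identity $\rho(\sigma|_{K^{\ab}}) = z\eta$ in $G(\A_f)$. We combine this with the fact that $\rho(\sigma)$ acts on $\ell$-adic cohomology with good integrality properties away from $p$, by the potential good reduction and smooth proper base change.

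\textbf{Step 1: $\eta$ is an algebraic integer up to $p$.} Fix a prime $\ell \neq p$. Since $\mathscr{X}$ is smooth proper over $\mathscr{O}_{K_v}$, the inertia at $v$ acts trivially on $H^4_{\et}(X_{\overline{K}},\Z_\ell(2))$ (after enlarging $K_v$). The Frobenius $\sigma$ then preserves the lattice $T(X_{\C})\otimes\Z_\ell$ up to finite index. Hence the $\ell$-component of $\rho(\sigma)$ lies in a compact subgroup, and so does every component of $z$ away from the places above $p$, since there $z$ has component $1$. Therefore $\eta$ is a unit at every finite place of $E$ not above $p$. Moreover $\eta c(\eta) = 1$, so every complex absolute value of every conjugate of $\eta$ equals $1$, because $E$ is CM and complex conjugation commutes with the embeddings.

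\textbf{Step 2: local computation at $p$.} At the places of $E$ above $p$, the components of $z\eta$ are $p$-adic units. This follows from the same compactness argument, applied to $\ell=p$ with $p$-adic étale cohomology, or via the crystalline comparison; alternatively, one can reduce to checking $\ell\ne p$ only, working directly with the explicit $z$. This forces $\nu_{\p'}(\eta) = -\nu_{\p'}(z)$ for each place $\p'$ above $p$.

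In the non-split case, the single place $\p$ above $\q$ satisfies $\p = c(\p)$. The component $c\Nm(\pi_v)\Nm(\pi_v)^{-1}$ is then a unit, so $\eta$ is a unit at all finite places and has all archimedean absolute values $1$. By Kronecker's theorem $\eta$ is a root of unity, which proves (2).

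In the split case, $\q = \p\,c(\p)$ with $\p\ne c(\p)$. The component of $z$ at $\p$ is $\Nm_{K_v/E_\p}(\pi_v)^{-1}$, which has positive valuation, while the component at $c(\p)$ has the opposite valuation. Hence $\nu_{\p}(\eta)>0$, so $\eta$ is not a root of unity.

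\textbf{Step 3: counting conjugates.} The conjugates $\xi$ of $\eta$ over $\Q$ correspond to the embeddings $\Q(\eta)\inj\overline{\Q}_p$. They are grouped by the places of $\Q(\eta)$ above $p$, and each place $\p_0'$ contributes $[\Q(\eta)_{\p_0'}:\Q_p]$ embeddings. We have $\nu(\xi)>0$ exactly when the corresponding place $\p_0'$ has $\eta$ of positive valuation. Among places of $E$ above $p$, the valuation of $\eta$ is positive only at $\p$, with $\p$ taken as the place determined by $v$ and the fixed embedding. Restricting to $\Q(\eta)\subset E$, the only place with positive valuation is therefore $\p_0$. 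We must check that no other place of $E$ above $\p_0$ carries negative valuation: if two places of $E$ above $\p_0$ had valuations of opposite sign, this would contradict the fact that $\eta\in\Q(\eta)$ has a single valuation at $\p_0$. Here $c(\p)$ lies over $c(\p_0)$, and $c(\p_0)\neq\p_0$ because their valuations differ. The count is then exactly $[\Q(\eta)_{\p_0}:\Q_p]$.

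\textbf{Main obstacle.} The hardest point is Step 2: justifying that $z\eta$ has unit components at $p$, and hence determining $\nu(\eta)$ exactly. It is cleaner to avoid $p$-adic cohomology altogether by using the product formula. Step 1 shows that $\eta$ is a unit outside $p$ and has absolute value $1$ at infinity. Any residual ambiguity at $p$ is then constrained by the relation $\nu_{c(\p')}(\eta) = -\nu_{\p'}(\eta)$, which follows from $\eta c(\eta)=1$. Combined with the explicit form of $z$, which is the identity at every place above $p$ other than $\p$ and $c(\p)$, this pins down the valuations, following \cite[Lemma 4.1]{ItoK}.
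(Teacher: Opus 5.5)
Your core argument is correct and is essentially the proof of \cite[Lemma 4.1]{ItoK}, to which the paper defers. The structure is:
\begin{enumerate}
\item obtain $(z\eta)_\lambda\in\mathscr{O}_{E_\lambda}^\times$ at every finite place $\lambda$ of $E$;
\item use $\eta c(\eta)=1$ to get $|\tau(\eta)|=1$ for every embedding $\tau\colon E\inj\C$;
\item conclude by Kronecker in the non-split case;
\item in the split case, read off $\nu_{\p}(\eta)>0$ and $\nu_{c(\p)}(\eta)<0$, note that $\eta$ is a unit elsewhere, and count embeddings $\Q(\eta)\inj\overline{\Q}_p$ place by place.
\end{enumerate}
Your Step 3 is fine; note that $c$ preserves $\Q(\eta)$ because $c(\eta)=\eta^{-1}$, which justifies speaking of $c(\p_0)$.

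However, Step 2 is not an obstacle, and the replacement you propose for it does not work. Since $\rho$ is continuous on the compact group $\Gal(K^{\ab}/K)$, the projection of its image to each $E_\lambda^\times$ is a compact subgroup. It therefore lies in $\mathscr{O}_{E_\lambda}^\times$ for every finite $\lambda$, including those above $p$. Equivalently, Galois preserves the lattice $T(X_{\C})\otimes\Z_p$ in $p$-adic \'etale cohomology. This single observation replaces both Step 1, where good reduction and base change are superfluous, and Step 2.

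The product-formula route cannot do this job. The conditions $\eta\in\mathscr{O}_E[1/p]^\times$, $|\tau(\eta)|=1$ and $\nu_{c(\p')}(\eta)=-\nu_{\p'}(\eta)$ fail to pin down $\eta$ in two ways:
\begin{itemize}
\item They are satisfied by $\eta=1$, so in the split case they cannot show that $\eta$ is not a root of unity.
\item Let $\p'\neq\p,c(\p)$ lie over another prime of $F$ above $p$ that splits in $E$, and let $\alpha$ generate a power of $\p'$. Then $\eta=\alpha/c(\alpha)$ also satisfies these conditions, so they cannot show that $\eta$ is a unit at $\p'$.
\end{itemize}
The explicit shape of $z$ says nothing about $\eta$ until you know that $z\eta$ is a unit above $p$, which is exactly what you wanted to avoid proving.

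Finally, fix a sign slip: $\Nm_{K_v/E_{\p}}(\pi_v)^{-1}$ has \emph{negative} valuation at $\p$. It is this, together with $\nu_{\p}(\eta)=-\nu_{\p}(z_{\p})$, that gives $\nu_{\p}(\eta)>0$.
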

\begin{proof}
See \cite[Lemma 4.1]{ItoK}. 
\end{proof}

So we have to compute the eigenvalue of $\sigma$ acting on
\[
H^4_{\et}(X_{\overline{K_v}},\Q_l (2)) \simeq (A(X_{\C}) \otimes \Q_{l}) \oplus (T(X_{\C}) \otimes \Q_l (2)). 
\]

Since $T(X_{\C}) \otimes \Q$ is a 1-dim $E$-vector space, the characteristic polynomial of $\sigma$ acting on $T(X_{\C}) \otimes \Q_l$ can be identified with the characteristic polynomial of the $\Q$-linear map $E \to E; x \mapsto x \eta$. So, it is equal to $f(t)^{[E: \Q(\eta)]}$, where $f(t)$ is a minimal polynomial of $\eta$ over $\Q$.

If $\q$ splits in $E$, no root of $f(t)$ is a root of $1$. Let $\p_0 (\not= \p)$ be the finite place of $\Q(\eta)$ below $v$. We have
\[
[E: \Q(\eta)] = [E_{\p}: \Q(\eta)_{\p_0}].
\]

Since the element $\eta$ is not a root of unity and the number of conjugates $\xi \in \overline{\Q}$ of $\eta$ over $\Q$ with $\nu (\xi)> 0 $ is equal to $[\Q(\eta)_{\p_0} : \Q_p]$, the number of roots $\xi \in \overline{\Q}$ of $f(t)^{[E: \Q(\eta)]}$ with $\nu (\xi)>0$ is counted with their multiplication is equal to 
\[
[E: \Q(\eta)][\Q(\eta): \Q] = [E_{\p}: \Q(\eta)_{\p_0}][\Q(\eta)_{\p_0}:\Q_p ] = [E_{\p}: \Q_p].
\]

The eigenvalues of $\sigma$ acting on 
\[
A(X_{\C}) \otimes \Q_{l} \simeq A(X_{\overline{K_v}}) \otimes \Q_l
\]
are roots of $1$. Therefore, the number of eigenvalues $\xi \in \overline{\Q}$ of $\sigma$ acting on $H^4_{\et} (X_{\overline{K_v}},\Q_l (2))$ which are roots of $1$ is  equal to the rank of $A(X_{\C})$, counted with their multiplicities. Moreover, the number of eigenvalues $\xi \in \overline{\Q}$ of $\sigma$ acting on $H^4_{\et} (X_{\overline{K_v}},\Q_l (2))$ with $\nu (\xi) > 0$ is equal to $[E_{\p}:\Q_p]$, counted with their multiplicities. It coincides with the height of $\mathscr{X}_{\overline{v}}$.

If $\q$ does not split in $E$, every root of $f(t)$ is a root of $1$. Therefore, every eigenvalue of $\sigma$ acting on $H^4_{\et} (X_{\overline{K_v}},\Q_l (2))$ is a root of $1$. It follows that the rank of $\mathscr{X}_{\overline{v}}$ is equal to $23$. i.e., $\mathscr{X}_{\overline{v}}$ is supersingular. 

The proof of Theorem \ref{1.3} is complete.

\section{Preliminaries 2}
\label{sec4}
We keep the notation in the previous sections.

\subsection{Unramifiedness of $E_{\p}/F_{\q}$}
\label{sec4.1}

\begin{rmk}
\label{4.1}
If we replace $H^2_{\et} (S_{\overline{K_v}},\Z_p)$ in the case of the K3 surface with $H^4_{\et} (X_{\overline{K_v}},\Z_p)$ in the case of the cubic fourfold, the content of this section is almost described by \cite{ItoK}. Thus, we use similar method of \cite{ItoK}.
\end{rmk}

First, We may assume that the extension $E_{\p}/F_{\q}$ is unramified like \cite[Lemma 2.4]{ItoK}. To prove that, we can prove \cite[Lemma 2.2]{ItoK} for the cubic fourfold with CM.

\begin{lem}{\cite[Lemma 2.2]{ItoK}}
\label{4.2}
We assume that the isomorphism $E(X_{\C}) \simeq E$ induces an isomorphism
\[
\mathrm{End}_{\Hdg}(T(X_{\C})) \otimes_{\Z} \Z_{(p)} \simeq \mathscr{O}_{E}\otimes_{\Z} \Z_{(p)}. 
\]
Then $T(X_{\C})\otimes_{\Z} \Z_{(p)}$ is a free $\mathscr{O}_{E}\otimes_{\Z} \Z_{(p)}$-module of rank $1$. Moreover, if we fix an isomorphism of $\mathscr{O}_{E}\otimes_{\Z} \Z_{(p)}$-modules
\[
T(X_{\C})\otimes_{\Z} \Z_{(p)} \simeq \mathscr{O}_{E}\otimes_{\Z} \Z_{(p)}, 
\]
there exists an element $d \in F^{\times}$ such that the intersection product on $T(X_{\C})\otimes_{\Z} \Z_{(p)}$ is translated to the pairing 
\[
\langle\ ,\ \rangle : \mathscr{O}_{E}\otimes_{\Z} \Z_{(p)} \times\mathscr{O}_{E}\otimes_{\Z} \Z_{(p)} \to \Z_{(p)}
\]
defined by $\langle x, y \rangle=\mathrm{Tr}_{E/\Q}(dxc(y))$ for every $x, y \in \mathscr{O}_{E}\otimes_{\Z} \Z_{(p)}$.
\end{lem}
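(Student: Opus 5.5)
We need to prove that $T(X_{\C})\otimes\Z_{(p)}$ is free of rank $1$ over $\mathscr{O}_E\otimes\Z_{(p)}$, and that the intersection form becomes a trace form. The plan is to copy the argument of \cite[Lemma 2.2]{ItoK}. That argument uses only three inputs: $T$ is a lattice carrying a nondegenerate symmetric bilinear form, Hodge endomorphisms act on $T$, and $T\otimes\Q$ is a $1$-dimensional $E$-vector space. Set $R:=\mathscr{O}_E\otimes_\Z\Z_{(p)}$ and $M:=T(X_{\C})\otimes_\Z\Z_{(p)}$.

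\emph{Step 1: freeness.} By hypothesis $R$ acts on $M$, so $M$ is an $R$-module. It is finitely generated and torsion free, since $M$ is a free $\Z_{(p)}$-module of finite rank and $M\otimes\Q\simeq E$. The ring $R$ is a semilocal Dedekind domain (the localization of $\mathscr{O}_E$ at the multiplicative set $\Z\setminus p\Z$), so it is a PID. Hence $M$ is projective. Since $M\otimes_{\Z_{(p)}}\Q$ is $1$-dimensional over $E=\Frac R$, $M$ has rank one and is therefore free: $M\simeq R$.

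\emph{Step 2: the Rosati-type compatibility.} Next I would show that for $e\in E$ and $x,y\in T\otimes\Q$ we have $(ex,y)=(x,c(e)y)$, where $(\ ,\ )$ is the intersection form on $T(X_{\C})$. The standard argument runs as follows. The form is a morphism of Hodge structures $T\otimes T\to\Q$, so the adjoint $e'$ of a Hodge endomorphism $e$ is again a Hodge endomorphism. Thus $e\mapsto e'$ is an involution of the field $E$. The endomorphism $e$ acts on $H^{3,1}$ by the scalar $e$ (via the fixed embedding $E\subset\C$). Since $(\ ,\ )$ pairs $H^{3,1}$ with $H^{1,3}=\overline{H^{3,1}}$, the adjoint $e'$ acts on $H^{3,1}$ by $\bar e=c(e)$. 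Hence $e'=c(e)$. Here one uses that CM fields have a complex conjugation independent of the embedding, and that the Hodge–Riemann form restricted to $T$ is nondegenerate on $H^{3,1}\oplus H^{1,3}$.

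\emph{Step 3: the trace form.} Fix the isomorphism $M\simeq R$ and transport the form. The map $(x,y)\mapsto (x,y)$ is $\Q$-bilinear on $E$ and satisfies $(ex,y)=(x,c(e)y)$. The trace pairing $\Tr_{E/\Q}(ab)$ is nondegenerate, so there is a unique $d\in E$ with $(x,1)=\Tr_{E/\Q}(dx)$ for all $x$. Then Step 2 gives $(x,y)=(xc(y),1)=\Tr_{E/\Q}(dxc(y))$. Symmetry of the form, $(x,y)=(y,x)$, implies $\Tr(dxc(y))=\Tr(c(d)xc(y))$ for all $x,y$, hence $d=c(d)$, i.e. $d\in F$. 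The element $d$ is nonzero by nondegeneracy, so $d\in F^\times$. The values lie in $\Z_{(p)}$ automatically, because the form is integral on $T$.

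I expect the main obstacle to be Step 2: justifying that the adjoint involution equals complex conjugation in the cubic fourfold setting. It must be checked that the relevant polarization on $T(X_{\C})$ (the intersection form, up to sign) is a polarization of the weight-$0$ Hodge structure and pairs $H^{3,1}$ nontrivially with $H^{1,3}$. This is exactly as in the K3 case after twisting by $\Z(2)$, and Zarhin's results \cite{Zarhin} cover it.
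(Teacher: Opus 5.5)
Your proposal is correct and follows the same route as the paper, which only defers to \cite[Lemma 2.2]{ItoK} and \cite[Proposition 1.3.10]{Chen}. Freeness comes from $\mathscr{O}_E\otimes\Z_{(p)}$ being a semilocal Dedekind domain, hence a PID, and the trace-form description comes from the adjoint involution of the intersection form on $T(X_{\C})\otimes\Q$ being complex conjugation. You also supply details the paper leaves implicit: the form pairs $H^{3,1}$ perfectly with $H^{1,3}$, and symmetry forces $d=c(d)$.
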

\begin{proof}
Since \cite[Proposition 1.3.10]{Chen}, which is used in \cite[Lemma 2.2]{ItoK}, also holds when the assumption is replaced to cubic fourfold. 
\end{proof}

\begin{prop}
\label{4.3}
Under the assumptions of Theorem \ref{1.4}, the quadratic extension $E_{\p}/F_{\q}$ is unramified.
\end{prop}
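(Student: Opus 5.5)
We argue by contradiction, following \cite[Lemma 2.4]{ItoK}: assume $\q$ ramifies in $E$, and show that $p$ then divides $\disc \CH^2(X_{\C})$, against the first hypothesis of Theorem \ref{1.4}. Under the assumptions of Theorem \ref{1.4}, $\q$ does not split, so $E_{\p}/F_{\q}$ is a quadratic field extension that is either unramified or ramified. The bridge between the transcendental lattice and the algebraic lattice is the unimodularity of $H^4(X_{\C},\Z)$. Since $A(X_{\C})$ and $T(X_{\C})$ are mutually orthogonal primitive sublattices of a unimodular lattice, their discriminant groups are isomorphic (up to sign of the form). In particular
\[
|\disc A(X_{\C})| = |\disc T(X_{\C})|,
\]
so $p \nmid \disc \CH^2(X_{\C})$ forces $T(X_{\C})\otimes_{\Z}\Z_{(p)}$ to be unimodular over $\Z_{(p)}$. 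This step is purely lattice-theoretic and carries over verbatim from the K3 case. The only input is that $\cl^2$ identifies $\CH^2(X_{\C})$ with $A(X_{\C})$, as recalled in Section \ref{sec2.1}.

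Next we use Lemma \ref{4.2} to write $T(X_{\C})\otimes\Z_{(p)} \simeq \mathscr{O}_E\otimes\Z_{(p)}$ with pairing $\langle x,y\rangle = \Tr_{E/\Q}(d\,x\,c(y))$ for some $d\in F^\times$. We then complete at $p$. The pairing decomposes as an orthogonal sum over the places of $F$ above $p$, so its restriction to the $\q$-component $\mathscr{O}_{E_{\p}}$ is
\[
(x,y)\mapsto \Tr_{F_{\q}/\Q_p}\bigl(d\,\Tr_{E_{\p}/F_{\q}}(x\,c(y))\bigr),
\]
and this restriction must itself be unimodular over $\Z_p$. Its dual lattice is $d^{-1}\mathfrak{D}_{E_{\p}/\Q_p}^{-1}$, computed through the transitivity of traces and the different. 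Unimodularity therefore means that the fractional ideal $d\,\mathfrak{D}_{E_{\p}/\Q_p}$ of $\mathscr{O}_{E_{\p}}$ equals $\mathscr{O}_{E_{\p}}$. Here we use that $c$ preserves $\mathscr{O}_{E_{\p}}$, since $\p$ is the unique prime over $\q$.

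Now $\mathfrak{D}_{E_{\p}/\Q_p} = \mathfrak{D}_{E_{\p}/F_{\q}}\,\mathfrak{D}_{F_{\q}/\Q_p}\mathscr{O}_{E_{\p}}$. Since $d\in F_{\q}^\times$, both $d$ and $\mathfrak{D}_{F_{\q}/\Q_p}$ generate ideals extended from $F_{\q}$. If $E_{\p}/F_{\q}$ is ramified, such extended ideals have even $\p$-adic valuation, because the ramification index is $2$. So the condition reduces to $v_{\p}(\mathfrak{D}_{E_{\p}/F_{\q}})$ being even. But in a ramified quadratic extension this valuation is at least $1$, and it is odd. We see this by writing $\mathscr{O}_{E_{\p}}=\mathscr{O}_{F_{\q}}[\pi]$ for a uniformizer $\pi$ with Eisenstein minimal polynomial $g$. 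The different is generated by $g'(\pi)=\pi-c(\pi)$, and $v_{\p}(\pi - c(\pi)) = i_G(c)+1$, where $i_G(c)$ is the lower ramification jump. For a quadratic extension this jump is odd when $p=2$ and equals $1$ when $p$ is odd (tame). Since $p\ge 7$, we are in the tame case, the different is exactly $\p$, and the parity contradiction is immediate. Hence $E_{\p}/F_{\q}$ is unramified.

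The main obstacle is not the local computation but checking that Lemma \ref{4.2} and the unimodular-lattice argument really transfer to cubic fourfolds. The relevant lattice is the full $H^4(X_{\C},\Z)$, unimodular by Poincaré duality, rather than the primitive cohomology; this distinction matters because the primitive lattice has discriminant $3$, which would also be harmless since $p\ge 7$. We also need $A(X_{\C})$ to be primitive, which holds because $A(X_{\C}) = H^{2,2}\cap H^4(X_{\C},\Z)$. With these checked, the argument is the one of \cite[Lemma 2.4]{ItoK}.
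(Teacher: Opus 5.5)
Your proposal is correct and follows essentially the paper's argument: $p\nmid\disc \CH^2(X_{\C})$ together with Lemma \ref{4.2} makes $(x,y)\mapsto \Tr_{E_{\p}/\Q_p}(d\,x\,c(y))$ perfect on $\mathscr{O}_{E_{\p}}$, so $d^{-1}$ generates $\mathcal{D}_{E_{\p}/\Q_p}$. Transitivity of the different and $v_{\p}(\mathcal{D}_{E_{\p}/F_{\q}})=1$ in the tame case ($p>2$) then give the parity contradiction; you additionally spell out the unimodularity of $T(X_{\C})\otimes\Z_{(p)}$ and the orthogonal splitting over places of $F$, which the paper leaves implicit. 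One harmless slip: in the tame case the lower ramification jump is $0$, not $1$ (what equals $1$ is $v_{\p}(\pi-c(\pi))$), and your side claim that the jump is always odd for $p=2$ fails for $\Q_2(\sqrt{2})/\Q_2$, but neither affects the case $p\ge 7$.
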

\begin{proof}
Using the Lemma \ref{4.2} and the assumption that $\disc A(X_{\C})$ is not divisible by $p$, $\mathscr{O}_{E} \otimes_{\Z} \Z_{(p)}$ has a (perfect) pairing $\langle\ , \ \rangle$, there is a decomposition 
\[
\mathscr{O}_{E} \otimes_{\Z} \Z_p \simeq \prod_{\p' | p} \mathscr{O}_{E_{\p'}},
\]
where $\p'$ is a finite place of $E$ above $p$. Therefore, we have a perfect pairing 
\[
\langle\ , \ \rangle_{\p} := \mathscr{O}_{E_{\p}} \times \mathscr{O}_{E_{\p}} \to \Z_p; \ \ \ \ (x,y) \mapsto \Tr_{E_{\p}/\Q_p}(dxc(y))
\]
where $c : E_{\p} \simeq E_{\p}$ is the involution induced by complex conjugation on $E$. 

Let $\mathcal{D}_{A/B}$ be the different of $A$ over $B$. Since $\langle\ , \ \rangle_{\p}$ is perfect, we see that $\mathcal{D}_{E_{\p}/\Q_p}$ is generated by $d^{-1} \in F^{\times}$ as an ideal of $\mathscr{O}_{E_{\p}}$. 

We assume that $E_{\p}/F_{\q}$ is totally ramified. Consider $\nu_{\p}: E_{\p}^{\times} \to \Z$ to be the valuation normalized $\nu_{\p} (\pi_{\p}) = 1$. A generator $e \in \mathscr{O}_{E_{\p}}$ of $\mathcal{D}_{E_{\p}/F_{\q}}$ satisfies $\nu_{\p}(e) = 1$ because $p>2$ \cite[Chapter 3, Proposition 13]{Serrelf}. By \cite[Chapter 3, Proposition 8]{Serrelf}, we have 
\[
\mathcal{D}_{E_{\p}/\Q_p} = \mathcal{D}_{E_{\p}/F_{\q}} \mathcal{D}_{F_{\q}/\Q_p} 
\]
as an ideals of $\mathscr{O}_{E_\p}$. Let $f \in \mathscr{O}_{F_{\q}}$ be a generator of $\mathcal{D}_{F_{\q}/\Q_p}$. Then we have 
\[
\nu_{\p} (d^{-1}) = \nu_{\p} (e) +\nu_{\p} (f) = 1+ \nu_{\p} (f).
\]
Since $d^{-1},f \in \mathscr{O}_{F_{\q}}$ and $E_{\p}/F_{\q}$ are totally ramified, $\nu_{\p}(d^{-1}),\nu_{\p}(f)$ are even. This is a contradiction.
\end{proof}

\subsection{Integral $p$-adic $\mathrm{\etale}$ cohomology}
\label{sec4.2}
In this section, we will describe the Galois module $H^4_{\et} (X_{\overline{K_v}}, \Z_p)$.  We assume that $X_{\C}$ is a cubic fourfold over $\C$ with CM field by $E$ satisfying the assumptions of Theorem \ref{1.4}, i.e. its discriminant is not divisible by $p$, and the isomorphism $E(X_{\C}) \simeq E$ induces an isomorphism
\[
\mathrm{End}_{\mathrm{Hdg}}(T(X_{\C}))\otimes_{\Z}{\Z_{(p)}}\simeq \mathscr{O}_{E}\otimes_{\Z}{\Z_{(p)}}.
\]

 In particular, suppose that $X_K$ has potential good reduction at $v$ and the reduction $\mathscr{X}_{\overline{v}}$ is a supersingular cubic fourfold over $\overline{k(v)}$. Thus, we assume that $\q$ does not split in $E$ (see Theorem \ref{1.3}).  

We shall define \emph{Lubin-Tate character}. For details,  see \cite[section 5]{ItoK}. Fix a uniformizer $\pi_{\p}$ of $E_{\p}$, and a map $f : E_{\p} \inj K_{v}$ that extends to $\overline{E_{\p}} \simeq \overline{K_v}$. Then, we define $\chi_f $ by taking the following composition 
\[
\chi_{\pi_{\p},f} : \Gal (\overline{K_v}/K_v) \overset{f}\to \Gal (E^{\ab}_{\p}/E_{\p}) \simeq \mathscr{O}^{\times}_{E_{\p}} \times \widehat{\Z} \overset{\pr}\to \mathscr{O}^{\times}_{E_{\p}}.
\]
This does not depend on the choice of the isomorphism $\overline{E_{\p}} \simeq \overline{K_v}$.

\begin{dfn}
\label{4.4}
We denote the $\Gal (\overline{K_{v}}/K_{v})$-module $\mathscr{O}_{E_{\p}}$ arising from a character $\chi : \Gal (\overline{K_{v}}/K_{v}) \to \mathscr{O}_{E_{\p}}^{\times}$ by  $(\mathscr{O}_{E_{\p}},\chi)$. 
\end{dfn}

Next, we define a character $\widetilde{\chi}$ which is required to compute $H^4_{\et} (X_{\overline{K_v}}, \Z_p)$ as a $\Gal (\overline{K_v}/K_v)$-module via $\Z_p$. Let
\[
\chi_{\cyc}: \Gal (\overline{K_v}/K_v) \to \Z^{\times}_{p}
\]
be the $p$-adic cyclotomic character. A free $\Z_p$-module of rank $1$ on which $\Gal (\overline{K_v}/K_v)$ acts via $\chi^{n}_{\cyc}$ is denoted by $\Z_p (n)$. If  $n=0$, then we put $\Z_p (0) := \Z_p$.

\begin{dfn}
\label{4.5}
Let $\iota: E_{\p} \inj K_v$ be an embedding induced by $E \subset K \subset \C$, and $c: E_{\p} \to E_{\p}$ the isomorphism induced from the complex conjugation on $E$.  Then, we have characters $\chi_{\iota},\chi_{\iota \circ c}$ given by the Definition \ref{4.2}. Let $\chi_{\cyc}$ be the $p$-adic cyclotomic character. We define the character $\widetilde{\chi}: \Gal (\overline{K_v}/K_v) \to \mathscr{O}^{\times}_{E_{\p}}$ by
\[
\widetilde{\chi}: = \chi^{-1}_{\iota} \chi_{\iota \circ c} \chi^{-2}_{\cyc}.
\]
\end{dfn}

\begin{rmk}
\label{4.6}
The exponent of $\chi_{\cyc}$ in the definition of $\widetilde{\chi}$ differs from the one used by Kazuhiro Ito because we are considering $H^4$ rather than $H^2$. Since the image of $\CH^2 (X_k) \otimes W$ is a $\Z_p$-lattice, and we want to know the discriminant of $H^4_{\crys}(X_k /W)^{F=p^2}$, the definition must be modified for cubic fourfolds. Thus, we will compute a part which become a lattice via $F=p,p^2,p^3$ to know  its Artin invariant. 
\end{rmk}

In this section, we assumed that $X_k$ has good reduction and the reduction $\mathscr{X}_{\overline{v}}$ becomes supersingular cubic fourfold over $\overline{k(v)}$. To prove Theorem \ref{1.4}, we may replace $K$ by a finite extension. Hence we may assume
\[
A (X_{\C} ) \simeq A (X_K) \simeq A (X_{K_v}),
\]
\[
A (\mathscr{X}_{\overline{v}}) \simeq A (\mathscr{X} \otimes_{\mathscr{O}_{K_v}} k(v)).
\]

Under these assumptions, the action of $\Gal (\overline{K_v}/K_v) $ on $H^4_{\et} (X_{\overline{K_v}},\Z_p)$ is described as follows.

\begin{prop}
\label{4.7}
There is an isomorphism of $\Gal (\overline{K_v}/K_v)$-modules 
\[
H^4_{\et}(X_{\overline{K_v}},\Z_p) \simeq \Z_p (-2)^{\oplus23-[E_{\p}:\Q_p]} \oplus (\mathscr{O}_{E_{\p}},\widetilde{\chi}). 
\]
\end{prop}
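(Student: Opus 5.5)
We follow the strategy of \cite[Section 5]{ItoK}, replacing $H^2(1)$ by $H^4(2)$. First we split off the algebraic part integrally. The cycle class map identifies $A(X_{\C})\otimes\Z_p$ with a $\Z_p$-sublattice of $H^4_{\et}(X_{\overline{K_v}},\Z_p(2))$. After the finite extension of $K$ made above, $\Gal(\overline{K_v}/K_v)$ acts trivially on it. Since $p\nmid\disc A(X_{\C})$, the lattice $H^4(X_{\C},\Z)\otimes\Z_p$ decomposes orthogonally as
\[
(A(X_{\C})\otimes\Z_p)\oplus(T(X_{\C})\otimes\Z_p),
\]
and the decomposition is Galois-stable via the comparison isomorphism. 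Untwisting by $\Z_p(-2)$, we get the summand $\Z_p(-2)^{\oplus\rank A(X_{\C})}$ of $H^4_{\et}(X_{\overline{K_v}},\Z_p)$.

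We next need $\rank A(X_{\C})=23-[E:\Q]$. This holds because $T(X_{\C})\otimes\Q$ is a $1$-dimensional $E$-vector space and $b_4^{\mathrm{prim}}$ together with the hyperplane class gives total rank $23$. It then remains to show that
\[
T(X_{\C})\otimes\Z_p\simeq \prod_{\p'|p}\mathscr{O}_{E_{\p'}}
\]
(by Lemma \ref{4.2}) contributes $\Z_p(-2)$ for each $\p'\neq\p$ and $(\mathscr{O}_{E_\p},\widetilde\chi)$ at $\p$. Here we must be careful: either $\p$ is the unique place of $E$ above $\q$ and the other factors correspond to places of $F$ other than $\q$, or we must refine the statement's count. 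I will interpret the exponent $23-[E_\p:\Q_p]$ as absorbing the factors $\mathscr{O}_{E_{\p'}}$ with $\p'\nmid\q$ after showing those are twisted-trivial.

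The Galois action on $T(X_{\C})\otimes\Z_p(2)$ is computed by Theorem \ref{2.9}. The representation $\rho$ lands in $G(\A_f)\subset\A_{E,f}^\times$, and its $p$-component acts on $\prod_{\p'}\mathscr{O}_{E_{\p'}}$ coordinatewise by multiplication; it preserves the integral lattice because $\mathrm{End}_{\Hdg}$ is $\mathscr{O}_E\otimes\Z_{(p)}$. Restricted to the decomposition group at $v$, we evaluate on local id\`eles $x\in K_v^\times$. The diagram gives $\rho(\Art_K(x))=z(x)\,\eta(x)$ with $\eta(x)\in G(\Q)$, where $z(x)$ is given by $c(\Nm(x))\Nm(x)^{-1}$ at places above $p$ and is trivial elsewhere.

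The key point is to show that $\eta$ is trivial on an open subgroup. Choose $x\in\mathscr{O}_{K_v}^\times$ close to $1$. Then $z(x)$ is trivial away from $p$, so $\eta(x)$ lies in $G(\Q)$ and is a unit at all $\ell\neq p$. It also lies in a compact set at $p$, hence belongs to a finite group of roots of unity, and continuity makes it $1$ near $1$. After a further finite extension (allowed, as in the reduction to $A(X_{\C})\simeq A(X_{K_v})$), the inertia action on the $\p$-factor is therefore $\Nm(x)^{-1}c(\Nm(x))$. Via local class field theory this is exactly $\chi_\iota^{-1}\chi_{\iota\circ c}$ on inertia, and on the remaining factors $\mathscr{O}_{E_{\p'}}$, $\p'\nmid\q$, it is trivial. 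Undoing the twist by $(2)$ multiplies by $\chi_{\cyc}^{-2}$, giving $\widetilde\chi$ and $\Z_p(-2)$ respectively.

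The hardest step is to match Frobenius, not just inertia. Two characters agreeing on inertia differ by an unramified character. I will handle this by first noting that $\widetilde\chi\chi_{\cyc}^{2}$ and the actual action are both crystalline characters with the same Hodge--Tate weights, since $X$ has good reduction. I will then pin the unramified discrepancy by evaluating at the uniformizer $\pi_v$ using $\rho(\sigma)=z\eta$ from Section \ref{sec3}. This step depends on the sign and normalization conventions (Deligne's geometric Frobenius normalization versus Lubin--Tate), and I expect it to need the most care. If necessary, I will replace $K_v$ by an unramified extension to kill the residual root-of-unity discrepancy.
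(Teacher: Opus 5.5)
The parts of your proposal that agree with the paper are:
\begin{itemize}
\item the integral splitting off of $A(X_{\C})\otimes\Z_p$;
\item the rank count (since $\q$ is non-split by the standing assumption, $\p$ is the only place over $\q$ and $z(x)$ is nontrivial only at $\p$, so the exponent $23-[E_{\p}:\Q_p]$ needs no refining);
\item the identification of the $\p$-factor once $\eta$ is trivial.
\end{itemize}

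The gap is in how you dispose of $\eta$. You never use the standing hypotheses that $\mathscr{X}_{\overline{v}}$ is supersingular and $A(\mathscr{X}_{\overline{v}})\simeq A(\mathscr{X}\otimes_{\mathscr{O}_{K_v}}k(v))$, and these are exactly what the paper uses. By smooth proper base change, $\Gal(\overline{K_v}/K_v)$ acts on $H^4_{\et}(X_{\overline{K_v}},\Q_\ell(2))$ through the Frobenius of $k(v)$ on $H^4_{\et}(\mathscr{X}_{\overline{v}},\Q_\ell(2))$. That space is spanned by the classes of $23$ cycles defined over $k(v)$, so the action on $T(X_{\C})\otimes\Q_\ell$ is trivial for every $\ell\neq p$. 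Since $z(x)$ has trivial $\ell$-components, $\eta$ maps to $1$ in $E\otimes\Q_\ell$. Hence $\eta=1$ for every $x\in K_v^\times$, uniformizers included, with no further extension.

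Without this input you can only show that $\eta$ is a root of unity, and you must enlarge $K$ again. That changes the Galois module: you obtain the isomorphism only after restricting to a smaller decomposition group. This would suffice for Theorem \ref{1.4}, whose conclusion is insensitive to such extensions. It does not prove Proposition \ref{4.7} for the field $K$ fixed in Section \ref{sec4.2}.

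Your Frobenius step is also not yet an argument. Comparing Hodge--Tate weights of crystalline characters only shows the discrepancy is unramified, which you already know from agreement on inertia. Evaluating $\rho(\sigma)=z\eta$ at $\pi_v$ just reintroduces the unknown $\eta(\pi_v)$.

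If you keep your route, the clean version is as follows. Since $\q$ is non-split, $c$ fixes $\p$, so $z(\pi_v)$ is also a $\p$-unit. Your compactness argument then shows that $x\mapsto\eta(x)$ is a continuous homomorphism from all of $K_v^\times$ to the finite group $\mu(E)$ of roots of unity in $E$. A single finite abelian extension of $K_v$ kills it.

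The normalization worry is also unfounded. By construction $\chi_\iota(\Art_{K_v}(x))=\chi'(\Nm_{K_v/E_{\p}}(x))$, and $\Nm_{K_v/E_{\p}}(x)$ and its $c$-conjugate have the same valuation. So $\chi_\iota^{-1}\chi_{\iota\circ c}(\Art_{K_v}(x))=c(\Nm_{K_v/E_{\p}}(x))\Nm_{K_v/E_{\p}}(x)^{-1}$ for all $x\in K_v^\times$. Once $\eta=1$, Frobenius therefore matches automatically, and density of $\Art_{K_v}(K_v^\times)$ finishes the proof.
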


\begin{proof}
By the first assumptions of Theorem \ref{1.4}, $\disc A (X_{\C})$ is not divisible by $p$. Hence we have
\[
H^4_{\et}(X_{\overline{K_v}},\Z_p (2)) \simeq (A (X_{\C}) \otimes_{\Z} \Z_p) \oplus (T(X_{\C}) \otimes_{\Z} \Z_p)
\]

By the assumption $A (X_{\C} ) \simeq A (X_K) \simeq A (X_{K_v}) $, the action of $\Gal (\overline{K_v}/K_v)$ on $A (X_{\C}) \otimes_{\Z} \Z_p \simeq A(X_{K_{v}}) \otimes_{\Z} \Z_p$ is trivial. So, it is enough to show that there is an isomorphism of $\Gal (\overline{K_v}/K_v)$-modules
\[ 
T(X_{\C}) \otimes_{\Z} \Z_p \simeq \Z^{\oplus 23-\rank_{\Z} A (X_{\C}) -[E_{\p}:\Q_p]}_{p} \oplus (\mathscr{O}_{E_{\p}}, \chi^{-1}_{\iota} \chi_{\iota \circ c}).
\]

We use the same method as in \cite[Proposition 5.2]{ItoK}. Let $\rho: \Gal (K^{\ab}/K) \to G(\A_f)$ be the continuous homomorphism from the main theorem of complex multiplication for cubic fourfolds (Theorem \ref{2.9}). Since $\Gal (K^{\ab}_v/K_v)$ is compact, the composite
\[
\Gal (K^{\ab}_v/K_v) \to \Gal (K^{\ab}/K) \overset{\rho}{\to} G(\A_f) \to (E \otimes_{\Q} \Q_p)^{\times}
\]
factors through a continuous homomorphism
\[
\rho_p : \Gal (K^{\ab}_v /K_v) \to (\mathscr{O}_{E} \otimes_{\Z} \Z_p)^{\times}.
\]
By the second assumption of Theorem \ref{1.4}, we have
\[
\End_{\Hdg} (T(X_{\C})) \otimes_{\Z} \Z_{(p)} \simeq \mathscr{O}_{E} \otimes_{\Z} \Z_{(p)}.
\]
Using Lemma \ref{4.2}, we see that $T(X_{\C}) \otimes_{\Z} \Z_p$ is a free $\mathscr{O}_{E} \otimes_{\Z} \Z_p$-module of rank $1$ and there is a decomposition 
\[
\mathscr{O}_{E} \otimes_{\Z} \Z_p \simeq \prod_{\p' |p} \mathscr{O}_{E_{\p'}},
\]
where $\p'$ is a finite place of $E$ above $p$. Hence we have an isomorphism of $\Gal (\overline{K_v}/K_{v})$-modules
\[
T(X_{\C}) \otimes_{\Z} \Z_p \simeq \prod_{\p' |p} \mathscr{O}_{E_{\p'}},
\]
where the action of $\Gal (\overline{K_v}/K_v)$ is given by $\pr_{\p'} \circ \rho_p$ as
\[
\Gal (K^{\ab}_v /K_v) \overset{\rho_p}\to
(\mathscr{O}_{E} \otimes_{\Z} \Z_p)^{\times} \simeq \prod_{\p' |p} \mathscr{O}_{E_{\p'}}^{\times} \overset{\pr_{\p'}}\to \mathscr{O}_{E_{\p'}}^{\times}.
\] 
Therefore, it suffices to prove that 
\[
g \mapsto
\begin{cases}
1 & (\p' \not= \p) \\
\chi^{-1}_{\iota}(g) \chi_{\iota \circ c}(g) &(\p' = \p).
\end{cases} 
\]

It suffices to prove these equalities when we restrict them to $K_v^{\times}$ along the local Artin reciprocity map
\[
\Art_{K_v} : K_{v}^{\times} \inj \Gal (K^{\ab}_v /K_v)
\]
since $\Art_{K_v}(K_{v}^{\times}) \subset \Gal (K^{\ab}_v /K_v)$ is dense.

Take an element $x \in K_{v}^{\times}$. By the commutative diagram of Theorem \ref{2.9}, there exists $\eta \in G(\Q)^{\times} \subset E^{\times}$ such that
\[
\rho(\Art_{K_v}(x)) = (1,\dots,1,c(\Nm_{K_v/E_{\p}}(x))\Nm_{K_v/E_{\p}}(x)^{-1},1,\dots,1 ) \eta
\]
because $\q$ does not split in $E$.

Now there is the isomorphism $A(\mathscr{X}_{\overline{v}}) \simeq A(\mathscr{X} \otimes_{\mathscr{O}_{K_v}} k(v))$ and $\mathscr{X}_{\overline{v}}$ is supersingular, the action of $\Gal(\overline{K_v}/K_v)$ on $T(X_{\C}) \otimes_{\Z} \Q_l$ is trivial for all $l \not= p$. It follows that $\eta = 1$. Therefore, we have
\[
(\pr_{\p'} \circ \rho_p) (\Art_{K_v}(x)) =
\begin{cases}
1 & (\p' \not= \p) \\
c(\Nm_{K_v/E_{\p}}(x))\Nm_{K_v/E_{\p}}(x)^{-1} &(\p' = \p).
\end{cases} 
\]
Recall that we fixed the uniformizer $\pi_{\p}$ of $E_{\p}$. It gives an isomorphism
\[
\mathscr{O}_{E_{\p}}^{\times} \times \Z \simeq E_{\p}^{\times};\ \ \ \  (x,n) \mapsto x \pi_{\p}^n.
\]
The composite of the inverse map of this isomorphism with the projection is denoted by $\chi': E_{\p}^{\times} \to \mathscr{O}_{E_{\p}}^{\times}$. Then, we have
\[
\begin{cases}
\chi_{\iota}(\Art_{K_v} (x)) = \chi'(\Nm_{K_v/E_{\p}}(x)) \\
\chi_{\iota \circ c}(\Art_{K_v} (x)) = \chi'(c\Nm_{K_v/E_{\p}}(x)) .
\end{cases} 
\]
For every $z_1,z_2 \in E_{\p}^{\times}$ which have the same valuation, we have
\[
\chi'(z_1) \chi'(z_2)^{-1} = \chi'(z_1 z_2^{-1}) =z_1 z_2^{-1}. 
\]
Hence we have
\begin{align*}
\chi_{\iota}^{-1}(\Art_{K_v}(x)) \chi_{\iota \circ c}(\Art_{K_v} (x)) &= \chi'(c (\Nm_{K_v /E_{\p}}(x) ))\chi'(\Nm_{K_v/E_{\p}}(x))^{-1} \\
&= c (\Nm_{K_v /E_{\p}}(x) ) \Nm_{K_v/E_{\p}}(x)^{-1}.
\end{align*}
\end{proof}

\subsection{The Breuil-Kisin module for cubic fourfold with CM}
\label{sec4.3} 
In this section, we will compute $(\mathscr{O}_{E_{\p}}, \chi^{-1}_{\iota} \chi_{\iota \circ c})$ as $\Gal (\overline{K_v}/K_v)$-modules (cf. Proposition \ref{4.7}) to obtain $H^4_{\crys}(\mathscr{X}_{\overline{v}}/W)$ as $F$-crystal, where $\mathscr{X}_{\overline{v}}$ is cubic fourfold over $\overline{k(v)}$ and $W:=W(\overline{k(v)})$ $ (= W(\overline{\F_p}))$.

The main of this section is to prove Theorem \ref{4.12} which can describe the $F$-crystal as $W$-module.  
We use the consequences of Breuil-Kisin module which can characterize $\Z_p$-lattice in crystalline representation as Theorem \ref{4.8}. By Theorem \ref{4.9}, it is enough to know the structure of integral $p$-adic $\mathrm{\etale}$ cohomology. In particular, the Breuil-Kisin modules defined from $\Gal(\overline{K_v}/K_v)$-modules defined from varieties with CM, can be computed as Proposition \ref{4.10} and Proposition \ref{4.11}.

Fix a uniformizer $\pi_{\p}$ of $E_{\p}$. Let $K^{\ur}_{v}$ be a maximal unramified extension of $K_v$ in $\overline{K_v}$, and $L$ be a completion of $K^{\ur}_{v}$, i.e. $L:= \widehat{K^{\ur}_{v}}$. Now we identify $W$ with the $p$-adic completion of the maximal unramified extension of $\Z_p$ in $\overline{K_v}$. Then $L$ is a finite totally ramified extension of $\Frac (W)$ since using an embedding $W \inj L$. 

Let $E_{\p,0} \subset E_{\p}$ be  the maximal unramified extension of $\Q_p$ in $E_{\p}$. To compute $\mathscr{O}_{E_{\p}} \otimes_{\Z_p} W$ as an $F$-crystal, we decompose it into $\mathscr{O}_{E_{\p,0}}$, which is the subring of $\mathscr{O}_{E_{\p}}$ and constructed as follows: Let $\Emb (E_{\p,0})$ be a set of embeddings from $E_{\p,0}$ to $\Frac (W) \subset L$ and $\varphi$ be a Frobenius on $\Frac (W)$. Take an element $\iota_{0} \in \Emb (E_{\p,0})$ and consider an induced extension
\[
E_{\p,0} \inj E_{\p} \overset{\iota}\inj K_{v} \inj L .
\]
Then, we have
\[
\Emb (E_{\p,0}) = \{ \varphi^i \circ \iota_{0}\ |\ 0 \leq i \leq d-1\},
\]
where $d= [E_{\p,0} :\Q_p]$. Note that, since $E_{\p}/F_{\q}$ is quadratic unramified extension, $d$ is even. 

We introduce some consequences of Breuil-Kisin module. See also \cite[Section 2]{Kisin}. Fix a uniformizer $\pi_{L} \in L$, and a system $\{\pi_{L}^{1/{p^n}}\}_{n \geq 0} \subset \overline{L}$ of $p^n$-th roots of $\pi_{L}$ such that 
\[
(\pi_{L}^{1/p^{n+1}})^p = \pi_{L}^{1/p^n}
\]
for $n \geq 0$. Let $E(u) \in W(u)$ be the monic minimal polynomial of $\pi_L$ over $\Frac (W)$, i.e. it is the (monic) Eisenstein polynomial for $\pi_L$. We define a Frobenius $\varphi$ on
\[
\mathfrak{S} := W[[u]]
\]
which acts on $W$ as the canonical Frobenius and sends $u$ to $u^p$. A \emph{Breuil-Kisin} module (over $\mathcal{O}_{L}$ with respect to $\{\pi_{L}^{1/p^n}\}_{n \geq 0}$) is a free $\mathfrak{S}$-module $\mathfrak{M}$ of finite rank equipped with an isomorphism of $\mathfrak{S}$-modules 
\[
\varphi: (\varphi^{*} \mathfrak{M}) [1/E(u)] \simeq \mathfrak{M} [1/E(u)].
\]

Let $\mathscr{O_{\mathcal{E}}}$ be the $p$-adic completion of $\mathfrak{S}[1/u]$. This is a Cohen ring whose residue field is $\overline{\F_{p}}((u))$. We fix a separable closure $\overline{\F_{p}}((u))^{\sep}$ of $\overline{\F_{p}}((u))$. Let $\mathscr{O}_{\mathcal{E}_{\ur}}$ be the strict henselization of $\mathscr{O}_{\mathcal{E}}$ with respect to $\overline{\F_{p}}((u))^{\sep}$, and $\mathscr{O}_{\widehat{\mathcal{E}_{\ur}}}$  the $p$-adic completion of $\mathscr{O}_{\mathcal{E}_{\ur}}$. The Frobenius $\varphi$ on $\mathfrak{S}$ extends naturally to $\mathscr{O}_{\widehat{\mathcal{E}_{\ur}}}$.

We put 
\[
L_{\infty} := \bigcup_{n \geq 0} L(\pi_{L}^{1/p^n})
\]
By \cite[Corollaire 3.2.3]{Wintenberger}, there is an isomorphism 
\[
\Gal (\overline{L}/L_{\infty}) \simeq \Gal (\overline{\F_p} ((u))^{\sep} / \overline{\F_{p}}((u))).
\]
We have an action of $\Gal(\overline{L}/L_{\infty})$ on $\mathscr{O}_{\widehat{\mathcal{E}_{\ur}}}$ via this isomorphism. 

It is known that the crystalline Galois representation with $\Z_p$-lattice is classified by Breuil-Kisin modules. More precisely, the following are known.

\begin{thm}{\cite{Kisin}}
\label{4.8}

There exists a covariant fully faithful tensor functor 

\begin{center}
\{Category of $\Z_p$-lattice in crystalline $\Gal (\overline{L}/L)$-representation\} \\
$\to$ \{Category of Breuil-Kisin modules\}
\end{center}
\[
\Lambda \mapsto \mathfrak{M}(\Lambda)
\]

The Breuil-Kisin module $\mathfrak{M}(\Lambda)$ is characterized by the existance of an isomorphism of $\mathscr{O}_{\widehat{\mathcal{E_{\ur}}}}$-modules
\[
\Lambda \otimes_{\Z_p} \mathscr{O}_{\widehat{\mathcal{E_{\ur}}}} \simeq \mathfrak{M}(\Lambda) \otimes_{\mathfrak{S}} \mathscr{O}_{\widehat{\mathcal{E_{\ur}}}}
\]
compatible with Frobenius morphism and the action of $\Gal(\overline{L}/L_{\infty})$, where

the Frobenius morphism on the LHS (resp. RHS) is given by $1 \otimes \varphi$ (resp. $\varphi \otimes \varphi$), 

and the action of an element $g \in \Gal (\overline{L}/L_{\infty})$ on the LHS (resp. RHS) is given by $g \otimes g$ (resp $1 \otimes g$).
\end{thm}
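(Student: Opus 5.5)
The statement is Kisin's theorem, so the plan is to reconstruct Kisin's argument from \cite{Kisin} rather than give anything specific to cubic fourfolds. The functor will be the composite of three constructions, and full faithfulness and the characterization will come from properties of each piece.

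First I would use Fontaine's theory of \'etale $\varphi$-modules. Restricting a $\Z_p$-lattice $\Lambda$ to $\Gal(\overline{L}/L_\infty)$, and using the isomorphism $\Gal(\overline{L}/L_\infty)\simeq \Gal(\overline{\F_p}((u))^{\sep}/\overline{\F_p}((u)))$ from \cite{Wintenberger}, set
\[
M_{\mathcal E}(\Lambda):=(\Lambda\otimes_{\Z_p}\mathscr O_{\widehat{\mathcal E_{\ur}}})^{\Gal(\overline L/L_\infty)} .
\]
This is an \'etale $\varphi$-module over $\mathscr O_{\mathcal E}$. The comparison map $M_{\mathcal E}(\Lambda)\otimes\mathscr O_{\widehat{\mathcal E_{\ur}}}\simeq \Lambda\otimes\mathscr O_{\widehat{\mathcal E_{\ur}}}$ is an isomorphism compatible with $\varphi$ and with the Galois action. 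This gives an exact, fully faithful tensor equivalence between $\Z_p$-representations of $\Gal(\overline L/L_\infty)$ and \'etale $\varphi$-modules. The Breuil--Kisin module is then to be found as an $\mathfrak S$-lattice $\mathfrak M\subset M_{\mathcal E}(\Lambda)$ that is stable under $\varphi$ and of finite $E(u)$-height. Once it exists, the displayed characterization in Theorem \ref{4.8} is exactly the statement $\mathfrak M\otimes_{\mathfrak S}\mathscr O_{\mathcal E}=M_{\mathcal E}(\Lambda)$.

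Second, I would prove existence using crystallinity, working rationally. Let $D=D_{\crys}(\Lambda[1/p])$, a weakly admissible filtered $\varphi$-module. Over the ring $\mathcal O$ of rigid functions on the open unit disc, build a $\varphi$-module $\mathcal M(D)$ by modifying $D\otimes\mathcal O$ at the points $\varphi^n(\pi_L)$ according to the Hodge filtration. The key claim is that $\mathcal M(D)$ is pure of slope $0$ in Kedlaya's sense. Weak admissibility gives the degree condition. Kedlaya's slope filtration theorem then shows that $\mathcal M(D)$ descends to a $\varphi$-module $\mathfrak M'$ over $\mathfrak S[1/p]$ (more precisely, over the bounded functions).

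Next I would identify $\mathfrak M'\otimes\mathscr O_{\mathcal E}[1/p]$ with $M_{\mathcal E}(\Lambda)[1/p]$. For this I would compare both sides with $D$ inside $B_{\crys}$, using the $\mathfrak S$-algebra map to $A_{\crys}$ determined by the system $\{\pi_L^{1/p^n}\}$. The integral module is then $\mathfrak M:=\mathfrak M'\cap M_{\mathcal E}(\Lambda)$. Kisin's lemma shows that this $\varphi$-stable intersection is finite free over the two-dimensional regular ring $\mathfrak S$, being reflexive, and that it has finite $E(u)$-height.

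Third, for full faithfulness I would combine two facts:
\begin{itemize}
\item a finite-height $\mathfrak S$-lattice is determined by its $\mathscr O_{\mathcal E}$-span, since morphisms of such lattices extend uniquely from $\mathscr O_{\mathcal E}$;
\item restriction from $\Gal(\overline L/L)$ to $\Gal(\overline L/L_\infty)$ is fully faithful on crystalline representations.
\end{itemize}
The second fact I would prove by passing to $D$: a $\Gal(\overline L/L_\infty)$-equivariant map between crystalline representations induces a morphism of the $\mathcal O$-modules $\mathcal M(D)$, which respects the filtration through its behaviour at $E(u)$, and this forces Galois equivariance over all of $L$. Compatibility with tensor products follows from the tensor property of each construction.

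I expect the main obstacle to be the second step, namely the slope-zero statement and the descent from $\mathcal O$ to $\mathfrak S[1/p]$ via Kedlaya's theorem, together with the comparison with $M_{\mathcal E}$. Full faithfulness of restriction to $\Gal(\overline L/L_\infty)$ is the second delicate point. In this paper I would simply cite \cite[Section 2]{Kisin} for both.
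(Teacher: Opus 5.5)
Your outline is correct and follows essentially the paper's route. The paper gives no argument of its own and only cites \cite{Kisin} (and \cite{ItoK}); what you sketch is Kisin's own argument: Fontaine's \'etale $\varphi$-modules over $\mathscr O_{\mathcal E}$, purity of slope $0$ for $\mathcal M(D)$ via weak admissibility and Kedlaya's theorem, the reflexive (hence free) intersection lattice, and full faithfulness of restriction to $\Gal(\overline L/L_\infty)$. One step needs adding: a $\varphi$-stable $\mathfrak S$-lattice of finite $E(u)$-height exists only for effective Hodge--Tate weights, and in the present covariant normalization this fails already for $\Z_p(1)$. So for general $\Lambda$ you should first twist by a suitable power of the cyclotomic character, apply your construction, and then untwist with the corresponding rank-one Breuil--Kisin module. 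That untwisting is exactly where the inversion of $E(u)$ in the definition is used.
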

\begin{proof}
See \cite{Kisin} and \cite{ItoK}. 
\end{proof}

Moreover, we can obtain $H^4_{\crys}(\mathscr{X}_{\bar{v}}/W) $ as $F$-crystal from integral comparison theorem.

\begin{thm}{\cite{BMSa}}
\label{4.9} 
There is an isomorphism of $F$-crystals
\[
H^4_{\crys}(\mathscr{X}_{\overline{v}}/W) \simeq \varphi^{*} (\mathfrak{M} (H^4_{\et}(X_{\overline{K_v}},\Z_p)) /u\mathfrak{M}(H^4_{\et}(X_{\overline{K_v}},\Z_p)))
\]
\end{thm}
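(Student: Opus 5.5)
The isomorphism of Theorem \ref{4.9} will be deduced from the integral $p$-adic Hodge theory of Bhatt--Morrow--Scholze, applied to the smooth proper model $\mathscr{X}$ over $\mathscr{O}_L$. First base change $\mathscr{X}$ from $\mathscr{O}_{K_v}$ to $\mathscr{O}_L$, where $L=\widehat{K_v^{\ur}}$. This base change does not affect the geometric special fiber $\mathscr{X}_{\overline{v}}$. It also does not change the restriction of $H^4_{\et}(X_{\overline{K_v}},\Z_p)$ to $\Gal(\overline{L}/L)$, which is the inertia-type subgroup relevant for Kisin's functor in Theorem \ref{4.8}. Since $\mathscr{X}$ is proper smooth over $\mathscr{O}_L$, this $\Z_p$-lattice is a lattice in a crystalline representation, so $\mathfrak{M}(H^4_{\et}(X_{\overline{K_v}},\Z_p))$ is defined.

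The next step is to invoke the Breuil--Kisin cohomology $R\Gamma_{\mathfrak{S}}(\mathscr{X})$ of \cite{BMSa}. Two of its properties will be used.
\begin{enumerate}
\item[(a)] After extending scalars to $A_{\inf}$, hence to $\mathscr{O}_{\widehat{\mathcal{E}_{\ur}}}$ after inverting $u$, it recovers $H^i_{\et}(X_{\overline{L}},\Z_p)\otimes \mathscr{O}_{\widehat{\mathcal{E}_{\ur}}}$. This identification is compatible with Frobenius and with the action of $\Gal(\overline{L}/L_\infty)$.
\item[(b)] Its base change along $\varphi:\mathfrak{S}\to W$, $u\mapsto 0$ composed with Frobenius, computes crystalline cohomology. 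Concretely, $\varphi^*(H^i_{\mathfrak{S}}/u)\simeq H^i_{\crys}(\mathscr{X}_{\overline{v}}/W)$ whenever the relevant groups are torsion free.
\end{enumerate}
If $H^4_{\mathfrak{S}}(\mathscr{X})$ is a free $\mathfrak{S}$-module, then (a) together with the characterization in Theorem \ref{4.8} and full faithfulness identifies $H^4_{\mathfrak{S}}(\mathscr{X})$ with $\mathfrak{M}(H^4_{\et})$. Property (b) then gives the isomorphism of $F$-crystals.

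The main obstacle is torsion freeness, which is what makes $H^4_{\mathfrak{S}}$ free and lets the base change in (b) commute with taking cohomology. By \cite{BMSa}, it suffices that $H^i_{\crys}(\mathscr{X}_{\overline{v}}/W)$ is torsion free for $i=4,5$; equivalently, that $H^*_{\et}(X_{\overline{K_v}},\Z_p)$ is torsion free in the relevant degrees. For a smooth cubic fourfold this should follow from the Lefschetz hyperplane theorem and the Hodge numbers, which give vanishing odd cohomology and torsion-free even cohomology in crystalline and in étale cohomology. Here one uses $p\ge 7$, the standing assumption of the Notations. One can alternatively pass through the Fano variety $F(X)$, whose crystalline cohomology is torsion free for $p\ge 7$, via the Abel--Jacobi correspondence. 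Once torsion freeness is established, the universal coefficient argument for the derived base changes yields the stated isomorphism.
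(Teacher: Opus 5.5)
Your proposal is correct and is the same argument the paper invokes by citation (\cite[Theorem 14.6]{BMS}, \cite[Theorem 1.2]{BMSa}, \cite[Theorem 2.4]{ChiarellottoLazdaLiedtke}): identify Breuil--Kisin cohomology with $\mathfrak{M}(H^4_{\et}(X_{\overline{K_v}},\Z_p))$ via the \'etale comparison and Kisin's full faithfulness, then apply the crystalline specialization; you make the torsion-freeness hypotheses explicit, whereas the paper leaves them implicit. One slip: crystalline and \'etale torsion-freeness are not equivalent in general (BMS only give crystalline $\Rightarrow$ \'etale), so you must check the hypothesis on $\mathscr{X}_{\overline{v}}$ directly. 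That check goes through for any smooth hypersurface in any characteristic, via the Lefschetz-type computation of its Hodge and de Rham cohomology, so $p\ge 7$ plays no role at this step.
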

\begin{proof}
See \cite[Theorem 14.6]{BMS} and \cite[Theorem 1.2]{BMSa}, \cite[Theorem 2.4]{ChiarellottoLazdaLiedtke}. 
\end{proof}

So, by Theorem \ref{4.8} and Theorem \ref{4.9}, it is sufficient to compute the associated Galois module $\mathfrak{M}(H^4_{\et}(X_{\overline{K_v}},\Z_p))$ to obtain its Artin invariant. Using Proposition \ref{4.7}, we can describe it as a Lubin-Tate character $(\mathscr{O}_{E_{\p}},\widetilde{\chi})$. The Breuil-Kisin module that comes from this character can be calculated as follows.

Next, we give an explicit description of the Breuil-Kisin module $\mathfrak{M}(H^4_{\et}(X_{\overline{K_v}},\Z_p))$. By Proposition \ref{4.7}, we have an isomorphism 
\[
H^4_{\et}(X_{\overline{K_v}},\Z_p) \simeq \Z_p (-2)^{\oplus 23-[E_{\p}:\Q_p]} \oplus (\mathscr{O}_{E_{\p}},\widetilde{\chi}).
\]
So there is an isomorphism of Breuil-Kisin modules
\[\mathfrak{M}(
H^4_{\et}(X_{\overline{K_v}},\Z_p)) \simeq \mathfrak{M}(\Z_p (-2))^{\oplus 23-[E_{\p}:\Q_p]} \oplus \mathfrak{M}((\mathscr{O}_{E_{\p}},\widetilde{\chi})).
\]

The Breuil-Kisin module $\mathfrak{M}(\Z_p(-2))$ is described as follows.  It is identified with
$\mathfrak{S}$ as an $\mathfrak{S}$-module, and the Frobenius $\varphi$ is defined by $\varphi(1) = p^2 E(u)/E(0)$. Thus, to describe $\mathfrak{M}(
H^4_{\et}(X_{\overline{K_v}},\Z_p))$, it is enough to compute $\mathfrak{M}((\mathscr{O}_{E_{\p}},\widetilde{\chi}))$. 

We put 
\[
W_i := \mathscr{O}_{E_{\p}} \otimes_{R_i} W,
\]
where $R_i$ is the subring of $W$ by the embedding from  $\mathscr{O}_{E_{\p,0}}$ through $\varphi^i \circ \iota$. 

Then we have a decomposition
\[
\mathscr{O}_{E_{\p}} \otimes_{\Z_p} W \simeq \prod_{0 \leq i \leq d-1} W_i,
\]
and
\[
\mathscr{O}_{E_{\p}} \otimes_{\Z_p} \mathfrak{S} \simeq \prod_{0 \leq i \leq d-1} \mathfrak{S}_i,
\]
where $\mathfrak{S}_i := W_i [[u]]$.

Let 
\[
E_0 (u) \in W_{0}[u]
\]
be the monic minimal polynomial of $\pi_L$ over $\Frac(W_0)$. Now we consider the case of $i =0,d/2$. $W_0$ is embedded into $L$ via the map
\[
W_0 = \mathscr{O}_{E_{\p}} \otimes_{R_0} W \to L, \ \ \ \ x \otimes y \mapsto \iota (x) y.
\]

By Proposition \ref{4.3}, the involution $c$ is non-trivial on $E_{\p,0}$. So the embedding $\iota \circ c : E_{\p} \inj L$ induces the embedding $\varphi^{d/2} \circ \iota_{0} \in \Emb(E_{\p,0})$. Let 
\[
E_{d/2} (u) \in W_{d/2}[u]
\]
be the monic minimal polynomial of $\pi_{L}$ over $\Frac (W_{d/2})$, where we embed $W_{d/2}$ into $L$ by the embedding 
\[
W_{d/2} = \mathscr{O}_{E_{\p}} \otimes_{R_{d/2}} W \to L, \ \ \ \ x \otimes y \mapsto (\iota \circ c) (x) y.
\]

Recall that the character  $\widetilde{\chi} : \Gal(\overline{K_{v}}/K_v) \to \mathscr{O}_{E_{\p}}^{\times}$ is defined by
\[
\widetilde{\chi} = \chi_{\iota}^{-1} \chi_{\iota \circ c} \chi_{\cyc}^{-2}.
\]
To know the inverse part of this character, let $(\mathscr{O}_{E_{\p}},\chi)^{\vee}$ be a $\Gal (\overline{K_v}/K_v)$-module
\[
\mathscr{O}_{E_{\p}}^{\vee} := \Hom_{\Z_p} (\mathscr{O}_{E_{\p},\Z_p})
\]
induced by $(\mathscr{O}_{E_{\p}},\chi)$. We show the inverse part of $\widetilde{\chi}$ can be computed from the dual $\mathscr{O}_{E_{\p}}^{\vee}$.

For an element $\gamma \in \mathscr{O}_{E_{\p}} \otimes_{\Z_p} \mathfrak{S}$, we define a Frobenius semi-linear map $\varphi_{\gamma}$ on $\mathscr{O}_{E_{\p}} \otimes_{\Z_p} \mathfrak{S}$ which commutes with the $\mathscr{O}_{E_{\p}}$-action by $\varphi_{\gamma} (1) = \gamma$. We denote the $\mathfrak{S}$-module $\mathscr{O}_{E_{\p}} \otimes_{\Z_p} \mathfrak{S}$ equipped with the Frobenius semi-linear map $\varphi_{\gamma}$ by
\[
(\mathscr{O}_{E_{\p}} \otimes_{\Z_p} \mathfrak{S}, \gamma).
\]

\begin{prop}{\cite[Proposition 2.2.1]{AGHM}}
\label{4.10}

(1)\ The Breuil-Kisin module 
\[
\mathfrak{M} ((\mathscr{O}_{E_{\p}},\chi_{\iota})^{\vee})
\]
is $\mathscr{O}_{E_{\p}}$-equivalently isomorphic to Breuil-Kisin module $(\mathscr{O}_{E_{\p}} \otimes_{\Z_p} \mathfrak{S},\be_{\iota})$, where the element
\[
\be_{\iota} \in \mathscr{O}_{E_{\p}}\otimes_{\Z_p} \mathfrak{S} \simeq \prod_{0 \leq i \leq d-1} \mathfrak{S}_{i}
\]
is specified as follows; the $i$-th component $\be_{\iota, i}$ of $\be_{\iota}$ is
\[
\be_{\iota, i} = 
\begin{cases}
\pi_{\p} E_{0}(u) /E_{0} (0)\ \ \ \ &(i=0) \\
1 &(i \not= 0).
\end{cases}
\]

(2)\ The Breuil-Kisin module 
\[
\mathfrak{M} ((\mathscr{O}_{E_{\p}},\chi_{\iota \circ c})^{\vee})
\]
is $\mathscr{O}_{E_{\p}}$-equivalently isomorphic to Breuil-Kisin module $(\mathscr{O}_{E_{\p}} \otimes_{\Z_p} \mathfrak{S},\be_{\iota \circ c})$, where the element
\[
\be_{\iota} \in \mathscr{O}_{E_{\p}}\otimes_{\Z_p} \mathfrak{S} \simeq \prod_{0 \leq i \leq d-1} \mathfrak{S}_{i}
\]
is specified as follow; the $i$-th component $\be_{\iota \circ c, i}$ of $\be_{\iota \circ c}$ is
\[
\be_{\iota \circ c, i} = 
\begin{cases}
\pi_{\p} E_{d/2}(u) /E_{d/2} (0)\ \ \ \ &(i=d/2) \\
1 &(i \not= d/2)
\end{cases}
\]
\end{prop}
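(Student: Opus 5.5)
This is \cite[Proposition 2.2.1]{AGHM}, and the plan is to reproduce its argument in our notation. Part (2) reduces to part (1) by replacing $\iota$ with $\iota \circ c$. The embedding $\iota\circ c$ restricted to $E_{\p,0}$ is $\varphi^{d/2}\circ\iota_0$ (this uses Proposition \ref{4.3}). So the only effect of the replacement is to move the distinguished component from index $0$ to index $d/2$ and to replace $E_0(u)$ by $E_{d/2}(u)$. We therefore concentrate on (1).

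\textbf{Step 1: the candidate is a Breuil-Kisin module.} Put $\mathfrak{M}:=(\mathscr{O}_{E_{\p}}\otimes_{\Z_p}\mathfrak{S},\be_\iota)$. This is free over $\mathfrak{S}$ of rank $[E_{\p}:\Q_p]$, and it carries an $\mathscr{O}_{E_{\p}}$-action commuting with $\varphi_{\be_\iota}$. Each component of $\be_\iota$ divides a power of $E(u)$ up to a unit: on $\mathfrak{S}_0$, $E_0(u)$ is a factor of $E(u)$, and $\pi_{\p}/E_0(0)$ is a unit because $E_0$ is Eisenstein over $\Frac(W_0)$ with uniformizer $\pi_{\p}$. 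Hence $\varphi_{\be_\iota}$ becomes an isomorphism after inverting $E(u)$, and $\mathfrak{M}$ is a Breuil-Kisin module.

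\textbf{Step 2: the representation attached to $\mathfrak{M}$.} Kisin's theory gives the inverse direction: $T(\mathfrak{M}):=(\mathfrak{M}\otimes_{\mathfrak{S}}\mathscr{O}_{\widehat{\mathcal{E}_{\ur}}})^{\varphi=1}$ is a $\Z_p$-lattice in a crystalline representation whose image under the functor of Theorem \ref{4.8} is $\mathfrak{M}$, with the usual duality in the covariant normalization. The $\mathscr{O}_{E_{\p}}$-action makes $T(\mathfrak{M})$ free of rank one over $\mathscr{O}_{E_{\p}}$. Hence it is $(\mathscr{O}_{E_{\p}},\psi)$ for a crystalline character $\psi:\Gal(\overline{L}/L)\to\mathscr{O}_{E_{\p}}^\times$, or the dual of such a module. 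By full faithfulness (Theorem \ref{4.8}) it then suffices to show $\psi=\chi_\iota$ on $\Gal(\overline L/L)$, up to the relevant dualization.

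\textbf{Step 3: identify $\psi$ (main obstacle).} A crystalline character with values in $E_{\p}^\times$ is determined, up to an unramified twist, by its labelled Hodge-Tate weights. These can be read off from the components of $\be_\iota$: there is a single nontrivial factor $E_0(u)$, located at the component of the embedding $\iota$. This matches $\chi_\iota$, which has Hodge-Tate weight $1$ at $\iota$ and $0$ elsewhere by the Lubin-Tate description.

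The unramified part requires care, and it is where I expect the main difficulty. Since $L=\widehat{K_v^{\ur}}$ has algebraically closed residue field, unramified characters of $\Gal(\overline L/L)$ are trivial. So it suffices to compare the two characters on $\Gal(\overline L/L)$ itself, which is the only group relevant for Theorems \ref{4.8} and \ref{4.9}.

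To pin down the normalization, namely the factor $\pi_{\p}/E_0(0)$ and the sign coming from the dual, I would compute directly. Take the Lubin-Tate formal group of $\pi_{\p}$ over $\mathscr{O}_{E_{\p}}$, base change it to $\mathscr{O}_L$ via $\iota$, and compute its Breuil-Kisin module. It is the Dieudonné-type module of a $p$-divisible group, and Kisin's classification of $p$-divisible groups over $\mathscr{O}_L$ gives Frobenius exactly $\be_\iota$. Its Tate module is $(\mathscr{O}_{E_{\p}},\chi_\iota)$, and the contravariant/dual convention accounts for the $\vee$ in the statement. This establishes (1), and (2) follows by the substitution described at the start.
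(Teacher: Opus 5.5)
Your proposal is correct, but for (1) your main argument is not the one behind the citation. The paper gives no argument of its own. Your reduction of (2) to (1) is the one it sets up just before the statement: apply (1) to $\iota\circ c$, using $(\iota\circ c)|_{E_{\p,0}}=\varphi^{d/2}\circ\iota_0$ from Proposition \ref{4.3}.

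For (1), the cited proof is essentially the geometric computation you put in your last paragraph. Let $\mathcal{G}$ be the Lubin--Tate $p$-divisible group of $\pi_{\p}$ over $\mathscr{O}_L$. Then $\mathfrak{M}(T_p\mathcal{G}^{\vee})$ is the Breuil--Kisin module of $\mathcal{G}$. Since $\mathrm{Lie}(\mathcal{G})$ is free of rank one over $\mathscr{O}_L$ with $\mathscr{O}_{E_{\p}}$ acting through $\iota$, the Frobenius is $E_0(u)$ times a unit on $\mathfrak{S}_0$ and a unit on every other $\mathfrak{S}_i$.

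Your Steps 1--3 stay on the Galois side instead. The explicit candidate yields a crystalline $\mathscr{O}_{E_{\p}}$-character of $\Gal(\overline{L}/L)$. Over $L$ such a character is determined by its labelled Hodge--Tate weights: a quotient of two characters with the same weights is crystalline of weight $0$, hence unramified, hence trivial. Full faithfulness then finishes. Step 2 needs Kisin's theorem that every Breuil--Kisin module comes, up to isogeny, from a crystalline representation. It also needs two points you should state explicitly:
\begin{enumerate}
\item the $\mathscr{O}_{E_{\p}}$-action on $T(\mathfrak{M})$, a priori only $\Gal(\overline{L}/L_\infty)$-equivariant, commutes with $\Gal(\overline{L}/L)$, because restriction to $\Gal(\overline{L}/L_\infty)$ is fully faithful on crystalline representations;
\item the lattice is then automatically $\Gal(\overline{L}/L)$-stable, since the character takes values in $\mathscr{O}_{E_{\p}}^{\times}$.
\end{enumerate}
What the geometric route buys is an explanation of the chosen normalization. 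At $u=0$, $\pi_{\p}E_0(u)/E_0(0)$ becomes $\pi_{\p}$, the Frobenius of the Dieudonn\'e module of the special fibre of $\mathcal{G}$, and that is what Theorem \ref{4.12} uses.

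The step you flag as the main difficulty is not one. In your own argument the candidate already carries the factor $\pi_{\p}/E_0(0)$, so nothing has to be pinned down. In any case that factor is immaterial. Every unit of $\mathscr{O}_{E_{\p}}\otimes_{\Z_p}\mathfrak{S}$ has the form $\lambda\varphi(\lambda)^{-1}$ with $\lambda$ a unit, so multiplication by $\lambda$ is an $\mathscr{O}_{E_{\p}}$-equivariant isomorphism $(\mathscr{O}_{E_{\p}}\otimes_{\Z_p}\mathfrak{S},\gamma)\simeq(\mathscr{O}_{E_{\p}}\otimes_{\Z_p}\mathfrak{S},\gamma\lambda\varphi(\lambda)^{-1})$. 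To see this, reduce to an equation $\varphi^{d}(\mu)=w'\mu$ in $\mathfrak{S}_0^{\times}$ and write $w'=ah$ with $a\in W_0^{\times}$ and $h\in 1+u\mathfrak{S}_0$. Solve for $a$ by Lang's theorem over $\overline{\F}_p$ plus successive approximation, and for $h$ by the $u$-adically convergent product $\prod_{n\geq 0}\varphi^{dn}(h)^{-1}$.

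This lemma is exactly what the geometric route needs, since Kisin's theory determines the Frobenius only up to such a unit, not ``exactly $\be_\iota$'' as you wrote. What your route does have to verify is the sign convention. Under the covariant functor of Theorem \ref{4.8}, a factor $E(u)$ in the Frobenius corresponds to the weight of $\Z_p(-1)$; for example $\mathfrak{M}(\Z_p(-1))=(\mathfrak{S},pE(u)/E(0))$. By contrast, $\chi_\iota$ has the weight of $\Z_p(1)$ at $\iota$. This, rather than any contravariance, is why the dual appears in the statement.
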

\begin{proof}
See \cite[Proposition 2.2.1]{AGHM} and \cite[Proposition 6.3]{ItoK}.
\end{proof}

\begin{prop}{cf. \cite[Proposition 6.4]{ItoK}} 
\label{4.11}
The Breuil-Kisin module $\mathfrak{M}((\mathscr{O}_{E_{\p}}, \widetilde{\chi}))$ is $\mathscr{O}_{E_{\p}}$-equivariantly isomorphic to the Breuil-Kisin module $(\mathscr{O}_{E_{\p}}\otimes_{\Z_p} \mathfrak{S},\widetilde{\be})$, where the element 
\[
\widetilde{\be} \in \mathscr{O}_{E_{\p}} \otimes_{\Z_p} \mathfrak{S} \simeq \prod_{0 \leq i \leq d-1} \mathfrak{S}_i
\]
is specified as follows; the $i$-th component $\widetilde{\be}_i$ of $\widetilde{\be}$ is 
\[
\widetilde{\be}_i = 
\begin{cases}
p^2 (E(u)/E(0)) (\pi_{\p} E_0 (u)/E_0 (0)) & (i=0) \\
p^2 (E(u)/E(0)) (\pi_{\p} E_{d/2} (u)/E_{d/2} (0))^{-1} & (i=d/2) \\
p^2 (E(u)/E(0))  & (i \not=0 ,d/2) \\
\end{cases}
\]
\end{prop}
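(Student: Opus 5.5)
The plan is to exploit that the functor $\Lambda \mapsto \mathfrak{M}(\Lambda)$ of Theorem \ref{4.8} is a fully faithful \emph{tensor} functor compatible with duals. I will decompose $(\mathscr{O}_{E_{\p}},\widetilde{\chi})$ into pieces whose Breuil-Kisin modules are already known, and then multiply the Frobenius elements componentwise. Write
\[
(\mathscr{O}_{E_{\p}},\widetilde{\chi}) \simeq (\mathscr{O}_{E_{\p}},\chi_{\iota})^{\vee\vee\text{-dual}} \otimes_{\mathscr{O}_{E_{\p}}} (\mathscr{O}_{E_{\p}},\chi_{\iota\circ c}) \otimes_{\Z_p} \Z_p(-2).
\]
More precisely, I use the following identification. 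For a character $\chi$ with values in $\mathscr{O}_{E_{\p}}^{\times}$, the $\Z_p$-dual $(\mathscr{O}_{E_{\p}},\chi)^{\vee}$ is isomorphic, via the trace pairing $\Tr_{E_{\p}/\Q_p}(\mathcal{D}^{-1}\,\cdot\,\cdot)$, to $(\mathcal{D}^{-1}_{E_{\p}/\Q_p},\chi^{-1}) \simeq (\mathscr{O}_{E_{\p}},\chi^{-1})$ as an $\mathscr{O}_{E_{\p}}$-module with Galois action. Hence $\chi_\iota^{-1}$ corresponds to $(\mathscr{O}_{E_{\p}},\chi_\iota)^\vee$, and $\chi_{\iota\circ c}$ corresponds to the dual of $(\mathscr{O}_{E_{\p}},\chi_{\iota\circ c})^\vee$. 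The product of characters corresponds to the tensor product over $\mathscr{O}_{E_{\p}}$, and the twist by $\chi_{\cyc}^{-2}$ corresponds to tensoring over $\Z_p$ with $\Z_p(-2)$.

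The key steps, in order, are as follows.

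\begin{enumerate}
\item Check that $\mathfrak{M}$ commutes with $\mathscr{O}_{E_{\p}}$-linear tensor products and duals. Full faithfulness makes $\mathfrak{M}(\Lambda)$ an $\mathscr{O}_{E_{\p}}\otimes\mathfrak{S}$-module for $\mathscr{O}_{E_{\p}}$-modules $\Lambda$. Tensor products over $\mathscr{O}_{E_{\p}}$ are obtained as quotients of tensor products over $\Z_p$, which $\mathfrak{M}$ preserves by exactness on these lattices. Alternatively, I would verify all of this directly through the characterization in Theorem \ref{4.8} after tensoring with $\mathscr{O}_{\widehat{\mathcal{E}_{\ur}}}$.
\item Compute the rank-one objects $(\mathscr{O}_{E_{\p}}\otimes\mathfrak{S},\gamma)$. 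Their tensor product over $\mathscr{O}_{E_{\p}}\otimes\mathfrak{S}$ is $(\mathscr{O}_{E_{\p}}\otimes\mathfrak{S},\gamma\gamma')$. Their $\mathfrak{S}$-linear dual, identified via the trace pairing, is $(\mathscr{O}_{E_{\p}}\otimes\mathfrak{S},\gamma^{-1})$ after inverting $E(u)$, which is the permitted localization.
\item Apply Proposition \ref{4.10}. The dual of $(\mathscr{O}_{E_{\p}},\chi_\iota)$, namely $\chi_\iota^{-1}$, gives the element $\be_\iota$, with $i=0$ component $\pi_{\p}E_0(u)/E_0(0)$. The character $\chi_{\iota\circ c}$ is the dual of $(\mathscr{O}_{E_{\p}},\chi_{\iota\circ c})^\vee$, so it gives $\be_{\iota\circ c}^{-1}$, with $i=d/2$ component $(\pi_{\p}E_{d/2}(u)/E_{d/2}(0))^{-1}$.
\item Use the description $\varphi(1)=p^2E(u)/E(0)$ for $\mathfrak{M}(\Z_p(-2))$. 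After base change to $\mathscr{O}_{E_{\p}}\otimes\mathfrak{S}$, it contributes the factor $p^2E(u)/E(0)$ in every component.
\item Multiply the three contributions componentwise in $\prod_i \mathfrak{S}_i$. Here $i=0$ and $i=d/2$ are distinct components because $d$ is even and $c$ acts nontrivially on $E_{\p,0}$ (Proposition \ref{4.3}). The result is exactly the stated $\widetilde{\be}$.
\end{enumerate}

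I expect the main obstacle to be the bookkeeping of duals and inverses in step 2. One has to verify that the self-duality isomorphism $(\mathscr{O}_{E_{\p}},\chi)^\vee\simeq(\mathscr{O}_{E_{\p}},\chi^{-1})$ coming from the different is compatible with the Frobenius up to a unit. The issue is that the different is generated by $d^{-1}\in F^\times$, as in Proposition \ref{4.3}, and this generator could a priori rescale $\gamma$ by $\varphi(\delta)/\delta$ for some $\delta$. I would handle this by noting that changing the basis of a rank-one module by a unit $\delta\in(\mathscr{O}_{E_{\p}}\otimes\mathfrak{S})^\times$ changes $\gamma$ to $\gamma\varphi(\delta)/\delta$, which gives an isomorphic Breuil-Kisin module. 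Since the generator of the different is a constant in $\mathscr{O}_{E_{\p}}$, the factor is trivial because $\varphi$ fixes $\mathscr{O}_{E_{\p}}\otimes 1$. This is the argument of \cite[Proposition 6.4]{ItoK}, with the cyclotomic exponent changed from $-1$ to $-2$.
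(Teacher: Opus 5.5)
Your argument follows the paper's proof. You realize $\chi_{\iota}^{-1}$ as $(\mathscr{O}_{E_{\p}},\chi_{\iota})^{\vee}$ via the trace form and write $(\mathscr{O}_{E_{\p}},\widetilde{\chi})\simeq(\mathscr{O}_{E_{\p}},\chi_{\iota\circ c}\chi_{\cyc}^{-2})\otimes_{\mathscr{O}_{E_{\p}}}(\mathscr{O}_{E_{\p}},\chi_{\iota})^{\vee}$. You then read off $\be_{\iota}$ and $\be_{\iota\circ c}^{-1}$ from Proposition \ref{4.10} and multiply componentwise. Your treatment of duals is correct and more explicit than the paper's: the dual of $(\mathscr{O}_{E_{\p}}\otimes_{\Z_p}\mathfrak{S},\gamma)$ is $(\mathscr{O}_{E_{\p}}\otimes_{\Z_p}\mathfrak{S},\gamma^{-1})$, and the inverse different is harmless because its generator lies in $E_{\p}\otimes 1$ and is fixed by $\varphi$.

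The genuine problem is step 4. You take that input from the text before the proposition, and the paper's proof relies on it too. The claim is that $\mathfrak{M}(\Z_p(-2))$ is $(\mathfrak{S},p^{2}E(u)/E(0))$, and this cannot be right. Write $p^{2}E(u)/E(0)=p\cdot(p/E(0))\cdot E(u)$ with $p/E(0)\in W^{\times}$. Then $p$ is not a unit in $\mathfrak{S}[1/E(u)]$: a relation $pf=E(u)^{n}$ would reduce modulo $p$ to $u^{n\deg E}=0$ in $\overline{\F_p}[[u]]$. So this $\varphi$ does not become an isomorphism after inverting $E(u)$, and no Breuil--Kisin module has this Frobenius. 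The same objection applies, component by component, to the $\widetilde{\be}$ in the statement as written.

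The tensor compatibility you invoke in step 1 gives the correct factor. We have $\mathfrak{M}(\Z_p(-2))\simeq\mathfrak{M}(\Z_p(-1))^{\otimes 2}$, and $\mathfrak{M}(\Z_p(-1))$ has $\varphi(1)=pE(u)/E(0)$ (the normalization for which $\varphi^{*}(\mathfrak{M}/u\mathfrak{M})$ is $W(-1)$). Hence $\varphi(1)=p^{2}(E(u)/E(0))^{2}$.

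Carried out this way, your steps prove the proposition with $p^{2}(E(u)/E(0))^{2}$ in place of $p^{2}(E(u)/E(0))$ in every component. As a check, $E_{0}(u)$ then appears to the power $3$ in $\widetilde{\be}_{0}$ and $E_{d/2}(u)$ to the power $1$ in $\widetilde{\be}_{d/2}$, with all other factors of $E(u)$ squared. This matches the Hodge--Tate weights $3,1,2$ forced by $h^{3,1}=h^{1,3}=1$. Both elements take the same values $p^{2}\pi_{\p}$, $p^{2}\pi_{\p}^{-1}$, $p^{2}$ at $u=0$, so Theorem \ref{4.12} and the Artin invariant computation are unaffected. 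Even so, you should derive the $\Z_p(-2)$ factor from $\mathfrak{M}(\Z_p(-1))$ rather than quote it, and state the corrected $\widetilde{\be}$.
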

\begin{proof}
By an isomorphism of $\Gal(\overline{K_v}/K_v)$-modules
\[
(\mathscr{O}_{E_{\p}}, \chi^{-1}_{\iota})\left[\frac{1}{p}\right] \simeq (\mathscr{O}_{E_{\p}}, \chi^{-1}_{\iota})^{\vee}\left[\frac{1}{p}\right]
\]
induced by the trace map $\Tr_{E_{\p}/\Q_p}:E_{\p} \to \Q_p$, the $\mathscr{O}_{E_{\p}}$-lattice $\mathscr{O}_{E_{\p}}^{\vee}$ on the RHS is translated to the $\mathscr{O}_{E_{\p}}$-lattice $\pi_{\p}^{-d} \mathscr{O}_{E_{\p}} \subset (\mathscr{O}_{E_{\p}}, \chi^{-1}_{\iota})\left[\frac{1}{p}\right])$ for an integer $d>0$. It follows that there exists an isomorphism of $\Gal(\overline{K_v}/K_v)$-modules
\[
(\mathscr{O}_{E_{\p}}, \chi^{-1}_{\iota}) \simeq (\mathscr{O}_{E_{\p}}, \chi^{-1}_{\iota})^{\vee}
\]
Thus, there is an isomorphism of $\Gal(\overline{L}/L)$-modules
\[
(\mathscr{O}_{E_{\p}}, \widetilde{\chi}) \simeq (\mathscr{O}_{E_{\p}}, \chi_{\iota \circ c} \chi_{\cyc}^{-2}) \otimes_{\mathscr{O}_{E_{\p}}} (\mathscr{O}_{E_{\p}}, \chi_{\iota})^{\vee}.
\]
Taking the functor $\mathfrak{M}(-)$, we have
\[
\mathfrak{M}((\mathscr{O}_{E_{\p}}, \widetilde{\chi})) \simeq \mathfrak{M}((\mathscr{O}_{E_{\p}}, \chi_{\iota \circ c} \chi_{\cyc}^{-2}) \otimes_{\mathscr{O}_{E_{\p}}} (\mathscr{O}_{E_{\p}}, \chi_{\iota})^{\vee}).
\]

Since $\mathfrak{M}((\mathscr{O}_{E_{\p}},\chi_{\iota})^{\vee})$ is a free $\mathscr{O}_{E_{\p}} \otimes_{\Z_p} \mathfrak{S}$-module of rank $1$ by Proposition \ref{4.10} (1), $\mathfrak{M}((\mathscr{O}_{E_{\p}}, \chi_{\iota \circ c} \chi_{\cyc}^{-2})) \otimes_{\mathscr{O}_{E_{\p}} \otimes_{\Z_p} \mathfrak{S}} \mathfrak{M}((\mathscr{O}_{E_{\p}}, \chi_{\iota})^{\vee})$ is a finite rank $\mathfrak{S}$-module, i.e., it is Breuil-Kisin module. In particular, using Theorem \ref{4.8}, we have an isomorphism 
\[
\mathfrak{M}((\mathscr{O}_{E_{\p}}, \widetilde{\chi})) \simeq \mathfrak{M}((\mathscr{O}_{E_{\p}}, \chi_{\iota \circ c} \chi_{\cyc}^{-2})) \otimes_{\mathscr{O}_{E_{\p}} \otimes_{\Z_p} \mathfrak{S}} \mathfrak{M}((\mathscr{O}_{E_{\p}}, \chi_{\iota})^{\vee}).
\]

Using an isomorphism $\mathscr{O}_{E_{\p}} \simeq \mathscr{O}_{E_{\p}}^{\vee}$ as above, Proposition \ref{4.10} (2) and the fact that the functor $\mathfrak{M}(-)$ is a tensor functor, we see that the Breuil-Kisin module 
\[
\mathfrak{M}((\mathscr{O}_{E_{\p}},\chi_{\iota \circ c}\chi_{\cyc}^{-2}))
\]
is $\mathscr{O}_{E_{\p}}$-equivariantly isomorphic to the Breuil-Kisin module $(\mathscr{O}_{E_{\p}} \otimes_{\Z_p} \mathfrak{S}, \be')$, where the element 
\[
\be' \in \mathscr{O}_{E_{\p}} \otimes_{\Z_p} \mathfrak{S} \simeq \prod_{0 \leq i \leq d-1} \mathfrak{S}_i
\]
is specified as follows; the $i$-th component $\be'_i$ of $\be$ is
\[
\be'_i = 
\begin{cases}
p^2 (E(u)/E(0)) (\pi_{\p} E_{d/2}(u)/E_{d/2}(0))^{-1} & (i=d/2) \\
p^2 (E(u)/E(0)) & (i \not= d/2) 
\end{cases}
\]
By using this description, the assertion follows from the isomorphism 
\[
\mathfrak{M}((\mathscr{O}_{E_{\p}},\widetilde{\chi})) \simeq \mathfrak{M}((\mathscr{O}_{E_{\p}}, \chi_{\iota \circ c} \chi_{\cyc}^{-2})) \otimes_{\mathscr{O}_{E_{\p}} \otimes_{\Z_p} \mathfrak{S}} \mathfrak{M}((\mathscr{O}_{E_{\p}}, \chi_{\iota})^{\vee}).
\]
and Theorem \ref{4.8}.
\end{proof}

We prove the following Theorem \ref{4.12}, which provides a detailed description of $H^4_{\crys}(\mathscr{X}_{\bar{v}}/W)$ as an $F$-crystal.

\begin{thm}{cf. \cite[Theorem 6.1]{ItoK}}
\label{4.12}
The $F$-crystal $H^4_{\crys}(\mathscr{X}_{\overline{v}}/W)$ is isomorphic to the $F$-crystal
\[
W(-2)^{\oplus 23-[E_{\p}:\Q_{p}]} \oplus (\mathscr{O}_{E_{\p}} \otimes_{\Z_p} W,\be)
\]
Here, the $F$-crystal $(\mathscr{O}_{E_{\p}} \otimes_{\Z_p} W,\be)$ is a $W$-module $\mathscr{O}_{E_{\p}} \otimes_{\Z_p} W$ equipped with an $\mathscr{O}_{E_{\p}}$-equivariant Frobenius $\varphi$ given by $\varphi(1) = \be$, and the element
\[
\be \in \mathscr{O}_{E_{\p}} \otimes_{\Z_p} W \simeq \prod_{0 \leq i \leq d-1}W_{i}
\]
is specified as follows; the $i$-th component $\be_i$ of $\be$ is
\[
\be_{i} = 
\begin{cases}
p^2 \pi_{\p} &(i=1) \\
p^2 \pi_{\p}^{-1} &(i=d') \\
p^2 &(i \not= 1,d')
\end{cases}
\]
where we put $d' = d/2+1$ if $d\not= 2$, and $d'=0$ if $d=2$.
\end{thm}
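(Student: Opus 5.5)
The plan is to combine Theorem \ref{4.9} with the explicit Breuil-Kisin modules computed above. By Proposition \ref{4.7} and the additivity of $\mathfrak{M}(-)$, we have
\[
\mathfrak{M}(H^4_{\et}(X_{\overline{K_v}},\Z_p)) \simeq \mathfrak{M}(\Z_p(-2))^{\oplus 23-[E_{\p}:\Q_p]} \oplus (\mathscr{O}_{E_{\p}}\otimes_{\Z_p}\mathfrak{S},\widetilde{\be}),
\]
where $\widetilde{\be}$ is given by Proposition \ref{4.11}. Theorem \ref{4.9} then identifies $H^4_{\crys}(\mathscr{X}_{\overline{v}}/W)$ with $\varphi^{*}(\mathfrak{M}/u\mathfrak{M})$. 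So the proof reduces to reducing each summand modulo $u$ and pulling back along the Frobenius of $W$.

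\textbf{Step 1: the trivial summand.} For $\mathfrak{M}(\Z_p(-2))=\mathfrak{S}$ with $\varphi(1)=p^2E(u)/E(0)$, setting $u=0$ gives $\varphi(1)=p^2$. Pulling back by $\varphi$ does not change this rank-one object, so we get $W(-2)$.

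\textbf{Step 2: the CM summand modulo $u$.} Using $\mathscr{O}_{E_{\p}}\otimes_{\Z_p}\mathfrak{S}\simeq\prod_i\mathfrak{S}_i$, reduction modulo $u$ gives $\prod_i W_i$. The factors $E(u)/E(0)$, $E_0(u)/E_0(0)$ and $E_{d/2}(u)/E_{d/2}(0)$ all become $1$ at $u=0$. Hence the reduction is $(\mathscr{O}_{E_{\p}}\otimes W,\be^{0})$ with
\[
\be^{0}_0=p^2\pi_{\p},\qquad \be^0_{d/2}=p^2\pi_{\p}^{-1},\qquad \be^0_i=p^2 \text{ otherwise.}
\]
Here one must keep track of how $\pi_{\p}$ sits in each $W_i$. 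The semilinear $\varphi$ on $\mathscr{O}_{E_{\p}}\otimes W$ sends $W_i$ to $W_{i+1}$, with indices taken mod $d$. So $\varphi(1)=\be^0$ should be read as the element whose $(i+1)$-st component is the image of $\be^0_i$. One should check against the conventions of \cite[Theorem 6.1]{ItoK} which indexing is intended.

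\textbf{Step 3: Frobenius pullback and index shift.} This is the step where the index bookkeeping matters most. Applying $\varphi^{*}$, i.e.\ tensoring along $\varphi:W\to W$, permutes the idempotents of $\prod_i W_i$ by $i\mapsto i+1$. This is because $\varphi\circ(\varphi^i\circ\iota_0)=\varphi^{i+1}\circ\iota_0$. The Frobenius of $\varphi^{*}\mathfrak{M}_0$ is $\varphi^{*}$ of that of $\mathfrak{M}_0$, so the defining element $\be^0$ is transported to an element whose nontrivial entries sit in positions $1$ and $d/2+1$. The reindexing also twists the coefficients by $\varphi$, but $\pi_{\p}\in\mathscr{O}_{E_{\p}}$ is fixed by the $\mathscr{O}_{E_{\p}}$-linear structure, so the coefficients survive unchanged. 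When $d=2$ we have $d/2+1=2\equiv 0 \pmod 2$, which explains $d'=0$ in that case.

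\textbf{Step 4: normalising the generator.} It remains to check that the resulting object is isomorphic to $(\mathscr{O}_{E_{\p}}\otimes W,\be)$ with exactly the stated $\be$. Changing the generator $1$ to a unit $w$ replaces $\be$ by $\be\,\varphi(w)/w$. So if the shifted positions or units disagree with the stated normalisation, we would fix this by solving $\varphi(w)/w=$(unit) in $\prod W_i$. This is possible by Lang's lemma over $W(\overline{\F_p})$.

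The main obstacle is Step 3: getting the index shift and the direction of $\varphi$ right, so that the entries land in positions $1$ and $d'$ rather than $-1$ and $d/2-1$.
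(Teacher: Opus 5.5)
Your proposal is correct and follows the paper's route: the paper's proof just cites Theorem \ref{4.9} and Proposition \ref{4.11}, and your Steps 1--3 spell out the reduction modulo $u$ and the index shift $i \mapsto i+1$ under $\varphi^{*}$, which moves the entries $p^2\pi_{\p}$ and $p^2\pi_{\p}^{-1}$ from positions $0$ and $d/2$ to positions $1$ and $d'$. The convention worry in Step 2 and the Lang's-lemma renormalisation in Step 4 are unnecessary. With the literal reading $\varphi(1)=\be$, which is the one used in Section \ref{sec5}, the computation already gives exactly the stated $\be$; in any case, a unit change of generator could never move the positions of the non-unit entries.
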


\begin{proof}
It holds from Theorem \ref{4.9}, Proposition \ref{4.11}. 
\end{proof}

\section{Proof of Theorem \ref{1.4}}
\label{sec5}
\begin{proof}
We assume $p \geq 7$. The cycle map 
\[
\cl^{2}_{\crys}: \CH^2 (\mathscr{X}_{\overline{v}}) \otimes_{\Z} W \to H^4_{\crys}(\mathscr{X}_{\overline{v}}/W)
\]
is injective and preserves pairing. Since Artin invariant $a$ satisfies $\disc \CH^2 (\mathscr{X}_{\overline{v}}) = \pm p^{2a}$, it is equal to $[k(\q) : \F_{\p}] = [k(\p) : \F_{\p}]/2 = d/2$. (Note that the extension $E_{\p}/F_{\q}$ is unramified.)

By theorem \ref{4.12}, we have an isomorphism of $F$-crystals
\[
H^{4}_{\crys} (\mathscr{X}_{\overline{v}}/W) \simeq W(-2)^{\oplus 23-[E_{\p}:\Q_p]} \oplus (\mathscr{O}_{E_{\p}} \otimes_{\Z_p} W, \be)
\]
where $\be \in \mathscr{O}_{E_{\p}} \otimes_{\Z_p} W$ is taken as in Theorem \ref{4.12}. By \cite[Proposition (5.2.2)]{Yang}, we have
\[
A(\mathscr{X}_{\overline{v}}) \otimes_{\Z} \Z_p \simeq H^{4}_{\crys}(\mathscr{X}_{\overline{v}}/W)^{\varphi = p^2}. 
\]
Note that this isomorphism is of $\Z_p$-modules. Hence the length of cokernel of the following injection
\[
H^{4}_{\crys}(\mathscr{X}_{\overline{v}}/W)^{\varphi = p^2} \otimes_{\Z_p} W \to H^{4}_{\crys}(\mathscr{X}_{\overline{v}}/W)
\]
as $W$-modules is equal to Artin invariant $a$. 

Recall that $W(-2)$ is the $F$-crystal of rank $1$ whose underlying $W$-module is $W$ and Frobenius $\varphi$ is given by $\varphi(1) = p^2$. Hence $1 \in W$ is $\Z_p$-basis of $W(-2)^{\varphi=p^2}$, and the canonical map 
\[
W(-2)^{\varphi = p^2} \otimes_{\Z_p} W \to W
\]
is an isomorphism. Therefore, the Artin invariant $a$ is equal to the length of the cokernel of the following injection
\[
(\mathscr{O}_{E_{\p}} \otimes_{\Z_p} W,\be)^{\varphi = p^2} \otimes_{\Z_p} W \to (\mathscr{O}_{E_{\p}} \otimes_{\Z_p} W,\be) 
\]
as a $W$-module.

We shall compute a $\Z_p$-basis of $(\mathscr{O}_{E_{\p}} \otimes_{\Z_p} W,\be)^{\varphi= p^2}$. An element $\displaystyle x \in \mathscr{O}_{E_{\p}} \otimes_{\Z_p} W \simeq \prod_{0 \leq i \leq d-1} W_i$ satisfies $\varphi (x)= p^2 x$ if and only if the $i$-th components $x_i \in W_i\ (1 \leq i \leq d-1)$ satisfy
\[
\begin{cases}
\pi_{\p} \varphi (x_0) =x_1 & \\
\pi_{\p}^{-1} \varphi (x_{d/2}) =x_{d/2+1} & \\
 \varphi (x_i) =x_{i+1} & (i \not= 0,d/2)
\end{cases}
\]
where we put $x_d := x_0$. From these equations, we have $\varphi^d (x_{d/2+1}) = x_{d/2+1}$ and $x_{d/2+1} \in \mathscr{O}_{E_{\p}} \subset W_{d/2+1}$ where we put $W_d := W_0$. Conversely, if $x_{d/2+1}$ is an element of $\mathscr{O}_{E_{\p}} \subset W_{d/2+1}$, then for the elements $x_i \in W_i \ (0 \leq i \leq d-1)$ determined by the above equations, the element $x= (x_0, \dots,x_{d-1})$ is in $(\mathscr{O}_{E_{\p}} \otimes_{\Z_p} W, \be)^{\varphi = p^2}$. Thus, the following composite 
\[
\mathscr{O}_{E_{\p}} \otimes_{\Z_p} W \simeq \prod_{0 \leq i \leq d-1} W_i \to W_{d/2 +1}
\]
where the second map is the projection, maps $(\mathscr{O}_{E_{\p}} \otimes_{\Z_p} W, \be)^{\varphi =p^2}$ isomorphically onto $\mathscr{O}_{E_{\p}}$. We denote by $f$ induced isomorphism of $\Z_p$-modules $(\mathscr{O}_{E_{\p}} \otimes_{\Z_p} W,\be)^{\varphi =p^2} \simeq \mathscr{O}_{E_{\p}}$. The following composite 
\[
\mathscr{O}_{E_{\p}} \otimes_{\Z_p} W \overset{f^{-1} \otimes \id_W }\to (\mathscr{O}_{E_{\p}} \otimes_{\Z_p} W,\be)^{\varphi =p^2} \otimes_{\Z_p} W \to \mathscr{O}_{E_{\p}} \otimes_{\Z_p} W 
\]
is equal to the $W$-linear map 
\[
g_{\pi}: \mathscr{O}_{E_{\p}} \otimes_{\Z_p} W \to \mathscr{O}_{E_{\p}} \otimes_{\Z_p} W; x \mapsto \pi x .
\] 
Here $\displaystyle \pi \in \mathscr{O}_{E_{\p}} \otimes_{\Z_p} W \simeq \prod_{0 \leq i \leq d-1} W_i$ is the element whose $i$-th component $\pi_i$ is 
\[
\pi_{i} =
\begin{cases}
1 & (i=0) \\
\pi_{\p} & (1 \leq i \leq d/2) \\
1 & (d/2+1 \leq i \leq d-1)
\end{cases}
\]
Therefore, to show Theorem \ref{1.4}, it suffices to show that the length $N$ of the cokernel of $g_{\pi}$ as a $W$-module is equal to $d/2$. The length of the cokernel of the injection 
\[
W_i \to W_i;\ \ \ \  x \mapsto \pi_{\p} x
\]
as a $W$-module is equal to $1$ since the extension $\Frac(W_i)/\Frac(W)$ is totally ramified and $\pi_{\p} \in W_i$ is a uniformizer of $\Frac(W_i)$. From this, we have
\[
N = d/2 =[k(\q):\F_p]
\]

The proof of Theorem \ref{1.4} is complete.
\end{proof}

\section{Application}

\begin{dfn}{\cite[Definition 2.7]{LazaZheng}} \label{6.1}
An automorphism of a smooth cubic fourfold $X$ is called \emph{symplectic}, if the induced automorphism on $F(X)$ is symplectic. Equivalently, the induced action on $H^{3,1}$ is the identity. An automorphism is called \emph{non-symplectic}, if the induced action on $H^{3,1}$ is not identity. 
\end{dfn}

\begin{cor}\label{6.2}
Let $X_{\C}$ be a cubic fourfold defined by a homogeneous polynomial $F$ as
\[
F:= X_0^2 X_1 + X_1^2 X_2 + X_2^2 X_3 + X_3^2 X_4 + X_4^2 X_5 +X_5^3.
\]

Then $X_{\C}$ has a CM field $\Q (\zeta_{32})$, where $\zeta_{32} \in \C$ is the primitive root $32$-th of unity. Therefore, the rank of $\mathscr{X}_{\overline{v}}$ is $23-16 = 7$ if $p \not \equiv -1 \pmod {32}$.

Moreover, if $p \equiv -1 \pmod {32}$ and satisfying the assumption of the second theorem, then the reduction of this is supersingular (i.e., the rank of $\mathscr{X}_{\overline{v}}$ is $23$) and its Artin invariant $\sigma_0$ is $1$. 
\end{cor}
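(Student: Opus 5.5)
The plan is to produce a non-symplectic automorphism of order $32$, deduce that $T(X_{\C})\otimes\Q$ is a $1$-dimensional $\Q(\zeta_{32})$-vector space, and then read everything off from Theorems \ref{1.3} and \ref{1.4} via the splitting of $p$ in $\Q(\zeta_{32})$.

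\emph{Step 1: the automorphism.} I look for a diagonal automorphism $g : X_i \mapsto \zeta_{32}^{w_i}X_i$ that multiplies $F$ by a scalar $\zeta_{32}^{\lambda}$. This requires $2w_0+w_1 \equiv 2w_1+w_2 \equiv \cdots \equiv 2w_4+w_5 \equiv 3w_5 \equiv \lambda \pmod{32}$. Taking $\lambda=0$ and $w_0=1$ gives
\[
(w_0,\dots,w_5)=(1,-2,4,-8,16,0),
\]
which indeed works, since $2\cdot 16 \equiv 0 \pmod{32}$ and $3\cdot 0 = 0$. So $g$ has order $32$ and preserves $F$.

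By Griffiths' residue description, $H^{3,1}(X_{\C})$ is spanned by $\mathrm{Res}(\Omega/F^2)$, where $\Omega=\sum_i (-1)^i X_i\, dX_0\wedge\cdots\widehat{dX_i}\cdots\wedge dX_5$. Hence $g^*$ acts on $H^{3,1}$ by $\det(g)\,\zeta_{32}^{-2\lambda}=\zeta_{32}^{1-2+4-8+16}=\zeta_{32}^{11}$, a primitive $32$-nd root of unity. In particular $g$ is non-symplectic in the sense of Definition \ref{6.1}.

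\emph{Step 2: CM by $\Q(\zeta_{32})$.} The automorphism $g^*$ preserves $A(X_{\C})$ and $T(X_{\C})$, and it acts on $T(X_{\C})\otimes\Q$ as a Hodge endomorphism. Since $T(X_{\C})\otimes\Q$ is an irreducible Hodge structure containing $H^{3,1}$ in its complexification, the eigenvalue $\zeta_{32}^{11}$ forces the minimal polynomial of $g^*$ on $T$ to be the $32$-nd cyclotomic polynomial. Therefore $\Q[g^*]\simeq\Q(\zeta_{32})$ acts on $T(X_{\C})\otimes\Q$ and makes it a $\Q(\zeta_{32})$-vector space.

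Its dimension is at least $1$, and $\rank T\le 23-1=22<32$, so the dimension is exactly $1$ and $\rank T=16$. Now $E(X_{\C})$ is a field containing $\Q(\zeta_{32})$, and by Zarhin $[E(X_{\C}):\Q]$ divides $\dim_{\Q} T=16$. Hence $E(X_{\C})=\Q(\zeta_{32})$, $X_{\C}$ has CM by $E=\Q(\zeta_{32})$ with $F=\Q(\zeta_{32})^+$, and $\rank A(X_{\C})=7$.

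\emph{Step 3: splitting of $p$.} For odd $p$, the decomposition group of $p$ in $\Gal(E/\Q)\simeq(\Z/32)^\times$ is $\langle p\rangle$. A prime $\q$ of $F$ splits in $E$ if and only if complex conjugation $-1\notin\langle p\rangle$.

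I claim $-1\in\langle p\rangle$ exactly when $p\equiv -1\pmod{32}$. Write $(\Z/32)^\times=\{\pm1\}\times\langle 5\rangle$ with $5$ of order $8$. If $p=\pm 5^k$ with $5^k\ne\pm1$, then every power $p^m=\pm 5^{km}$ that is $\pm1$ must satisfy $5^{km}=1$; for $p=-5^k$ this forces $m$ even, so $p^m=+1$. The remaining cases are $p\equiv 1$ or $p\equiv 15\equiv -5^4$, whose powers never equal $-1$.

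Consequently, for $p\not\equiv -1\pmod{32}$ (and $p\ge3$), $\q$ splits and Theorem \ref{1.3}(1) gives rank $23-16=7$. For $p\equiv-1\pmod{32}$, $\q$ is inert, so Theorem \ref{1.3}(2) gives supersingularity. Moreover the residue degree of $p$ in $E$ is the order of $p$ in $(\Z/32)^\times$, namely $2$, hence $[k(\q):\F_p]=1$. Under the hypotheses of Theorem \ref{1.4} (note $p\equiv-1 \pmod{32}$ forces $p\ge 31\ge7$), the Artin invariant is $\sigma_0=1$.

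The main obstacle is Step 2: the passage from the eigenvalue computation on $H^{3,1}$ to the statement that $\Q(\zeta_{32})$ is the full Hodge endomorphism field with $T$ of $E$-dimension $1$. This needs the irreducibility of $T\otimes\Q$ and Zarhin's theorem that $E(X_{\C})$ is a field whose degree divides $\rank T$. The residue computation itself only needs the standard sign convention for $\Omega$.
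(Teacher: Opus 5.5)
Your proposal is correct and follows the same route as the paper. The paper uses a diagonal automorphism of order $32$; its $f$, written with $\zeta_{96}$-weights, generates the same cyclic group modulo scalars as your $g$. That automorphism acts on $H^{3,1}$ by a primitive $32$-nd root of unity, the bound $16e\le 23-\rho$ forces $e=1$, and Theorems \ref{1.3} and \ref{1.4} are then applied via the splitting of $p$ in $\Q(\zeta_{32})$. You in fact supply several things the paper leaves implicit: the irreducibility/Zarhin argument giving $E(X_{\C})=\Q(\zeta_{32})$, the restriction to $p\geq 3$, and the decomposition-group computation.

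The one blemish is cosmetic. In Step 3 the leftover cases are $p\equiv \pm 1 \pmod{32}$, not $p\equiv 1, 15$: since $5^4\equiv 17\not\equiv 1$, the class $15\equiv -5^4$ is already covered by your first case. This does not affect the conclusion.
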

\begin{proof}
First, $X_{\C}$ is smooth. Take a (diagonal) automorphism $f \in \Aut (X_{\C})$ as
\[
f: X \to X ; X_i \mapsto \lam_i X_i \ \ \ \ (i=0,\dots,5),
\]
where $\lam_i \in \C$. By checking the coefficients, we obtain the following
\[
\lam_0^2 \lam_1 = \lam_1^2 \lam_2 = \lam_2^2 \lam_3 =\lam_3^2 \lam_4 =\lam_4^2 \lam_5 = \lam_5^3 =1. 
\]
Therefore, $\lam_i$ satisfy
\[
\lam_0 = \zeta^{-1}_{96},\lam_1 = \zeta_{48} = \zeta_{96}^{2},\lam_2 = \zeta_{24} =\zeta_{96}^{-4},\lam_3 = \zeta_{12} = \zeta_{96}^{8},\lam_4 = \zeta_{6} = \zeta_{96}^{-16}, \lam_5 = \zeta_{3} = \zeta_{96}^{32},
\]
where $\zeta_{n}$ is primitive $n$-root of unity. Thus, we have $f \in \Aut (X_{\C}) \simeq \Z/32\Z$ as
\[
f: (X_0:X_1:X_2:X_3:X_4:X_5) \mapsto (\zeta_{96}^{-1}X_0:\zeta_{96}^{2}X_1:\zeta_{96}^{-4}X_2:\zeta_{96}^{8}X_3:\zeta_{96}^{-16}X_4:\zeta_{96}^{32}X_5)
\]
We can calculate the order of the induced automorphism $f^{*}$ in $H^{3,1}(X_{\C})$ using the method of \cite[lemma 3.2]{Fu}. Equivalently, we have a non-symplectic automorpshim $f$ which induces
\[
f^{*} : H^{3,1}(X_{\C}) \to H^{3,1}(X_{\C}); x \mapsto \zeta_{96}^{21-2j} \cdot x.
\]
Therefore, using the inequality of rank and Betti number, we have
\[
e \varphi (32) = 16e  \leq 23 - \rho
\]
where $\varphi(n)$ is the cyclotomic polynomial of $n$ and $e$ is the dimension of $T(X_{\C})$ as a $\Q(\zeta_{32})$-vector space and $\rho$ is the rank of $X_{\C}$. So $e=1$, and $X_{\C}$ has a CM field $\Q(\zeta_{32})$. 

Let $X_{K}$ be a mod $p$ reduction of $X_{\C}$. If $p \equiv -1 \pmod {32}$, then the finite place $\q$ of $F$ does not split in $E$. Since the extension $E/\Q$ is Galois, the assumption of the second theorem is satisfied. So we have a cubic fourfold $\mathscr{X}_{\overline{v}}$ over $\overline{k(v)} $ which Artin invariant is $\sigma_0 = [k(\q) : \F_p] = 1 $ when $p \equiv -1 \pmod {32}$.
\end{proof}

\begin{rmk} \label{6.3}
In fact, we can find the cycles of $\rank A(X_{\C}) = 7$. Let $H$ be a hypersurface in $\P^5$ defined by $X_1 = 0$. Then, the intersection $F=H=0$ is cubic threefold defined by $G = X_2^2 X_3 + X_3^2 X_4 + X_4^2 X_5 +X_5^3 $. This cubic threefold is also a projective cone over (smooth) cubic surface $S$. It is known that $\dim H^2 (S) = 7$ and 
\[
7=\dim H^{2} (S) \leq \rank A(X_{\C}) \leq \dim H^4 (X) =23 
\]
So it suffices to take the cycle of $H^2 (S)$.
\end{rmk}

Next, we have an another proof of following lemma.

\begin{cor}{cf. \cite[Lemma (5.2.8)]{Yang}}
Let $E$ be an imaginary quadratic extension of $\Q$. For all but finitely many $\q$ which non-split in $E$, there exists a supersingular cubic fourfold $\mathscr{X}_{\overline{v}}$ over $\overline{k(v)}$ with Artin invariant $\sigma_0 = 1$ such that $\Phi : \mathscr{S}_{\overline{v}}^{[2]} \overset{\sim}\to F(\mathscr{X}_{\overline{v}})$ for some supersingular K3 surface $\mathscr{S}_{\overline{v}}$ with Artin invariant $1$.
\end{cor}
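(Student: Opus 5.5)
The plan is to start from a K3 surface with CM by the imaginary quadratic field $E$ and transport its properties to a cubic fourfold. The two routes are Theorem \ref{1.4} and the K3 theorems \ref{1.1 }, \ref{1.2}. Since $E$ is imaginary quadratic, $F=\Q$, so $\q=p$ and $[k(\q):\F_p]=1$. Any Artin invariant produced by Theorem \ref{1.4} (or Theorem \ref{1.2}) is therefore automatically $1$. The work lies in producing the right CM objects and checking the hypotheses for all but finitely many $p$.

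\textbf{Step 1 (a CM cubic fourfold over $\C$).} Choose a rank-$21$ cubic fourfold $X_{\C}$ with $T(X_{\C})\otimes\Q$ a one-dimensional $E$-vector space, so that $E(X_{\C})\simeq E$. By Remark \ref{2.12} such fourfolds correspond to singular K3 surfaces. More precisely, I would take a singular K3 surface $S_{\C}$ with $T(S_{\C})\otimes\Q\simeq E$ and a cubic fourfold $X_{\C}$ with $F(X_{\C})\simeq S_{\C}^{[2]}$, so that $T(X_{\C})\simeq T(S_{\C})$ up to twist. The existence of such an $X_{\C}$ is the lattice-theoretic statement that $T(S_{\C})$ embeds into $\langle h^2\rangle^{\perp}$ compatibly with the $S^{[2]}$ structure; this should follow from the surjectivity of the period map for cubic fourfolds, after choosing $T(S_{\C})$ in the appropriate class of binary forms of discriminant related to $\disc E$. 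Both varieties have models over a number field $K\supset E$. By Matsumoto's result (Remark \ref{1.5}) the K3 has potential good reduction at every $v$ outside a finite set, and I will assume the cubic has good reduction as a cubic fourfold at those places as well.

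\textbf{Step 2 (hypotheses of Theorem \ref{1.4}).} The discriminant $\disc \CH^2(X_{\C})$ is a fixed nonzero integer, so it is prime to $p$ for all but finitely many $p$. Likewise $\End_{\Hdg}(T(X_{\C}))$ is an order in $\mathscr{O}_E$ of some conductor $f$, so the isomorphism with $\mathscr{O}_E\otimes\Z_{(p)}$ holds for $p\nmid f$. Discarding also $p<7$ and the finitely many places of bad reduction, Theorem \ref{1.3}(2) shows that for every non-split $p$ the reduction $\mathscr{X}_{\overline{v}}$ is supersingular. Theorem \ref{1.4} then gives $\sigma_0=[k(\q):\F_p]=1$. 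In the same way Theorems \ref{1.1 } and \ref{1.2} give that $\mathscr{S}_{\overline{v}}$ is supersingular with Artin invariant $1$.

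\textbf{Step 3 (the isomorphism $\Phi$).} This is the main obstacle. Over $K$ (after a finite extension) I expect $F(X_K)\simeq S_K^{[2]}$ to descend. Both sides extend to smooth families over $\mathscr{O}_{K_v}$: the relative Fano scheme of lines on one side, and the relative Hilbert scheme of the K3 model on the other. One then needs the isomorphism to specialize to $\Phi:\mathscr{S}_{\overline{v}}^{[2]}\simeq F(\mathscr{X}_{\overline{v}})$. My first attempt would be a Matsusaka--Mumford type argument: the generic isomorphism extends to the special fiber because neither special fiber is ruled, or because the isomorphism preserves a polarization, making the graph closure the graph of an isomorphism. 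Failing that, I would argue by a Torelli-type comparison of the supersingular K3 crystals. Both $H^2_{\crys}(\mathscr{S}_{\overline{v}}^{[2]})$ and $H^2_{\crys}(F(\mathscr{X}_{\overline{v}}))\supset H^4_{\crys}(\mathscr{X}_{\overline{v}}/W(1))$ contain supersingular K3 crystals of Artin invariant $1$. Since there is a unique such crystal up to isomorphism, one can invoke Yang's construction \cite[Lemma (5.2.8)]{Yang} to realize the isomorphism geometrically.
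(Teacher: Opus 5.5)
Your outline is the paper's argument. The paper takes a CM pair $S_{\C}$, $X_{\C}$ with $S_{\C}^{[2]}\simeq F(X_{\C})$ from \cite[Lemma (5.2.7)]{Yang}, applies Theorem~\ref{1.4} to $X_{\C}$ and \cite[Theorem 1.2]{ItoK} to $S_{\C}$, and then asserts in one sentence that the isomorphism specializes. Your Step~2 is more careful than the paper about why only finitely many primes are lost. You also need not \emph{assume} good reduction of the cubic: spreading out its defining equation gives a smooth cubic model at all but finitely many $v$. In Step~1, however, surjectivity of the period map only produces a cubic with the right Hodge structure. Getting an honest isomorphism $F(X_{\C})\simeq S_{\C}^{[2]}$, not just a Hodge isometry or a birational map, requires the lattice and ample-cone analysis that is the content of \cite[Lemma (5.2.7)]{Yang}. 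You should cite that rather than write ``this should follow''.

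The real gap is Step~3, and neither of your two routes closes it.

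\textbf{The fallback route.} It is circular: the corollary \emph{is} \cite[Lemma (5.2.8)]{Yang}, which the paper is reproving. Moreover, uniqueness of the supersingular K3 crystal with $\sigma_0=1$ gives no geometric isomorphism without a crystalline Torelli theorem for $\HK$-type varieties, which you do not have.

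\textbf{The Matsusaka--Mumford route.} Non-ruledness of the special fibers is true, since both have trivial canonical class. But it only gives a birational map $\phi_0\colon \mathscr{S}^{[2]}_{\overline{v}}\dashrightarrow F(\mathscr{X}_{\overline{v}})$, which is then an isomorphism in codimension one. For fourfolds this is not enough: such a map can be a nontrivial composite of Mukai flops. One example is a flop along $C^{[2]}\simeq\P^2$ for a $(-2)$-curve $C$ that becomes algebraic only on the supersingular reduction, where the Picard rank of the K3 jumps from $20$ to $22$.

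The polarization clause needs more than the fact that $\phi_K$ respects polarizations on the generic fiber. Let $g$ be the Pl\"ucker class, which is ample on both fibers of $F(\mathscr{X})$. You must show that the line bundle on $\mathscr{S}^{[2]}$ extending $\phi_K^{*}g$ is still ample on $\mathscr{S}^{[2]}_{\overline{v}}$, i.e.\ that no wall divisor of the special fiber passes through it. Only then does $\phi_0$ send an ample class to an ample class and become an isomorphism. This is plausibly provable for $p\gg 0$, by controlling the classes that become algebraic on the reduction, and that is where ``all but finitely many'' would naturally be used. It needs an actual argument, though, and the paper's ``we can specialize the isomorphism'' leaves exactly the same point unjustified.
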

\begin{proof}
For every $E$, by \cite[Lemma (5.2.7)]{Yang}, we can take a complex K3 surface $S_{\C}$ with CM field $E$ and a complex cubic fourfold $X_{\C}$ with CM field $E$ such that they satisfy $S_{\C}^{[2]} \simeq F(X_{\C})$. We apply the Theorem \ref{1.4} to $X_{\C}$ and Kazuhiro Ito's theorem \cite[Theorem 1.2]{ItoK} to $S_{\C}$. Then, we can obtain the supersingular cubic fourfold $\mathscr{X}_{\overline{v}}$ over $\overline{k(v)}$ with Artin invariant $\sigma_0 = 1$, and  supersingular K3 surface $\mathscr{S}_{\overline{v}}$ over $\overline{k(v)}$.  We can specialize the isomorphism $S_{\C}^{[2]} \simeq F(X_{\C})$ to $\mathscr{S}_{\overline{v}}^{[2]} \simeq F(\mathscr{X}_{\overline{v}})$. 
\end{proof}

\bibliography{bib}

@article{Shiodakatsura,
  title={ON FERMAT VARIETIES},
  author={Tetsuji Shioda and Toshiyuki Katsura},
  journal={Tohoku Mathematical Journal, Second Series},
  volume={31},
  number={1},
  pages={97-115},
  year={1979},
  doi={10.2748/tmj/1178229881}
}

@incollection {Liedtke,
    AUTHOR = {Liedtke, Christian},
     TITLE = {Lectures on supersingular {K}3 surfaces and the crystalline
              {T}orelli theorem},
 BOOKTITLE = {K3 surfaces and their moduli},
    SERIES = {Progr. Math.},
    VOLUME = {315},
     PAGES = {171--235},
 PUBLISHER = {Birkh\"auser/Springer, [Cham]},
      YEAR = {2016},
      ISBN = {978-3-319-29958-7; 978-3-319-29959-4},
   MRCLASS = {14F30 (14J10 14J28 14M20)},
  MRNUMBER = {3524169},
MRREVIEWER = {Christopher\ David\ Lazda},
       DOI = {10.1007/978-3-319-29959-4\_8},
       URL = {https://doi.org/10.1007/978-3-319-29959-4_8},
}

@article{ItoK,
  author={Ito, Kazuhiro},
  journal={International Mathematics Research Notices}, 
  title={On the Supersingular Reduction of {K3} Surfaces with Complex Multiplication}, 
  year={2020},
  volume={2020},
  number={10},
  pages={7306-7346},
  doi={10.1093/imrn/rny210}}

@article{ItoR, 
      author={Rikuto Ito},
      title={Arithmetic Period Map and Complex Multiplication for Cubic Fourfolds}, 
      year={2025},
      eprint={2512.11355},
      archivePrefix={arXiv},
      primaryClass={math.NT},
      note={available at \url{https://arxiv.org/abs/2512.11355}}, 
}

@article{Yang, 
title = {On irreducible symplectic varieties of $\mathrm{K3}^{[n]}$-type in positive characteristic},
journal = {Advances in Mathematics},
volume = {417},
pages = {108930},
year = {2023},
issn = {0001-8708},
doi = {https://doi.org/10.1016/j.aim.2023.108930},
url = {https://www.sciencedirect.com/science/article/pii/S0001870823000737},
author = {Ziquan Yang},
}

@incollection{Ogus,
    AUTHOR = {Ogus, Arthur},
     TITLE = {Supersingular {$K3$}\ crystals},
 BOOKTITLE = {Journ\'ees de {G}\'eom\'etrie {A}lg\'ebrique de {R}ennes
              ({R}ennes, 1978), {V}ol. {II}},
    SERIES = {Ast\'erisque},
    VOLUME = {64},
     PAGES = {3--86},
 PUBLISHER = {Soc. Math. France, Paris},
      YEAR = {1979},
   MRCLASS = {14J25 (14B10 14F30)},
  MRNUMBER = {563467},
MRREVIEWER = {G.\ Horrocks},
}

@article {Zucker,
    AUTHOR = {Zucker, Steven},
     TITLE = {The {H}odge conjecture for cubic fourfolds},
   JOURNAL = {Compositio Math.},
  FJOURNAL = {Compositio Mathematica},
    VOLUME = {34},
      YEAR = {1977},
    NUMBER = {2},
     PAGES = {199--209},
      ISSN = {0010-437X,1570-5846},
   MRCLASS = {14C30},
  MRNUMBER = {453741},
MRREVIEWER = {C.\ A. M. Peters},
       URL = {http://www.numdam.org/item?id=CM_1977__34_2_199_0},
}

@book{Serrelf, 
    AUTHOR = {Serre, Jean-Pierre},
     TITLE = {Local fields},
    SERIES = {Graduate Texts in Mathematics},
    VOLUME = {67},
      NOTE = {Translated from the French by Marvin Jay Greenberg},
 PUBLISHER = {Springer-Verlag, New York-Berlin},
      YEAR = {1979},
     PAGES = {viii+241},
      ISBN = {0-387-90424-7},
   MRCLASS = {12Bxx},
  MRNUMBER = {554237},
}

@article{Rizov, 
    AUTHOR = {Rizov, Jordan},
     TITLE = {Kuga-{S}atake abelian varieties of {K}3 surfaces in mixed
              characteristic},
   JOURNAL = {J. Reine Angew. Math.},
  FJOURNAL = {Journal f\"ur die Reine und Angewandte Mathematik. [Crelle's
              Journal]},
    VOLUME = {648},
      YEAR = {2010},
     PAGES = {13--67},
      ISSN = {0075-4102,1435-5345},
   MRCLASS = {14K10 (14D22 14G35 14J10 14J28)},
  MRNUMBER = {2774304},
MRREVIEWER = {Lars\ Halvard\ Halle},
       DOI = {10.1515/CRELLE.2010.078},
       URL = {https://doi.org/10.1515/CRELLE.2010.078},
}

@article{Chen,
    AUTHOR = {Chen, Meng},
     TITLE = {Complex multiplication, rationality and mirror symmetry for
              abelian varieties},
   JOURNAL = {J. Geom. Phys.},
  FJOURNAL = {Journal of Geometry and Physics},
    VOLUME = {58},
      YEAR = {2008},
    NUMBER = {5},
     PAGES = {633--653},
      ISSN = {0393-0440,1879-1662},
   MRCLASS = {14K22 (14J32 32J25 32Q15)},
  MRNUMBER = {2419693},
MRREVIEWER = {Noriko\ Yui},
       DOI = {10.1016/j.geomphys.2008.01.001},
       URL = {https://doi.org/10.1016/j.geomphys.2008.01.001},
}

@article{Kats,
    AUTHOR = {Katz, Nicholas M. and Messing, William},
     TITLE = {Some consequences of the {R}iemann hypothesis for varieties
              over finite fields},
   JOURNAL = {Invent. Math.},
  FJOURNAL = {Inventiones Mathematicae},
    VOLUME = {23},
      YEAR = {1974},
     PAGES = {73--77},
      ISSN = {0020-9910,1432-1297},
   MRCLASS = {14G10},
  MRNUMBER = {332791},
MRREVIEWER = {S.\ S.\ Shatz},
       DOI = {10.1007/BF01405203},
       URL = {https://doi.org/10.1007/BF01405203},
}

@article{AGHM, 
    AUTHOR = {Andreatta, Fabrizio and Goren, Eyal Z. and Howard, Benjamin
              and Madapusi Pera, Keerthi},
     TITLE = {Faltings heights of abelian varieties with complex
              multiplication},
   JOURNAL = {Ann. of Math. (2)},
  FJOURNAL = {Annals of Mathematics. Second Series},
    VOLUME = {187},
      YEAR = {2018},
    NUMBER = {2},
     PAGES = {391--531},
      ISSN = {0003-486X,1939-8980},
   MRCLASS = {11G15 (11F41 11G18 14G35 14G40)},
  MRNUMBER = {3744856},
MRREVIEWER = {Martin\ Orr},
       DOI = {10.4007/annals.2018.187.2.3},
       URL = {https://doi.org/10.4007/annals.2018.187.2.3},
}

@article{Kisin,
    AUTHOR = {Kisin, Mark},
     TITLE = {Integral models for {S}himura varieties of abelian type},
   JOURNAL = {J. Amer. Math. Soc.},
  FJOURNAL = {Journal of the American Mathematical Society},
    VOLUME = {23},
      YEAR = {2010},
    NUMBER = {4},
     PAGES = {967--1012},
      ISSN = {0894-0347,1088-6834},
   MRCLASS = {11G18 (14G35)},
  MRNUMBER = {2669706},
MRREVIEWER = {Jeffrey\ D.\ Achter},
       DOI = {10.1090/S0894-0347-10-00667-3},
       URL = {https://doi.org/10.1090/S0894-0347-10-00667-3},
}

@article {BMSa,
    AUTHOR = {Bhatt, B. and Morrow, M. and Scholze, P.},
     TITLE = {Integral {$p$}-adic {H}odge theory---announcement},
   JOURNAL = {Math. Res. Lett.},
  FJOURNAL = {Mathematical Research Letters},
    VOLUME = {22},
      YEAR = {2015},
    NUMBER = {6},
     PAGES = {1601--1612},
      ISSN = {1073-2780,1945-001X},
   MRCLASS = {14F20 (11G25 14F30 14F40 14G20 14G22)},
  MRNUMBER = {3507252},
MRREVIEWER = {Christopher\ David\ Lazda},
       DOI = {10.4310/MRL.2015.v22.n6.a3},
       URL = {https://doi.org/10.4310/MRL.2015.v22.n6.a3},
}

@article {BMS,
    AUTHOR = {Bhatt, Bhargav and Morrow, Matthew and Scholze, Peter},
     TITLE = {Integral {$p$}-adic {H}odge theory},
   JOURNAL = {Publ. Math. Inst. Hautes \'Etudes Sci.},
  FJOURNAL = {Publications Math\'ematiques. Institut de Hautes \'Etudes
              Scientifiques},
    VOLUME = {128},
      YEAR = {2018},
     PAGES = {219--397},
      ISSN = {0073-8301,1618-1913},
   MRCLASS = {14F30},
  MRNUMBER = {3905467},
MRREVIEWER = {Daniel\ Robert\ Gulotta},
       DOI = {10.1007/s10240-019-00102-z},
       URL = {https://doi.org/10.1007/s10240-019-00102-z},
}

@article {Madapusi,
    AUTHOR = {Madapusi Pera, Keerthi},
     TITLE = {The {T}ate conjecture for {K}3 surfaces in odd characteristic},
   JOURNAL = {Invent. Math.},
  FJOURNAL = {Inventiones Mathematicae},
    VOLUME = {201},
      YEAR = {2015},
    NUMBER = {2},
     PAGES = {625--668},
      ISSN = {0020-9910,1432-1297},
   MRCLASS = {14J28 (11G10 14G17 14G35 14K15)},
  MRNUMBER = {3370622},
MRREVIEWER = {G.\ K.\ Sankaran},
       DOI = {10.1007/s00222-014-0557-5},
       URL = {https://doi.org/10.1007/s00222-014-0557-5},
}

@article {ChiarellottoLazdaLiedtke,
    AUTHOR = {Chiarellotto, Bruno and Lazda, Christopher and Liedtke,
              Christian},
     TITLE = {A {N}\'eron-{O}gg-{S}hafarevich criterion for {K}3 surfaces},
   JOURNAL = {Proc. Lond. Math. Soc. (3)},
  FJOURNAL = {Proceedings of the London Mathematical Society. Third Series},
    VOLUME = {119},
      YEAR = {2019},
    NUMBER = {2},
     PAGES = {469--514},
      ISSN = {0024-6115,1460-244X},
   MRCLASS = {14J28 (11G25 14F20 14F30 14G20)},
  MRNUMBER = {3959051},
MRREVIEWER = {Sho\ Tanimoto},
       DOI = {10.1112/plms.12237},
       URL = {https://doi.org/10.1112/plms.12237},
}

@article {Taelman,
    AUTHOR = {Taelman, Lenny},
     TITLE = {K3 surfaces over finite fields with given {$L$}-function},
   JOURNAL = {Algebra Number Theory},
  FJOURNAL = {Algebra \& Number Theory},
    VOLUME = {10},
      YEAR = {2016},
    NUMBER = {5},
     PAGES = {1133--1146},
      ISSN = {1937-0652,1944-7833},
   MRCLASS = {14J28 (11G25 14G15)},
  MRNUMBER = {3531364},
MRREVIEWER = {Sho\ Tanimoto},
       DOI = {10.2140/ant.2016.10.1133},
       URL = {https://doi.org/10.2140/ant.2016.10.1133},
}

@article{FU,
    title={CLASSIFICATION OF POLARIZED SYMPLECTIC AUTOMORPHISMS OF FANO VARIETIES OF CUBIC FOURFOLDS},
    volume={58}, 
    DOI={10.1017/S001708951500004X}, 
    number={1}, 
    journal={Glasgow Mathematical Journal}, 
    author={Fu, Lie}, 
    year={2016}, 
    pages={17–37}
}

@article{LazaZheng,
  title={Automorphisms and periods of cubic fourfolds},
  author={Laza, Radu and Zheng, Zhiwei},
  journal={Mathematische Zeitschrift},
  volume={300},
  number={2},
  pages={1455--1507},
  year={2022},
  publisher={Springer}
}

@article{Matsumoto,
  title={Good reduction criterion for {K3} surfaces},
  author={Matsumoto, Yuya},
  journal={Mathematische Zeitschrift},
  volume={279},
  number={1},
  pages={241--266},
  year={2015},
  publisher={Springer}
}

@article{Zarhin,
    url = {https://doi.org/10.1515/crll.1983.341.193},
    title = {Hodge groups of {K3} surfaces.},
    author = {Yu.G. Zarhin},
    pages = {193--220},
    volume = {1983},
    number = {341},
    journal = {Journal für die reine und angewandte Mathematik},
    doi = {doi:10.1515/crll.1983.341.193},
    year = {1983},
    lastchecked = {2026-08-09}
}

@article{Wintenberger,
     author = {Wintenberger, Jean-Pierre},
     title = {Le corps des normes de certaines extensions infinies de corps locaux ; applications},
     journal = {Annales scientifiques de l'\'Ecole Normale Sup\'erieure},
     pages = {59--89},
     year = {1983},
     publisher = {Elsevier},
     volume = {4e s{\'e}rie, 16},
     number = {1},
     doi = {10.24033/asens.1440},
     mrnumber = {85e:11098},
     zbl = {0516.12015},
     language = {fr},
     url = {https://www.numdam.org/articles/10.24033/asens.1440/}
}
\bibliographystyle{plain}

\end{document}